\newtheorem{defi}{Definition}[section]
\newtheorem{satz}[defi]{Theorem}
\newtheorem{prop}[defi]{Proposition}
\newtheorem{lemma}[defi]{Lemma}
\newtheorem{bem}[defi]{Remark}
\newtheorem{folg}[defi]{Corollary}
\newcommand {\N}{\mathbb{N}} 
\newcommand {\R}{\mathbb{R}} 
\newcommand {\C}{\mathbb{C}} 
\begin{document}
\allowdisplaybreaks
\pagestyle{plain}
\title{Dynamics of $L^p$-Multiplier for $p\leq 2$ on Harmonic Manifolds of purely exponential volume growth}

\author{Oliver Brammen}
\address{Faculty of Mathematics,
Ruhr University Bochum, 44780 Bochum, Germany}
\email{oliver.brammen@rub.de}
\thanks{The author would like to thank Kingshook Biswas for presenting the question on the behaviour of $L^p$-multipliers for $p\leq 2$ and   Gerhard Knieper and Norbert Peyerimhoff for helpful discussions and encouragement.  The author is partially supported by the German Research Foundation (DFG), CRC TRR 191, Symplectic structures in geometry, algebra and dynamics.}


\begin{abstract}
We study the dynamics of $L^p$-multipliers on non-compact simply connected harmonic manifolds of purely exponential volume growth. These are linear operators on the $L^p$-spaces which behave nicely on radial functions under Fourier transformation. 
In the process we complement results from \cite{MR4443685} by showing that if they are acting nicely on smooth function with compact support under Fourier transform for $p\leq 2$ these can not be chaotic. Furthermore, we use this to study the behaviour of the heat semi-group, the resolvent and the convolution algebra arising from convolution with radial functions. In the process, we obtain a Young inequality for the convolution on non-compact simply connected harmonic manifolds, an extension of the Kunz-Stein and study the domain of holomorphicity of the Fourier transform. 
\end{abstract}


\maketitle

\section{Introduction}
A linear operator on a separable Banach space is called chaotic if it has a dense set of periodic points and has a dense orbit, or equivalently, is topological transitive (also known as is hypercyclic).
The systematic study of the chaotic behaviour of semi-groups started in 1997 with an article by Desch, Schappacher and Webb \cite{desch_schappacher_webb_1997}. The study of the heat semi-group in this context was carried on in an article of Herzog \cite{herzog1997universality} and one by DeLaubenfels, Emamirad and Gross-Erdmann \cite{delaubenfels2003chaos}.
Ji and Weber \cite{JI20101121} showed that after shifting the heat semi-group on finite volume locally symmetric spaces of rank one it is chaotic on the space of radial $L^p$-functions for $p\in(1,2).$
Furthermore, the authors showed in \cite{ji_weber_2010} that for non-compact symmetric spaces of rank one, the same holds true in the H\"older conjugated range. This result was subsequently improved by Sakar \cite{Sarkar_2013} by showing that the shifted heat semi-group of a Demek Ricci space, which includes all non-compact symmetric spaces of rank one, is chaotic on the whole $L^p$ space for $p\in(2,\infty)$. Sakar and Pramanik \cite{PRAMANIK20142867} later showed that the same holds true for symmetric spaces of higher rank. Furthermore Ji and Weber \cite{ji_weber_2015} extended their previous result on finite volume locally symmetric spaces of rank one to higher rank. Finally,  Ray and Sakar  \cite{ray2018chaotic} extended the previous results on symmetric spaces of non-compact type to a more general class of operators called $L^p$-multiplier. Furthermore, they showed that these operators can not be chaotic in the range $p\in[1,2]$. This dichotomy is largely due to the lack of eigenfunctions of the Laplacian for $p\in[1,2]$. 
The main objective of this article is to extend the results of Ray and Sakar  to non-compact harmonic manifolds of purely exponential volume growth. That $L^p$-multipliers are chaotic for $p\in (2,\infty)$ was already shown by Biswas and Sarkar in \cite{MR4443685} for non-compact harmonic manifolds of purely exponential volume growth. 
We are going to complete the discussion of $L^p$-multiplier on those manifolds by showing that they can not be chaotic in the range $p\in [1,2]$ under the assumption that they act by multiplication with a holomorphic function under Fourier transform for $p\in[1,2)$ and with a continuous function for $p=2$. This behaviour is observed in all known examples. 
Furthermore, we will discuss examples of $L^p$-multiplier in Section 4 in the form of the heat semi-group and the resolvent. 
The second objective of this article is to provide some additional information on the convolution on non-compact simply connected harmonic manifolds of purely exponential volume growth defined in \cite{Biswas2019}. This will be done in the beginning of Section 3, since the convolution with a radial function is a prime example for an $ L^p$ multiplier.
We begin this article with a brief introduction to the Fourier transform on simply connected harmonic manifolds of purely exponential volume growth. For a detailed history of the study of harmonic manifolds, we refer to the surveys  \cite{kreyssig2010introduction} and \cite{knieper22016}.
\section{Preliminaries}
\subsection{Geometric Property of Harmonic Manifolds with Purely Exponential Volume Growth.}
In this section, we recall some facts needed in the proceedings. The main source of reference for those is \cite{Biswas2019}. A Riemannian manifold $(X,g)$ is called harmonic if for every point $\sigma\in X$ there exists a non-trivial radial, i.e. only on the geodesic distance $d(\sigma,x)$ dependent, solution of the Laplace equation 
$\Delta f=0$ on a punctured neighborhood of $\sigma$, where $\Delta=\operatorname{div}\operatorname{grad}$ is the Laplacian on $X$.  
An equivalent definition of $X$ being harmonic is that the density function is radial in geodesic coordinates, and therefore in this coordinate the density function $A(r)$ is given by  Jacobian of the map sending $v\in S_{\sigma}X$ to $\operatorname{exp}(rv)$. Using this definition and assuming that $X$ is additionally non-compact and simply connected, hence admits at least one minimal ray, it follows that $X$ have no conjugate points. Therefore by the Hadamard Cartan theorem the exponential map $\operatorname{exp}: T_{\sigma} X\to X$ is a diffeomorphism for every $\sigma\in X$.\\
The prime examples of harmonic manifolds are symmetric spaces of rank one and Euclidean spaces. 
It was a long-standing conjecture that all harmonic manifolds are of this type, referred to as the Lichnerowicz conjecture \cite{Lich}. The conjecture was proven for compact simply connected spaces by Szabo\cite{szabo1990} but shortly after in 1992  Damek and Ricci \cite{Damek_1992} provided for dimension 7 and higher a class of homogeneous non-compact simply connected harmonic spaces that are non-symmetric. These manifolds are called Damek-Ricci spaces. 
Furthermore the conjecture holds true for $\operatorname{dim}X< 6$  ( see \cite{Lich}, \cite {Walker}, \cite{Besse1978} and \cite{Nik}).
A non-compact simply connected harmonic manifold $(X,g)$ is said to be of purely exponential volume growth if there exists some constant $C\geq1$ and $\rho>0$ such that:
\begin{align*}
\frac{1}{C}\leq \frac{A(r)}{e^{2\rho r}}\leq C.
\end{align*}
This property is by \cite{knieper2009new} equivalent to
\begin{itemize}
\item Anosov Geodesic Flow
\item Gromov Hyperbolicity 
\item Rank one,
\end{itemize}
where the rank of a manifold without conjugate points is defined as the minimal dimension of the kernel of the symmetric operator given by the sum of the Hessian of the Busemann function in forward and backwards directions. By \cite{knieper2009new}  this definition coincides with the usual definition of rank in the setting of negative curvature (see \cite{Ballmann1985}  for the definition). 
All rank one symmetric spaces and  Damek-Ricci spaces are of this type, furthermore, Knieper in \cite{knieper2009new} also showed that non-positive curvature, or more generally no focal points, implies purely exponential volume growth. And that either of the conditions above implies the Lichnerowicz conjecture if $X$ admits a compact quotient.\\
From now on let $(X,g)$ be a non-compact simply connected harmonic manifold of rank one.
The geometric boundary $\partial X$ is defined by equivalence classes of geodesic rays. Where two rays are equivalent if their distance is bounded. 
The topology on $\partial X$ is the cone topology with the property that for $\overline{X}=X\cup\partial X$ and $B_1(x)=\{v\in T_xX\vert~\Vert v\Vert_g\leq 1\}$ the map $pr_x: B_1(x)\to \overline{X}$
\begin{align*}
pr_x (v)=\begin{cases}
\gamma_v(\infty)&\text{if}~\Vert v\Vert_g=1\\
\exp(\frac{1}{1-\Vert v \Vert}v)&\text{if}~\Vert v\Vert_g<1
\end{cases}
\end{align*}
is a homeomorphism.

 Since the geodesic flow is Anosov the Busemann function only depends on the direction of the ray. Hence 
for $x\in X$ and $\xi\in\partial X$ being the point at infinity of the  geodesic $\gamma$ we can alternatively define the  Busemann function $B_{\xi,x}:X\to\R$ by $$B_{\xi,x}(y)=\lim_{t\to\infty}(d(y,\gamma(t))-d(x,\gamma(t)).$$
Furthermore, Busemann functions are Lipschitz with Lipschitz constant $1$ and are analytic \cite{BusemannHarmonic}. Additionally, they have the properties: 
\begin{align*}
\lVert \operatorname{grad}B_{\xi,x}(y)\rVert_g=1,\quad \Delta B_{\xi,x}=2\rho,
\end{align*}
where $\rho$ is the constant above and $2\rho$ is the mean curvature of the horospheres. 

Additionally, the authors in \cite{Biswas2019} obtained a co-cycle property:

$$B_{\xi,x}=B_{\xi,\sigma}-B_{\xi,\sigma}(x)\quad \forall x,\sigma\in X.$$ 

Furthermore, by pushing forward the probability measure induced by the metric $\theta_x$ on $S_x X$ under $pr_x$  we obtain a  probability measure $\mu_x$ on $\partial X$. Hence, we have a family of probability measures $\{\mu_x\}_{x\in X}$, that are pairwise absolutely continuous with Radon-Nikodym derivative  $\frac{d\mu_x}{d\mu_y}(\xi)=e^{-2\rho B_{\xi,x}(y)}$. For a detailed proof see \cite{Knieper2016}.

Let $f$ be a $C^2$ function on $X$  and $u$ a $C^{\infty}$ function on $\R$. Then we have:
$$\Delta (u\circ f)=(u''\circ f)\lVert \operatorname{grad} f\rVert_g^2 +(u'\circ f)\Delta f.$$
(See for instance \cite[Lemma 3.1]{Biswas2019}.)

With this, we can calculate the spherical and horospherical part of the Laplacian, by
choosing $f=d_x$ for some $x\in X$. We obtain with $\Delta d_x(r)=\frac{A^{\prime}(r)}{A(r)}\circ d_x(r)$ using spherical coordinates around $x$
\begin{align}\label{equ:radiallaplace}
\Delta(u\circ d_x)=u^{\prime\prime}\circ d_x +u^{\prime}\circ d_x\cdot \frac{A^{\prime}}{A}\circ d_x.
\end{align}
For the Busemann function  $f=B_{\xi,x}$ with $\Delta b_v=h$ we obtain using horospherical coordinates 
\begin{align}
\Delta( u\circ B_{\xi,x})=u^{\prime\prime}\circ B_{\xi,x}+2\rho\cdot u^{\prime}\circ B_{\xi,x}.
\end{align}
Therefore we have:
 $g(y)=e^{(i\lambda-\rho)B_{\xi, x}(y)}$ is a eigenfunction of the Laplacian with $g(x)=1$ and $\Delta g= -(\lambda^2+\rho^2)$ for $\lambda\in \C$.
 

\subsection{Fourier Transform and Plancherel Theorem on Rank One Harmonic Manifolds}
The main tool in defining the Fourier transform on  harmonic manifolds of purely exponential volume growth is the theory of hypergroups or more generally the study of operator given by the radial part of the Laplacian. This was first presented for harmonic manifolds with pinched negative curvature in \cite{biswas2018fourier} and then extended in \cite{Biswas2019} to rank one harmonic manifold.

Since we refrain from details, we refer the reader 
 to \cite{bloom2011harmonic} for a thorough discussion of the topic and the definition. In \cite{Biswas2019} the authors showed that the density function $A(r)$ of a harmonic manifold of rank one  satisfies the following conditions   %
\begin{itemize}
\item[(C1)] $A$ is increasing and $A(r)\to\infty$ for $r\to \infty$.
\item[(C2)] $\frac{A^\prime}{A}$ is decreasing and $\rho=\frac{1}{2} \lim\limits_{r\to\infty} \frac{A^\prime(r)}{A(r)}>0$.
\item[(C3)] For $r>0$, $A(r)=r^{2\alpha +1 }B(r)$ for some $\alpha>-\frac{1}{2}$ and some even $C^{\infty}$ function $B(x)$ on $\R$ with $B(0)=1$.
\item[(C4)] 
\begin{align*}
G(x)=\frac{1}{4}\Bigl(\frac{A^\prime}{A}(r)\Bigr)^2 +\frac{1}{2}\Bigl(\frac{A^\prime}{A}(r)\Bigr)^\prime -\rho^2
\end{align*}\label{C42}
is bounded on $[r_0,\infty)$ for all $r_0>0$  and 
    \begin{align*}
\int_{r_1}^\infty r\vert G(r)\vert \,dr<\infty\quad\text{for some}~r_1 >0.
\end{align*}
\end{itemize}
Therefore $A(r)$ defines a   Ch\'ebli-Trim\'eche hypergoup. The structure of the so-defined hypergroup is related to the second-order differential operator given by the radial part of the Laplacian: 
\begin{align}\label{eq:A}
 L_{A}:=\frac{d^2}{dr^2}+\frac{A^{\prime}(r)}{A(r)}\frac{d}{dr}.
\end{align}
Let
\begin{align}
\varphi_{\lambda}:\R^+\to\R, \quad\lambda\in [0,\infty)\cup [0,i\rho]
\end{align}
be the eigenfunction of $L_A$ with
\begin{align}
L_{A}\varphi_{\lambda}=-(\lambda^2+\rho^2)\varphi_\lambda
\end{align}
and which admit a smooth extension to zero with $\varphi_{\lambda}(0)=1$.
 Under  conditions C1-C4 it was shown in \cite{Bloom1995TheHM}  that there is a complex function $\mathbf{c}$ on $\C\setminus \{0\}$. Such that 
for the two linear independent solutions to $L_{A,x}u=-(\lambda^2+\rho^2)$ 
$\Phi_{\lambda}$ and $\Phi_{-\lambda}$ which are asymptotic to exponential functions, meaning:
\begin{align}
\Phi_{\pm \lambda}(r)= e^{(\pm i\lambda-\rho)r}(1+o(1))\text{ as }r\to\infty,
\end{align}
we have
\begin{align}\label{eigende}
\varphi_{\lambda}=\mathbf{c}(\lambda)\Phi_{\lambda}+\mathbf{c}(-\lambda)\Phi_{-\lambda}\quad\forall \lambda\in \C\setminus \{0\}.
\end{align}
The authors in \cite{Biswas2019} then defined the radial Fourier transform by: 
\begin{defi}
Let $f:X\to\R$ be, i.e. $f=u\circ d_x$ for some $x\in X$, where $u:[0,\infty)\to\R$ and $d_x:X\to \R$ is the distance function. The Fourier transform of $f$ is given by: 
$$\widehat{f}(\lambda):=\omega_{n-1}\int_0^{\infty}u(r)\varphi_{\lambda}(r)A(r)\,dr,$$
\end{defi}
where $\omega_{n-1}$ is the volume of $S^{n-1}\subset\R^n$. 
Note that in the following we will omit to mention the base point $x$ unless there is the possibility of confusion. By $f$ radial, we mean that  $f$ is radial around some point $\sigma\in X$. Further, we will use this point as a base point for the radial Fourier transform unless stated otherwise. 
Now observe that we obtain the radial eigenfunctions of the Laplace operator with eigenvalue $-(\lambda^2+\rho^2)$ by:
\begin{align}\label{defi:ef}
\varphi_{\lambda,x}(y)=\varphi_{\lambda}\circ d(x,y)\quad \forall x,y\in X.
\end{align}
With \cite{Bloom1995TheHM}  the authors in \cite{Biswas2019} showed that there is a constant $C_0$ such that 
for $f\in L^1(X)$ radial, i.e. $f=u\circ d_x$ for some $x\in X$ and $u:[0,\infty)\to\R$ such that $\widehat{u}\in L^1((0,\infty),C_0\lvert \mathbf{c}(\lambda)\rvert^{-2}\,d\lambda)$.
\begin{align}\label{fourad}
f(y)=C_0\int_{0}^\infty \widehat{f}(\lambda)\varphi_{\lambda,x}(y)\vert \mathbf{c}(\lambda) \vert^{-2}\, d\lambda.
\end{align}
And the radial Fourier transform extends to an isometry between the radial $L^2$ functions denoted by $L_\sigma^2(X)$ and $$L^2((0,\infty),C_0\lvert \mathbf{c}(\lambda)\rvert^{-2}\,d\lambda).$$
\begin{bem}
More precise the authors in \cite{Bloom1995TheHM} showed under the additional condition that $\lvert \alpha \rvert>\frac{1}{2}$ that $\mathbf{c}$ dose does not have zeros on the closed lower half plane. Hence this would exclude the case $\operatorname{dim} X=3$ (see \cite{Biswas2019}) but the Lichnerowicz conjecture is affirmed in $\operatorname{dim} X<6$ (see  \cite{Lich}, \cite {Walker}, \cite{Besse1978}, \cite{Nik}) and therefore the Jacobin analysis applies, and we can use the $\mathbf{c}$-function obtained in this context.
\end{bem}
\begin{bem}
Using the work of Chebli and the definition of the Fourier transform on harmonic manifolds it follows that:
$$C_{0}=\frac{1}{2\pi \omega_{n-1}}.$$
This clarification is due to a talk given by Gerhard Knieper at the International Conference on 
Einstein and Harmonic Manifolds on the 21st of June 2023. 
\end{bem}

 In the same fashion as in the case of the Halgason Fourier transform on symmetric spaces the authors in \cite{Biswas2019} extended the Fourier transform to non-radial functions. Namely, by using the radial symmetry of the Poisson kernel.
\begin{defi}
		Let $\sigma\in X$ for $f:X\to \C$ measurable, the Fourier transform of $f$ based at $\sigma$ is given by 		
		$$\tilde{f}^{\sigma}(\lambda,\xi)=\int_{X}f(y)e^{(-i\lambda-\rho)B_{\xi,\sigma}(y)}\,dy$$
		for $\lambda\in\C$, $\xi\in\partial X$ for which the integral above converges and  $B_{\xi,\sigma}$ the Busemann function in direction  $\xi$ based at $\sigma$, i.e.
			$$B_{\xi,\sigma}(\sigma)=0.$$
\end{defi}
		We can immediately note that because of the co-cycle property the Fourier transforms with respect to different base points $x,\sigma\in X$ are related by: 
		
		\begin{align}\label{eq:fourpoint}
			\tilde{f}^x (\lambda,\xi)=e^{(i\lambda+\rho)B_{\xi,\sigma}(x)}\tilde{f}^{\sigma}(\lambda,\xi).
			\end{align}
	This can be seen by the following calculation:
			Let $x,\sigma\in X$ and $f\in C^{\infty}_c(X)$ then we have for $\lambda\in \C$ and $\xi\in \partial X$ that:
		\begin{align*}
		\tilde{f}^x (\lambda,\xi)&=\int_Xf(y)e^{(-i\lambda-\rho)B_{\xi,x}(y)}\,dy\\
		&=\int_Xf(y)e^{(-i\lambda-\rho)B_{\xi,\sigma}(y)}\cdot e^{(i\lambda+\rho)B_{\xi,\sigma}(x)}\,dy\\
		&=e^{(i\lambda+\rho)B_{\xi,\sigma}(x)}\int_Xf(y)e^{(-i\lambda-\rho)B_{\xi,\sigma}(y)}\,dy\\
		&=e^{(i\lambda+\rho)B_{\xi,\sigma}(x)}\tilde{f}^{\sigma}(\lambda,\xi).
		\end{align*}
		Furthermore, the Fourier transform coincides with the radial Fourier transform on radial functions. For details see \cite[Lemma 5.2]{Biswas2019}. 	
	The inversion formula follows now from the representation of the radial eigenfunctions via a convex combination of non-radial eigenfunctions, \cite[Theorem 5.6]{Biswas2019},:
			\begin{align}\label{radialeigen}
			\varphi_{\lambda,x}(y):=\int_{\partial X}e^{(i\lambda-\rho)B_{\xi,x}(y)}\,d\mu_x (\xi).
		\end{align}
This is analogous to the well-known formula on a rank one symmetric space $G/K$ and harmonic $NA$ groups see for the symmetric case \cite[Chapter III, Section 11]{helgason1994geometric} and for the harmonic $NA$ group \cite{Damek1992} and \cite{fourierNA2}.
		Using this the authors in \cite{Biswas2019} obtained:
			\begin{align*}
			f(x)=C_0 \int_{0}^{\infty}\int_{\partial X}\tilde{f}^{\sigma}(\lambda,\xi)e^{(i\lambda-\rho)B_{\xi,\sigma}(x)}\,d\mu_{\sigma}(\xi)
			\vert \mathbf{c}(\lambda)\vert^{-2}\,d\lambda
			\end{align*}
		where $C_0$ is the same constant given in (\ref{fourad}).
		Additionally, they also obtained a Plancherel theorem:
\begin{satz}[\cite{Biswas2019}]\label{Plancherel Theorem}
		Let $\sigma\in X$, $f,g\in C_{c}^{\infty}(X)$. Then we have:
			$$\int_X f(x)\overline{g(x)}\,dx=C_{0}\int_{0}^{\infty}\int_{\partial X}\tilde{f}^{\sigma}(\lambda,\xi)\overline{\tilde{g}
			^{\sigma}(\lambda,\xi)} \vert \mathbf{c}(\lambda)\vert^{-2}\,d\mu_{\sigma}(\xi)d\lambda 
			$$ 
		and the Fourier transform extends to an isometry between $L^2(X)$ and\\
		 $L^2((0,\infty)\times \partial X,C_0 \vert \mathbf{c}(\lambda)\vert^{-2} \,d\mu_{\sigma}(\xi)\,d\lambda).$
\end{satz}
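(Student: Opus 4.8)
The plan is to deduce the Plancherel identity on $C_{c}^{\infty}(X)$ directly from the inversion formula of \cite{Biswas2019} quoted just above, by an application of Fubini's theorem, and then to extend it to an isometry by density; the surjectivity of that isometry is the only genuinely delicate point.

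For the identity, fix $\sigma\in X$ and $f,g\in C_{c}^{\infty}(X)$, substitute
$$f(x)=C_0\int_{0}^{\infty}\int_{\partial X}\tilde f^{\sigma}(\lambda,\xi)\,e^{(i\lambda-\rho)B_{\xi,\sigma}(x)}\,d\mu_{\sigma}(\xi)\,\lvert\mathbf{c}(\lambda)\rvert^{-2}\,d\lambda$$
into $\int_X f(x)\overline{g(x)}\,dx$, and interchange the $x$-integration with the integrations in $\lambda$ and $\xi$. This interchange is the first point needing care, since $\lvert\mathbf{c}(\lambda)\rvert^{-2}\,d\lambda$ has only polynomial growth; it is justified by the compactness of $\supp g$ together with a Paley--Wiener estimate. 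Indeed, since $y\mapsto e^{(-i\lambda-\rho)B_{\xi,\sigma}(y)}$ is an eigenfunction of $\Delta$ with eigenvalue $-(\lambda^2+\rho^2)$, integrating by parts $N$ times gives $(\lambda^2+\rho^2)^N\tilde f^{\sigma}(\lambda,\xi)=(-1)^N\widetilde{\Delta^N f}^{\,\sigma}(\lambda,\xi)$, and since $B_{\xi,\sigma}$ is $1$-Lipschitz with $B_{\xi,\sigma}(\sigma)=0$ it is bounded on $\supp f$, so $\lvert\tilde f^{\sigma}(\lambda,\xi)\rvert\le C_N(1+\lambda^2)^{-N}$ for real $\lambda$, uniformly in $\xi\in\partial X$. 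After interchanging, the inner integral becomes
$$\int_X\overline{g(x)}\,e^{(i\lambda-\rho)B_{\xi,\sigma}(x)}\,dx=\overline{\int_X g(x)\,e^{(-i\lambda-\rho)B_{\xi,\sigma}(x)}\,dx}=\overline{\tilde g^{\sigma}(\lambda,\xi)},$$
because $\lambda$ is real and $B_{\xi,\sigma}$ real-valued, which is exactly the asserted identity. Taking $f=g$ shows that $f\mapsto\tilde f^{\sigma}$ is isometric from $(C_{c}^{\infty}(X),\lVert\cdot\rVert_{L^2})$ into $H:=L^2((0,\infty)\times\partial X,\,C_0\lvert\mathbf{c}(\lambda)\rvert^{-2}\,d\mu_{\sigma}(\xi)\,d\lambda)$; by density of $C_{c}^{\infty}(X)$ in $L^2(X)$ it extends to an isometry of $L^2(X)$ into $H$, and the Plancherel identity then holds on all of $L^2(X)$ by continuity.

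It remains to prove surjectivity. The image $W$ of the extended map is closed in $H$, so it suffices to show $W^{\perp}=\{0\}$. For $\psi\in W^{\perp}$ chosen first from the dense class of bounded functions with compact support in $(0,\infty)\times\partial X$, substituting the definition of $\tilde f^{\sigma}$ into $\langle\tilde f^{\sigma},\psi\rangle_H=0$ and applying Fubini reduces the claim to injectivity, on that class, of the dual transform $(\mathcal{P}\psi)(x):=C_0\int_{0}^{\infty}\int_{\partial X}\psi(\lambda,\xi)\,e^{(i\lambda-\rho)B_{\xi,\sigma}(x)}\,d\mu_{\sigma}(\xi)\,\lvert\mathbf{c}(\lambda)\rvert^{-2}\,d\lambda$. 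Here I would exploit the mean-value property of harmonic manifolds: the spherical-mean operator commutes with $\Delta$, so for each fixed $\lambda$ the eigenfunction $F_\lambda(x):=\int_{\partial X}\psi(\lambda,\xi)e^{(i\lambda-\rho)B_{\xi,\sigma}(x)}\,d\mu_{\sigma}(\xi)$ satisfies, for every $x_0\in X$ and $r>0$, that its average over the geodesic sphere of radius $r$ about $x_0$ equals $F_\lambda(x_0)\,\varphi_\lambda(r)$. Averaging $\mathcal{P}\psi\equiv0$ over this sphere and invoking the injectivity of the radial Fourier transform (the Ch\'ebli--Trim\'eche hypergroup Plancherel theorem of \cite{Bloom1995TheHM}) forces $F_\lambda(x_0)=0$ for almost every $\lambda$ and, by continuity of $F_\lambda$ in $x_0$ and separability of $X$, for every $x_0$; since the Poisson kernels $\{\xi\mapsto e^{(i\lambda-\rho)B_{\xi,\sigma}(x_0)}:x_0\in X\}$ are total in $L^2(\partial X,\mu_{\sigma})$, this yields $\psi=0$. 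I expect this last propagation --- from vanishing of all spherical means to vanishing of $\psi$ on $(0,\infty)\times\partial X$ --- to be the main obstacle, since harmonic manifolds of purely exponential volume growth need not be homogeneous, so there is no group acting on $\partial X$ to linearize the argument, and one must work directly with the measures $\mu_x$ and their Radon--Nikodym cocycle $\frac{d\mu_x}{d\mu_y}(\xi)=e^{-2\rho B_{\xi,x}(y)}$.
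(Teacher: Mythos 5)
Note first that the paper does not prove this theorem: it is imported verbatim from \cite{Biswas2019}, so the comparison is with the argument in that source. Your first two steps are correct and are essentially the standard route. The decay estimate $(\lambda^2+\rho^2)^N\tilde f^{\sigma}(\lambda,\xi)=(-1)^N\widetilde{\Delta^N f}^{\sigma}(\lambda,\xi)$ via Green's identity (no boundary terms by compact support), the uniformity in $\xi$ from the Lipschitz bound on $B_{\xi,\sigma}$, and the polynomial growth of $\lvert\mathbf{c}(\lambda)\rvert^{-2}$ under (C1)--(C4) do justify the Fubini interchange, and the inner integral computation using that $\lambda$ and $B_{\xi,\sigma}$ are real is correct; the isometric extension to $L^2(X)$ by density is then routine. (The source in fact reaches the same identity slightly differently, through the relation $\int_{\partial X}\lvert\tilde f^{\sigma}(\lambda,\xi)\rvert^2\,d\mu_{\sigma}(\xi)=\int_X\overline{f}\,(f*\varphi_{\lambda,\sigma})$ --- the computation reproduced in Lemma \ref{lemma:holo} of this paper --- combined with the radial inversion formula, but this is a cosmetic difference.)

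The surjectivity part, however, has two genuine gaps. First, a logical one: you take $\psi\in W^{\perp}$ ``chosen first from the dense class of bounded functions with compact support,'' but a nontrivial closed subspace $W^{\perp}$ need not contain \emph{any} nonzero element of a given dense class (a one-dimensional subspace spanned by a vector of full support meets the finitely supported sequences in $\ell^2$ only in $0$), so showing that no such $\psi$ lies in $W^{\perp}$ does not give $W^{\perp}=\{0\}$; and for general $\psi\in W^{\perp}$ your Fubini interchange fails because $\int_0^{\infty}\lvert\mathbf{c}(\lambda)\rvert^{-2}\,d\lambda=\infty$. Second, and more seriously, the final step rests on the totality of the Poisson kernels $\{\xi\mapsto e^{(i\lambda-\rho)B_{\xi,\sigma}(x)}:x\in X\}$ in $L^2(\partial X,\mu_{\sigma})$, i.e.\ injectivity of the Poisson transform at (almost every) real $\lambda$. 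You correctly identify this as the main obstacle, but it is asserted rather than proven, and in the non-homogeneous setting it is a substantial theorem in its own right (it amounts to recovering boundary values from the asymptotics of eigenfunctions, with the $\mathbf{c}$-function entering as the normalizing factor); nothing quoted in this paper supplies it. The preceding reduction --- using the mean-value property of eigenfunctions on harmonic manifolds and the one-dimensional Sturm--Liouville/hypergroup Plancherel theorem of \cite{Bloom1995TheHM} to get $F_{\lambda}(x_0)=0$ for a.e.\ $\lambda$ and all $x_0$ --- is a sound idea, but without the Poisson-transform injectivity the surjectivity claim, which is the only part of the theorem going beyond an isometric embedding, remains unproved.
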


\section{Main Results}
\subsection{The Convolution}\label{Covolution}
Assume $(X,g)$ to be a non-compact simply connected harmonic manifold of rank one. 
 Let $f$ be radial around $\sigma\in X$ with $f=u\circ d_{\sigma}$ for some function $u:\R\to \R$. For $x\in X$ define the $x$-translate of $f$ by 
\begin{align}
\tau_x f :=u\circ d_x.
\end{align}
Observe that 

\begin{enumerate}
\item $\tau_x f(y)=u\circ d(x,y)=\tau_{y}f(x)$.
\item If $f\in L^1 (X)$ is radial  we have $\lVert f \rVert_1=\omega_{n-1}\int_{0}^{\infty} \lvert u(r)\rvert A(r)\,dr=\lVert \tau_x f \rVert_1$.
\item  If $f\in L^2 (X)$ is radial  we have $\lVert f \rVert_2=\omega_{n-1}\int_{0}^{\infty} \lvert u(r)\rvert^2 A(r)\,dr=\lVert \tau_x f \rVert_2$.
\end{enumerate}
\begin{defi}
Let $g:X\to \C$ be a radial function  around $\sigma\in X$, with $g=u\circ d_\sigma$ for some function $u:\R_{\geq 0}\to \C$. 
 Then the convolution of a function $f:X\to \C$ with compact support with $g$
   is given by $$ (f*g)(x):=\int_{X} f(y)\cdot\tau_x g(y)\,dy.$$
\end{defi}
Now we need to check that the integral above is defined for almost every $x\in X$, if $g$ and $f$ are in $L^1(X)$.  For that purpose let $g=u\circ d_{\sigma}$. Then we have:
\begin{align*}
\lVert f*g\rVert_1&\leq \int_X\int_X\lvert f(y)\rvert\lvert(\tau_xg)(y)\rvert \,dy\,dx\\
&=\omega_{n-1}\int_X\lvert f(y)\rvert\Bigl(\int_{0}^{\infty}\lvert u(r)\rvert A(r) \,dr\Bigr) \,dy\\
&=\lVert f\rVert_1\lVert g\rVert_1< \infty. 
\end{align*}
Moreover, by replacing $f\in L^1(X)$ by $f\in L^{\infty}(X)$ in the calculation above we obtain:
\begin{align*}
\lVert f*g\rVert_{\infty}\leq \lVert f\rVert_{\infty}\cdot \lVert g\rVert_1.
\end{align*}
Hence, by the Riesz-Thorin interpolation theorem (see for instance \cite[Chapter 4]{bennett1988interpolation}), we get for 

 for all $ p\in[1,\infty]$ and $f\in L^p(X),$
\begin{align}\label{eq:conalg}
\lVert f*g\rVert_p\leq \lVert f\rVert_{p}\cdot\lVert g\rVert_1.
\end{align}

\begin{lemma}[\cite{MR4443685}]\label{id}
Let $f\in L^p(X)$ and $\psi_n$ be a radial approximation of the identity such that $\psi_n\geq 0$, $\int_X\psi_n (x) \,\,dx=1$
and
$$\lim_{n\to\infty}\int_{B(\sigma,r)}\psi_n (x) \,dx=1\quad \forall r>0.$$
Then $\lVert f*\psi_n-f\rVert_p\to 0,\text{ for } n\to \infty.$
\end{lemma}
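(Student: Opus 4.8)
\emph{Proof idea.} The plan is the classical three-$\varepsilon$ argument: reduce to $h\in C_c(X)$, prove the claim there by a uniform-continuity estimate, and glue the two steps together with the uniform bound \eqref{eq:conalg}. Since $\lVert\psi_n\rVert_1=\int_X\psi_n=1$, \eqref{eq:conalg} gives $\lVert u*\psi_n\rVert_p\le\lVert u\rVert_p$ for every $u\in L^p(X)$ and every $n$. As $C_c(X)$ is dense in $L^p(X)$ (for the relevant range $p\in[1,\infty)$), given $f\in L^p(X)$ and $\varepsilon>0$ I would pick $h\in C_c(X)$ with $\lVert f-h\rVert_p<\varepsilon$ and write
\[
\lVert f*\psi_n-f\rVert_p\le\lVert(f-h)*\psi_n\rVert_p+\lVert h*\psi_n-h\rVert_p+\lVert h-f\rVert_p\le 2\varepsilon+\lVert h*\psi_n-h\rVert_p,
\]
so that it suffices to treat $h\in C_c(X)$.

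For such an $h$, with $\supp h\subset\overline{B(\sigma,R)}$, I would use that translation preserves the integral of a radial function (property (2) above), so $\int_X\tau_x\psi_n(y)\,dy=1$ for every $x$, whence $(h*\psi_n)(x)-h(x)=\int_X\bigl(h(y)-h(x)\bigr)\tau_x\psi_n(y)\,dy$. Splitting this integral over $B(x,\delta)$ and its complement, on $B(x,\delta)$ I bound $\lvert h(y)-h(x)\rvert$ by the modulus of continuity $\omega_h(\delta)$ of $h$, and on the complement by $2\lVert h\rVert_\infty$ times $\int_{X\setminus B(x,\delta)}\tau_x\psi_n=1-\int_{B(\sigma,\delta)}\psi_n=:\eta_n(\delta)$, where I use that the integral of a radial function over a metric ball is independent of its centre (the density $A$ is the same in geodesic polar coordinates around any point, $X$ being harmonic without conjugate points) together with the hypothesis $\eta_n(\delta)\to0$. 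This yields the pointwise estimate $\lvert(h*\psi_n)(x)-h(x)\rvert\le\omega_h(\delta)+2\lVert h\rVert_\infty\eta_n(\delta)$, valid for all $x$.

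The step I expect to be the real obstacle is passing from this pointwise estimate to an $L^p$ estimate, since $X$ has infinite volume and $\psi_n$ need not be compactly supported, so $h*\psi_n$ may spread out over all of $X$. To deal with this I would split $X=B(\sigma,R+1)\cup\bigl(X\setminus B(\sigma,R+1)\bigr)$. On $B(\sigma,R+1)$, which has a \emph{fixed} finite volume $V$, integrating the pointwise bound contributes $V^{1/p}\bigl(\omega_h(\delta)+2\lVert h\rVert_\infty\eta_n(\delta)\bigr)$ to $\lVert h*\psi_n-h\rVert_p$. On $X\setminus B(\sigma,R+1)$ we have $h\equiv0$, and since $d(x,y)\ge1$ whenever $x\notin B(\sigma,R+1)$ and $y\in\supp h$, one checks that there $(h*\psi_n)(x)=(h*\widetilde\psi_n)(x)$ with $\widetilde\psi_n:=\psi_n\cdot\mathbf 1_{\{d(\sigma,\cdot)\ge1\}}$, a radial non-negative $L^1$ function of total mass $\eta_n(1)$; applying \eqref{eq:conalg} to $\widetilde\psi_n$ bounds this piece in $L^p$ by $\lVert h\rVert_p\,\eta_n(1)$. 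Altogether $\lVert h*\psi_n-h\rVert_p\le V^{1/p}\bigl(\omega_h(\delta)+2\lVert h\rVert_\infty\eta_n(\delta)\bigr)+\lVert h\rVert_p\,\eta_n(1)$. Finally I would first choose $\delta$ small enough that $V^{1/p}\omega_h(\delta)<\varepsilon$ (possible since $V$ is now fixed and $\omega_h(\delta)\to0$), and then let $n\to\infty$ so that $\eta_n(\delta),\eta_n(1)\to0$, obtaining $\limsup_n\lVert h*\psi_n-h\rVert_p\le\varepsilon$; as $\varepsilon$ was arbitrary this gives $\lVert h*\psi_n-h\rVert_p\to0$, which together with the first paragraph proves the lemma.
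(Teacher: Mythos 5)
Your argument is correct and complete. The paper itself gives no proof of Lemma \ref{id} (it is imported from \cite{MR4443685}), so there is nothing to compare against line by line, but what you have written is the standard and sound route: reduce to $h\in C_c(X)$ via density and the uniform bound $\lVert u*\psi_n\rVert_p\le\lVert u\rVert_p$ from (\ref{eq:conalg}), get a pointwise estimate from uniform continuity using that $\int_X\tau_x\psi_n=1$ and that $\int_{B(x,\delta)}\tau_x\psi_n=\int_{B(\sigma,\delta)}\psi_n$ (both consequences of the radial density $A$ being base-point independent on a harmonic manifold without conjugate points), and then handle the infinite-volume issue by splitting off the tail $X\setminus B(\sigma,R+1)$, where your observation that $h*\psi_n=h*\widetilde\psi_n$ with $\lVert\widetilde\psi_n\rVert_1=\eta_n(1)\to0$ is exactly the right fix; this last step is the one most often glossed over and you have it right. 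The only caveat, which you already flag, is that the statement must be read with $p\in[1,\infty)$, since for $p=\infty$ both the density of $C_c(X)$ and the conclusion fail in general.
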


\begin{lemma}[\cite{Biswas2019}]\label{f+g}
Let $f:X\to\C$ be smooth with compact support and $g:X\to\C$ smooth with compact support and radial around $\sigma\in X$. Then we have:
$$\widetilde{f*g}^{\sigma}=\tilde{f}^\sigma\cdot \widehat{g}.$$
\end{lemma}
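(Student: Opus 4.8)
The plan is to compute the Fourier transform $\widetilde{f*g}^{\sigma}$ directly from the definitions, unfolding the convolution integral and the definition of $\tilde{\cdot}^{\sigma}$, and then to use Fubini together with the radial symmetry encoded in the $x$-translate. Concretely, I would start from
\begin{align*}
\widetilde{f*g}^{\sigma}(\lambda,\xi)=\int_X (f*g)(x)\,e^{(-i\lambda-\rho)B_{\xi,\sigma}(x)}\,dx
=\int_X\int_X f(y)\,\tau_x g(y)\,e^{(-i\lambda-\rho)B_{\xi,\sigma}(x)}\,dy\,dx.
\end{align*}
Since $f$ and $g$ are smooth with compact support, the double integral is absolutely convergent, so Fubini applies and I may integrate in $x$ first. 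Using $\tau_x g(y)=u\circ d(x,y)=\tau_y g(x)$ (property (1) of the translate), the inner integral becomes $\int_X \tau_y g(x)\,e^{(-i\lambda-\rho)B_{\xi,\sigma}(x)}\,dx$, which is exactly $\widetilde{\tau_y g}^{\sigma}(\lambda,\xi)$.

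The key step is then to identify $\widetilde{\tau_y g}^{\sigma}(\lambda,\xi)$. Because $\tau_y g$ is radial around $y$, its Fourier transform based at $y$ coincides with the radial Fourier transform, hence $\widetilde{\tau_y g}^{y}(\lambda,\xi)=\widehat{g}(\lambda)$, independent of $\xi$ (this uses \cite[Lemma 5.2]{Biswas2019}, that the two Fourier transforms agree on radial functions, together with $\widehat{\tau_y g}=\widehat{g}$ from translation-invariance of the radial transform). Now I change base point via the relation \eqref{eq:fourpoint}: for the function $\tau_y g$,
\begin{align*}
\widetilde{\tau_y g}^{\sigma}(\lambda,\xi)=e^{(-i\lambda-\rho)B_{\xi,\sigma}(y)}\,\widetilde{\tau_y g}^{y}(\lambda,\xi)=e^{(-i\lambda-\rho)B_{\xi,\sigma}(y)}\,\widehat{g}(\lambda),
\end{align*}
where I have inverted \eqref{eq:fourpoint} (which is symmetric in $x,\sigma$ up to sign in the exponent) to express the transform based at $\sigma$ in terms of the one based at $y$. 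Substituting this back and using the co-cycle property to keep the Busemann exponents consistent gives
\begin{align*}
\widetilde{f*g}^{\sigma}(\lambda,\xi)=\int_X f(y)\,e^{(-i\lambda-\rho)B_{\xi,\sigma}(y)}\,\widehat{g}(\lambda)\,dy=\widehat{g}(\lambda)\int_X f(y)\,e^{(-i\lambda-\rho)B_{\xi,\sigma}(y)}\,dy=\tilde{f}^{\sigma}(\lambda,\xi)\,\widehat{g}(\lambda),
\end{align*}
which is the claim.

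The main obstacle is the careful bookkeeping in the base-point change: one must verify that $\widetilde{\tau_y g}^{y}$ really is $\widehat{g}$ with no extra factor (i.e. that translating a radial function and re-basing the Fourier transform at the new center is transparent), and that the exponential factor produced by \eqref{eq:fourpoint} is precisely $e^{(-i\lambda-\rho)B_{\xi,\sigma}(y)}$ with the correct sign, so that it recombines with the defining integral of $\tilde{f}^{\sigma}$ rather than against it. The analytic point — justifying Fubini — is routine here because everything has compact support and the eigenfunctions $e^{(-i\lambda-\rho)B_{\xi,\sigma}(\cdot)}$ are bounded on compacta uniformly in $\xi$. Since this lemma is attributed to \cite{Biswas2019}, it is also legitimate to simply cite it; the sketch above indicates the argument for completeness.
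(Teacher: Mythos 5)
Your argument is correct and is the standard proof of this identity: unfold the convolution, apply Fubini, use $\tau_x g(y)=\tau_y g(x)$ to recognize the inner integral as $\widetilde{\tau_y g}^{\sigma}(\lambda,\xi)$, identify $\widetilde{\tau_y g}^{y}(\lambda,\xi)=\widehat{g}(\lambda)$ via the agreement of the two Fourier transforms on radial functions, and change base point with (\ref{eq:fourpoint}), whose inversion you carry out with the correct sign. The paper itself imports this lemma from \cite{Biswas2019} without reproving it, and your computation is exactly the argument given there, so there is nothing to contrast.
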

\begin{bem}\label{Bem2.4.5}
From the above we also obtain that if for $f,g:X\to\C$ the Fourier transform exists for almost every $\xi\in \partial X$ and $\lambda\in \R$ and if $g$ is radial then we have 
$$\widetilde{f*g}^{\sigma}=\tilde{f}^\sigma\cdot \hat{g}$$
almost everywhere. 
\end{bem}
\begin{folg}\label{folg:conv}
By the Plancherel Theorem for the Fourier transform we get
$$ \lVert f*g\rVert_2 =\lVert \tilde{f}^\sigma\cdot \hat{g}\rVert_2.$$
for suitable functions $f,g$. 
\end{folg}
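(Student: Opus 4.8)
The plan is to read off the claim from two results already established: the multiplicativity of the Fourier transform under convolution (Lemma~\ref{f+g}, together with its almost-everywhere extension Remark~\ref{Bem2.4.5}) and the Plancherel isometry of Theorem~\ref{Plancherel Theorem}. So the argument is essentially a composition of these two facts, with the only real work being to specify what ``suitable functions'' should mean.

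First I would fix the admissible class: let $g$ be radial around $\sigma$ with $g\in L^1(X)$, and let $f\in L^2(X)$. By the interpolation bound \eqref{eq:conalg} with $p=2$ we have $f*g\in L^2(X)$, so the $L^2$-Plancherel theorem applies to $f*g$ itself (recall Theorem~\ref{Plancherel Theorem} states that the Fourier transform extends to an isometry of all of $L^2(X)$ onto $L^2((0,\infty)\times\partial X,\,C_0\lvert\mathbf{c}(\lambda)\rvert^{-2}\,d\mu_\sigma(\xi)\,d\lambda)$). Next I would identify $\widetilde{f*g}^{\sigma}$: for $f,g\in C^\infty_c(X)$ with $g$ radial this is exactly Lemma~\ref{f+g}, namely $\widetilde{f*g}^{\sigma}=\tilde f^{\sigma}\cdot\widehat g$ (and in this case $f*g$ is even in $C_c(X)$, since $\tau_xg$ is supported in a ball of fixed radius about $x$); for general $f\in L^2(X)$ and $g\in L^1(X)$ radial one invokes Remark~\ref{Bem2.4.5}, which gives the same identity almost everywhere once one knows $\tilde f^{\sigma}(\lambda,\xi)$ exists for a.e.\ $(\lambda,\xi)$ and $\widehat g(\lambda)$ for a.e.\ $\lambda$. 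Then applying the Plancherel isometry to $f*g$ yields
$$\lVert f*g\rVert_2=\lVert \widetilde{f*g}^{\sigma}\rVert_2=\lVert \tilde f^{\sigma}\cdot\widehat g\rVert_2,$$
all right-hand norms being taken in the Plancherel space; this is the assertion of Corollary~\ref{folg:conv}.

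The point that needs the most care --- rather than a genuine obstacle --- is the passage of the product formula $\widetilde{f*g}^{\sigma}=\tilde f^{\sigma}\cdot\widehat g$ beyond $C^\infty_c$, i.e.\ making precise ``suitable'': one must know both Fourier transforms are defined a.e.\ and that $f*g\in L^2(X)$, the latter being supplied by \eqref{eq:conalg}. A clean alternative that sidesteps any discussion of a.e.\ convergence of the Fourier integrals is to prove the identity first for $f\in C^\infty_c(X)$, then approximate a general $f\in L^2(X)$ in $L^2$ by $C^\infty_c$ functions, using \eqref{eq:conalg} to see that $f*g$ depends continuously on $f$ in $L^2$ and that the Fourier transform is an $L^2$-isometry on both sides; Lemma~\ref{id} can be used in the same spirit to handle approximations of $g$. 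Either route reduces the corollary to the already-proved Lemma~\ref{f+g} and Theorem~\ref{Plancherel Theorem}.
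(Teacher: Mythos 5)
Your argument is correct and is exactly the route the paper intends: the corollary is stated immediately after Lemma~\ref{f+g} and Remark~\ref{Bem2.4.5} precisely so that one composes the identity $\widetilde{f*g}^{\sigma}=\tilde f^{\sigma}\cdot\widehat g$ with the Plancherel isometry of Theorem~\ref{Plancherel Theorem}, which is what you do. Your additional care in pinning down the class of ``suitable functions'' (e.g.\ $f\in C^\infty_c(X)$ or $f\in L^2(X)$ with $g\in L^1(X)$ radial, using \eqref{eq:conalg} to guarantee $f*g\in L^2(X)$) only makes explicit what the paper leaves implicit.
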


\begin{lemma}\label{lemma:convextention}
Let $g\in L_{\sigma}^2(X)$ be radial. Then the map $f\mapsto f*g$ from $C^{\infty}_c(X)$ to $L^2(X)$ admits a continuous extension to a map from $L^2(X)$ to $L^2(X)$ if and only if $\lVert \widehat{g}^{\sigma}\rVert_{\infty}< \infty$. 
\end{lemma}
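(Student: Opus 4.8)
\emph{Plan.} The statement amounts to saying that the operator norm of $f\mapsto f*g$ on $L^2(X)$ equals $\lVert\widehat{g}\rVert_\infty$, with the convention that the operator is unbounded when $\widehat{g}\notin L^\infty$. The mechanism is that under the Fourier transform $f\mapsto f*g$ is intertwined with multiplication by $\widehat{g}$ on $L^2\bigl((0,\infty)\times\partial X,\,C_0\lvert\mathbf{c}(\lambda)\rvert^{-2}\,d\mu_\sigma(\xi)\,d\lambda\bigr)$ (the content of Lemma \ref{f+g}, Remark \ref{Bem2.4.5} and Corollary \ref{folg:conv}), so the lemma reduces to the standard fact that a multiplication operator on an $L^2$ space is bounded precisely when the multiplier is essentially bounded, combined with the Plancherel Theorem \ref{Plancherel Theorem} and the fact that the radial Fourier transform is an isometric isomorphism of $L_\sigma^2(X)$ onto $L^2\bigl((0,\infty),\,C_0\lvert\mathbf{c}(\lambda)\rvert^{-2}\,d\lambda\bigr)$.

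\emph{Step 1: the convolution theorem in the required generality, and sufficiency.} First I would check that for $f\in C_c^\infty(X)$ and $g\in L_\sigma^2(X)$ radial one has $f*g\in L^2(X)$ and $\widetilde{f*g}^{\,\sigma}=\tilde{f}^{\,\sigma}\,\widehat{g}$. Pick radial $g_k\in C_c^\infty(X)$ with $g_k\to g$ in $L^2(X)$. Each $f*g_k$ lies in $C_c^\infty(X)$, its support contained in a ball fixed by the supports of $f$ and $g_k$, and $\widetilde{f*g_k}^{\,\sigma}=\tilde{f}^{\,\sigma}\,\widehat{g_k}$ by Lemma \ref{f+g}. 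Since Busemann functions are $1$-Lipschitz and vanish at $\sigma$, the weight $e^{-\rho B_{\xi,\sigma}(y)}$ is bounded uniformly for $y$ in the compact support of $f$ and all $\xi\in\partial X$, so $\tilde{f}^{\,\sigma}$ is bounded on $\R\times\partial X$; applying Corollary \ref{folg:conv} to the radial differences $g_k-g_j$ then shows $(f*g_k)_k$ is Cauchy in $L^2(X)$ with limit the pointwise convolution $f*g$, and letting $k\to\infty$ in the Fourier identity gives the claim (this uses no hypothesis on $\widehat{g}$). If now $\lVert\widehat{g}\rVert_\infty<\infty$, the Plancherel Theorem yields
$$\lVert f*g\rVert_{2}=\bigl\lVert\tilde{f}^{\,\sigma}\widehat{g}\bigr\rVert_{L^2(C_0\lvert\mathbf{c}\rvert^{-2}d\mu_\sigma d\lambda)}\le\lVert\widehat{g}\rVert_\infty\bigl\lVert\tilde{f}^{\,\sigma}\bigr\rVert_{L^2(C_0\lvert\mathbf{c}\rvert^{-2}d\mu_\sigma d\lambda)}=\lVert\widehat{g}\rVert_\infty\,\lVert f\rVert_{2}$$
for every $f\in C_c^\infty(X)$, and since $C_c^\infty(X)$ is dense in $L^2(X)$ the map $f\mapsto f*g$ extends to a bounded operator on $L^2(X)$ of norm at most $\lVert\widehat{g}\rVert_\infty$.

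\emph{Step 2: necessity.} Suppose $f\mapsto f*g$ extends to a bounded operator $T$ on $L^2(X)$ and set $M=\lVert T\rVert$. For radial $f\in C_c^\infty(X)$ the function $f*g$ is radial around $\sigma$ --- by Step 1 it is the $L^2$-limit of the $f*g_k$, and $\widetilde{f*g_k}^{\,\sigma}=\widehat{f}\,\widehat{g_k}$ depends on $\lambda$ only, hence $f*g_k$ is radial by \eqref{radialeigen} and the inversion formula --- and $\widehat{f*g}=\widehat{f}\,\widehat{g}$. The radial Plancherel identity turns $\lVert Tf\rVert_2\le M\lVert f\rVert_2$ into
$$\int_0^\infty\lvert\widehat{f}(\lambda)\rvert^2\lvert\widehat{g}(\lambda)\rvert^2\lvert\mathbf{c}(\lambda)\rvert^{-2}\,d\lambda\le M^2\int_0^\infty\lvert\widehat{f}(\lambda)\rvert^2\lvert\mathbf{c}(\lambda)\rvert^{-2}\,d\lambda .$$
Radial functions in $C_c^\infty(X)$ being dense in $L_\sigma^2(X)$ and the radial Fourier transform being an isometric isomorphism onto $L^2\bigl((0,\infty),C_0\lvert\mathbf{c}\rvert^{-2}d\lambda\bigr)$, the $\widehat{f}$ fill a dense subspace there, and the standard multiplier argument concludes: if $\lvert\widehat{g}\rvert>M$ on a set of positive $\lvert\mathbf{c}(\lambda)\rvert^{-2}d\lambda$-measure, pick such a set $E$ of finite measure on which $\lvert\widehat{g}\rvert\ge M+\varepsilon$, approximate $\mathbf{1}_E$ by $\widehat{f_n}$ in $L^2\bigl((0,\infty),C_0\lvert\mathbf{c}\rvert^{-2}d\lambda\bigr)$, pass to an a.e.\ convergent subsequence, and apply Fatou's lemma in the displayed inequality to get $(M+\varepsilon)^2\le M^2$, a contradiction; hence $\lVert\widehat{g}\rVert_\infty\le M<\infty$.

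\emph{Main obstacle.} The only genuinely non-formal point is Step 1: Lemma \ref{f+g} and Corollary \ref{folg:conv} are stated for smooth compactly supported data, so one must verify by hand that $f*g$ is again in $L^2(X)$ and still satisfies the convolution theorem when $g$ is merely a radial $L^2$ function; the approximation argument above settles this, its one real input being the elementary uniform bound on $\tilde{f}^{\,\sigma}$ for $f\in C_c^\infty(X)$. Everything else --- the Plancherel computations and the passage from a bound on a dense subspace to an essential-supremum bound via Fatou --- is routine.
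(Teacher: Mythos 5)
Your proof is correct and follows essentially the same route as the paper's: both directions are reduced, via the Plancherel theorem, to the boundedness of the multiplication operator by $\widehat{g}^{\sigma}$ on the Fourier side. You are somewhat more explicit than the paper at two points it leaves implicit --- the approximation argument extending $\widetilde{f*g}^{\sigma}=\tilde{f}^{\sigma}\widehat{g}$ from radial $g\in C_c^{\infty}(X)$ to radial $g\in L^2_{\sigma}(X)$, and the Fatou argument showing a bounded multiplier must be essentially bounded --- but these are elaborations of the same argument, not a different one.
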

\begin{proof}
Lemma \ref{f+g} and the density of smooth radial function with compact support in $L^2_{\sigma}(X)$ imply:
\begin{align*}
\widetilde{f*g}^{\sigma}=\tilde{f}^\sigma\cdot \widehat{g}^{\sigma} \quad \forall f\in C^{\infty}_c(X)
\end{align*}
and the Plancherel theorem yields:
\begin{align*}
\lVert f*g\rVert^2_2=C_{0}\int_{0}^{\infty}\int_{\partial X}\lvert \tilde{f}^{\sigma}(\lambda,\xi)\rvert^2 \lvert \widehat{g}
			^{\sigma}(\lambda) \rvert^2 \vert \mathbf{c}(\lambda)\vert^{-2}\,d\mu_{\sigma}(\xi)\,d\lambda.
\end{align*}
Suppose now that the map  $f\mapsto f*g$ extends to a continuous  map  from $L^2(X)$ to $L^2(X)$ also denoted by $f*g$ then we claim that  
\begin{align}\label{eq:conl2}
\widetilde{f*g}^{\sigma}=\tilde{f}^\sigma\cdot \widehat{g}^{\sigma} \quad \forall f\in L^2(X),
\end{align}
almost everywhere.
If $f\in L^2(X)$ there is a sequence $f_n\in C^{\infty}_c(X)$ converging to $f$ in $L^2(X)$. Hence $f_n*g\to f*g$ in $L^2(X)$ and by the Plancherel theorem
\begin{align*} \widetilde{f_n*g}\to \widetilde{f*g}
\end{align*}
in $L^2([0,\infty)\times \partial X,C_0 \vert \mathbf{c}(\lambda)\vert^{-2}\,d\mu_{\sigma}(\xi)\,d\lambda)$. Additionally we have $\widetilde{f_n*g}^{\sigma}=\widetilde{f_n}^{\sigma}\cdot \widehat{g}^{\sigma}$, and 
\begin{align*}
\tilde{f_n}^{\sigma}\to \tilde{f}^{\sigma}
\end{align*}
in $L^2([0,\infty)\times \partial X,C_0 \vert \mathbf{c}(\lambda)\vert^{-2}\,d\mu_{\sigma}(\xi)\,d\lambda)$.
After if necessary passing on to a sub-sequence we can assume $\tilde{f_n}^{\sigma}\to \tilde{f}^{\sigma}$  almost everywhere therefore
$\widetilde{f_n}^{\sigma}\cdot \widehat{g}^{\sigma}\to \widetilde{f}^{\sigma}\cdot \widehat{g}^{\sigma}$ almost everywhere. 
Now the operator of multiplication with $\widehat{g}^{\sigma}$ is bounded on $L^2((0,\infty)\times \partial X,C_0 \vert \mathbf{c}(\lambda)\vert^{-2}\,d\mu_{\sigma}(\xi)\,d\lambda)$ if and only if $\lVert \widehat{g}^{\sigma}\rVert_{\infty}<\infty$.\\
 
Conversely if $\lVert \widehat{g}^{\sigma}\rVert_{\infty}<\infty$ then by the Plancherel theorem 
\begin{align*}
\lVert f*g\rVert_2\leq \lVert \widehat{g}^{\sigma}\rVert_{\infty}\cdot\lVert f\rVert_2.
\end{align*}
Hence the operator $f\mapsto f*g$ extends to a continuous operator from $L^2(X)$ to $L^2(X)$ and equation (\ref{eq:conl2}) remains valid analogous to the considerations above. 
\end{proof}

Using the Fourier inversion formula, the inequality (\ref{eq:conalg}) and the density of $C^{\infty}_c(X)$ in $L^{1}(X)$ we can deduce that for $f,g\in L^1(X)$ radial the convolution $ f*g=g*f\in L^1(X)$ is radial around the same point. Hence, the radial functions $L^1$-function, denoted by $L_{\sigma}^1(X)$, form a commutative Banach algebra under convolution \cite[Theorem 7.2]{Biswas2019}. Note that this also can be shown without the assumption of rank one as done in \cite{PS15}.\\

In light of this, we observe
that the measure algebra $M(\R^+,A(r))$ of the hypergroup can be realised by radial measures on $X$ by defining the convolution of radial measure via approximation by radial functions. Hence, $L^{1}_{\sigma}(X)$ forms a commutative sub-algebra of $M(\R^+A(r))$.
To finish this short introduction to the convolution with radial functions let us briefly state the Kunz-Stein phenomenon proven in \cite[Theorem 8.3]{Biswas2019}, show an extension of it for the convolution with radial eigenfunctions and  prove an inequality of Young-type, both of which will be important in the subsequent discussions. 
\begin{satz}[\cite{Biswas2019}]\label{kunzestein}
Let $(X,g)$ be a simply connected harmonic manifold of rank one. 
Let $x\in X$ and $1\leq p<2$. Let $g\in C_c^{\infty}(X)$ be radial around $x$. Then for any $f\in C_c^{\infty}(X)$ we have 
$$ \lVert f*g\rVert_2\leq C_p\lVert g\rVert_p\lVert f\rVert_2$$
for some constant $C_p>0$ which only depends on $p$. Moreover for any $g\in L^p(X)$ radial around $x$ the map $f\to f*g$ extends to a bounded linear operator on $L^2(X)$ with operator norm at most $C_p\lVert g\rVert_p$.
\end{satz}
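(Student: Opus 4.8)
The plan is to derive this Kunz--Stein type inequality from the Plancherel theorem, reducing it to one uniform integrability estimate for the radial eigenfunctions $\varphi_\lambda$ against the density $A$. Fix $\sigma=x$. Since $f\in C_c^\infty(X)$ and $g\in C_c^\infty(X)$ is radial around $x$, the convolution $f*g$ again lies in $C_c^\infty(X)$, Lemma \ref{f+g} gives $\widetilde{f*g}^{\sigma}=\tilde{f}^{\sigma}\cdot\widehat{g}$, and Theorem \ref{Plancherel Theorem} (cf.\ Corollary \ref{folg:conv}) yields
\begin{align*}
\lVert f*g\rVert_2^2 &= C_0\int_0^\infty\int_{\partial X}\lvert\tilde{f}^{\sigma}(\lambda,\xi)\rvert^2\lvert\widehat{g}(\lambda)\rvert^2\lvert\mathbf{c}(\lambda)\rvert^{-2}\,d\mu_\sigma(\xi)\,d\lambda\\
&\le\lVert\widehat{g}\rVert_\infty^2\,\lVert f\rVert_2^2,
\end{align*}
because $\widehat{g}(\lambda)$ does not depend on $\xi$ and the remaining integral equals $\lVert f\rVert_2^2/C_0$ by Plancherel once more. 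So it suffices to prove $\lVert\widehat{g}\rVert_\infty\le C_p\lVert g\rVert_p$ for every radial $g\in C_c^\infty(X)$ and $1\le p<2$, with $C_p$ depending only on $p$.

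For that, write $g=u\circ d_x$, let $p'\in(2,\infty]$ be conjugate to $p$, split $A=A^{1/p}A^{1/p'}$ in the defining integral of $\widehat{g}$, and apply H\"older's inequality. Using $\lVert g\rVert_p^p=\omega_{n-1}\int_0^\infty\lvert u(r)\rvert^pA(r)\,dr$ this gives
\[
\lvert\widehat{g}(\lambda)\rvert\le\omega_{n-1}^{1/p'}\,\lVert g\rVert_p\,\Bigl(\int_0^\infty\lvert\varphi_\lambda(r)\rvert^{p'}A(r)\,dr\Bigr)^{1/p'},
\]
so everything reduces to the finiteness of $M_{p'}:=\sup_{\lambda\ge0}\int_0^\infty\lvert\varphi_\lambda(r)\rvert^{p'}A(r)\,dr$, after which one may take $C_p=\omega_{n-1}^{1/p'}M_{p'}^{1/p'}$; the case $p=1$ is handled separately using $\lvert\varphi_\lambda\rvert\le1$, giving $C_1=1$. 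To bound $M_{p'}$ I would invoke the standard domination $\lvert\varphi_\lambda(r)\rvert\le\varphi_0(r)$ for real $\lambda$ -- a feature of the Ch\'ebli--Trim\`eche hypergroup attached to $A$ under the conditions C1--C4 -- together with the decay $\varphi_0(r)\le C_\varepsilon e^{-(\rho-\varepsilon)r}$ for every $\varepsilon>0$, which one reads off from (\ref{eigende}) and the asymptotics $\Phi_{\pm\lambda}(r)=e^{(\pm i\lambda-\rho)r}(1+o(1))$, and the bound $A(r)\le Ce^{2\rho r}$ supplied by purely exponential volume growth. Choosing $\varepsilon<\rho(p'-2)/p'$, possible since $p'>2$, makes $\int_0^\infty e^{-p'(\rho-\varepsilon)r}e^{2\rho r}\,dr$ finite, whence $M_{p'}<\infty$; near $r=0$ the integrand is bounded and poses no problem.

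For the last assertion, take $g\in L^p(X)$ radial around $x$ and pick radial $g_n\in C_c^\infty(X)$ with $g_n\to g$ in $L^p$. For fixed $f\in C_c^\infty(X)$, linearity of $\tau_y$, H\"older's inequality and the $\tau_y$-invariance of the $L^p$-norm give $\lvert(f*g_n)(y)-(f*g)(y)\rvert\le\lVert f\rVert_{p'}\lVert g_n-g\rVert_p$ for every $y$, so $f*g_n\to f*g$ uniformly; combining this with $\lVert f*g_n\rVert_2\le C_p\lVert g_n\rVert_p\lVert f\rVert_2$ and Fatou's lemma gives $\lVert f*g\rVert_2\le C_p\lVert g\rVert_p\lVert f\rVert_2$. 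As $C_c^\infty(X)$ is dense in $L^2(X)$, the map $f\mapsto f*g$ then extends uniquely to a bounded operator on $L^2(X)$ with norm at most $C_p\lVert g\rVert_p$.

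The only genuinely nontrivial step is the finiteness of $M_{p'}$: the Plancherel reduction, the H\"older estimate and the density argument are all routine, whereas $M_{p'}<\infty$ requires both the hypergroup domination $\lvert\varphi_\lambda\rvert\le\varphi_0$, resting on the structure results C1--C4, and -- crucially -- the upper bound $A(r)\le Ce^{2\rho r}$, that is, the equivalence of rank one with purely exponential volume growth. Without this last ingredient $\int_0^\infty\varphi_0(r)^{p'}A(r)\,dr$ need not converge, which is the structural reason the statement is confined to the rank-one case.
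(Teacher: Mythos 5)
The paper imports this theorem from \cite{Biswas2019} without reproducing a proof, but your argument follows essentially the intended route: the Plancherel reduction to $\lVert \widehat{g}\rVert_\infty\leq C_p\lVert g\rVert_p$ is exactly the paper's Lemma \ref{fourierholo}, and your key quantity $M_{p'}<\infty$ is precisely Lemma \ref{lemma2.4.9} with $t=0$ (i.e.\ $\varphi_{0,x}\in L^{p'}(X)$ for $p'>2$), which the paper explicitly identifies as the engine behind the Kunze--Stein phenomenon. One small caveat: the expansion (\ref{eigende}) is stated only for $\lambda\in\C\setminus\{0\}$ and degenerates at $\lambda=0$ (where in fact $\varphi_0(r)\asymp(1+r)e^{-\rho r}$), so the decay of $\varphi_0$ should be quoted from Lemma \ref{lemma2.4.9} rather than ``read off'' from (\ref{eigende}); this does not affect the validity of your conclusion.
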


We can extend this result using interpolation arguments to the convolution with radial eigenfunctions:
\begin{satz}
Let $(X,g)$ be a simply connected harmonic manifold of rank one and $\sigma\in X$.
If $f\in C^{\infty}_c(X)$, $q\in [1,2)$ and $p$ Hölder conjugated to $q$ then:
\begin{align*}
\lVert f*\varphi_{\lambda,\sigma}\rVert_p \leq C_q \lVert f \rVert_q\quad \forall \lambda\in \R
\end{align*}
for some constant $C_q>0$ only depending on $q$, with $C_1=1$.
\end{satz}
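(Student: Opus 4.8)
The plan is to obtain the two endpoint estimates $p=\infty$ (i.e. $q=1$) and $p=2$ (i.e. $q=2$, the limiting case) and then interpolate via Riesz–Thorin to cover the whole range $q\in[1,2)$. First I would record the trivial endpoint. Since $\varphi_{\lambda,\sigma}=\varphi_\lambda\circ d_\sigma$ is a radial eigenfunction with $\varphi_\lambda(0)=1$, and since for real $\lambda$ one has the bound $|\varphi_\lambda(r)|\le\varphi_0(r)\le 1$ (the spherical function is real, positive, and maximised at the origin — this follows from the integral representation (\ref{radialeigen}), because $|e^{(i\lambda-\rho)B_{\xi,x}(y)}|=e^{-\rho B_{\xi,x}(y)}$ is independent of $\lambda$ and integrates against the probability measure $\mu_x$ to give $\varphi_{i\rho,x}\le 1$... more simply $|\varphi_{\lambda,\sigma}(y)|\le\int_{\partial X}1\,d\mu_\sigma=1$ once one checks $e^{-\rho B}\le 1$ on the relevant range, or one invokes the known estimate $\|\varphi_\lambda\|_\infty=1$ for $\lambda\in\R$). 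Then for $f\in C^\infty_c(X)$,
\begin{align*}
\|f*\varphi_{\lambda,\sigma}\|_\infty=\sup_{x}\Bigl|\int_X f(y)\,\tau_x\varphi_{\lambda,\sigma}(y)\,dy\Bigr|\le \|f\|_1\,\|\varphi_{\lambda,\sigma}\|_\infty\le\|f\|_1,
\end{align*}
giving the case $q=1$ with $C_1=1$.

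Next comes the $L^2\to L^2$ endpoint, which is the crux. Here the point is that $\varphi_{\lambda,\sigma}\notin L^2(X)$, so Lemma \ref{f+g} cannot be applied literally; instead I would use Theorem \ref{kunzestein} (the Kunz–Stein phenomenon) together with an approximation of $\varphi_{\lambda,\sigma}$ by compactly supported radial functions. Concretely, fix a radial cutoff $\chi_R\in C^\infty_c(X)$ with $\chi_R=1$ on $B(\sigma,R)$, $0\le\chi_R\le1$, and set $g_R:=\chi_R\cdot\varphi_{\lambda,\sigma}$, which is radial around $\sigma$ and compactly supported. Then $f*g_R\to f*\varphi_{\lambda,\sigma}$ uniformly on compacta for each fixed $f\in C^\infty_c(X)$ (dominated convergence, using that $f$ has compact support so the $y$-integral is over a fixed ball). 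To control $\|f*g_R\|_2$ uniformly in $R$, apply Theorem \ref{kunzestein}: for any $p_0\in[1,2)$, $\|f*g_R\|_2\le C_{p_0}\|g_R\|_{p_0}\|f\|_2\le C_{p_0}\|\varphi_{\lambda,\sigma}\|_{L^{p_0}(X)}\|f\|_2$ — but this requires $\varphi_{\lambda,\sigma}\in L^{p_0}(X)$, which holds precisely when $p_0>2$ by the sharp decay $\varphi_\lambda(r)\sim c(\lambda)e^{-\rho r}$ and $A(r)\asymp e^{2\rho r}$, i.e. $|\varphi_{\lambda,\sigma}|^{p_0}A(r)\asymp e^{(2-p_0)\rho r}$, integrable only for $p_0>2$. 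So this direct route fails, and one must argue differently for the $L^2$ endpoint.

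The correct way to handle $q=2$: the claim at the endpoint $q=2$, $p=2$ is that $\|f*\varphi_{\lambda,\sigma}\|_2\le C\|f\|_2$ uniformly in $\lambda\in\R$, which by Lemma \ref{lemma:convextention} (applied to the truncations $g_R$, then passing to the limit) is equivalent to a uniform bound on $\|\widehat{g_R}^\sigma\|_\infty$, and in the limit to understanding $\widetilde{f*\varphi_{\lambda,\sigma}}^\sigma$. By Remark \ref{Bem2.4.5}, $\widetilde{f*\varphi_{\lambda,\sigma}}^\sigma(\mu,\xi)=\tilde f^\sigma(\mu,\xi)\cdot\widehat{\varphi_{\lambda,\sigma}}(\mu)$, and the radial Fourier transform of the eigenfunction $\varphi_{\lambda,\sigma}$ is (formally) a Dirac mass $\delta_\lambda$ in the $\mu$-variable against the Plancherel measure — this is the heuristic reason $\|f*\varphi_{\lambda,\sigma}\|_2$ picks out $\tilde f^\sigma(\lambda,\cdot)$ and the bound is governed by a restriction-type estimate. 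Since making the Dirac mass rigorous is delicate, I would instead derive the $q=2$ bound by interpolating within the \emph{Hölder-conjugate} family: Theorem \ref{kunzestein} gives, for $g$ radial and compactly supported, $\|f*g\|_2\le C_{p_0}\|g\|_{p_0}\|f\|_2$ for $p_0\in[1,2)$; combining the $q=1$ estimate $\|f*g\|_\infty\le\|f\|_1\|g\|_\infty$... no — cleanest is: use the two estimates
\begin{align*}
\|f*\varphi_{\lambda,\sigma}\|_\infty\le\|f\|_1,\qquad \|f*\varphi_{\lambda,\sigma}\|_2\le C\|f\|_2,
\end{align*}
the second being the nontrivial one, and apply Riesz–Thorin to the linear operator $T_\lambda:f\mapsto f*\varphi_{\lambda,\sigma}$ to get $\|T_\lambda f\|_p\le C_q\|f\|_q$ for the intermediate exponents $\tfrac1q=\tfrac{\theta}{1}+\tfrac{1-\theta}{2}$, $\tfrac1p=\tfrac{1-\theta}{2}$, $\theta\in(0,1]$, which traces out exactly $q\in[1,2)$ with $p$ its conjugate (check: $\tfrac1p+\tfrac1q=1-\theta+\theta=1$, good), with $C_q\le C^{1-\theta}$ and $C_1=1$. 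The constant's dependence on $\lambda$ disappears because both endpoint constants are $\lambda$-independent.

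The main obstacle, therefore, is establishing the uniform-in-$\lambda$ bound $\|f*\varphi_{\lambda,\sigma}\|_2\le C\|f\|_2$ for $\lambda\in\R$, since $\varphi_{\lambda,\sigma}$ fails to lie in $L^2$. I expect the resolution is to run the truncation argument with $g_R=\chi_R\varphi_{\lambda,\sigma}$, compute $\widehat{g_R}^\sigma$, and show via the precise asymptotics (\ref{eigende}) of $\varphi_\lambda$ and properties of the $\mathbf c$-function (conditions C1–C4, and the $\mathbf c$-function estimates from \cite{Bloom1995TheHM}) that $\|\widehat{g_R}^\sigma\|_\infty$ stays bounded as $R\to\infty$ by a constant independent of both $R$ and $\lambda$ — essentially because $\widehat{\varphi_{\lambda,\sigma}}$ concentrates near $\mu=\pm\lambda$ where the Plancherel density $|\mathbf c(\mu)|^{-2}$ is finite and the concentration is of unit mass uniformly in $\lambda$. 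Once that uniform bound is in hand, Lemma \ref{lemma:convextention} and Fatou give $\|f*\varphi_{\lambda,\sigma}\|_2\le C\|f\|_2$, and the interpolation above finishes the proof.
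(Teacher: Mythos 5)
Your endpoint $q=1$ and the reduction to $\varphi_{0,\sigma}$ via $\lvert\varphi_{\lambda,\sigma}\rvert\le\varphi_{0,\sigma}\le 1$ are both correct and agree with the paper. The gap is your other endpoint: the bound $\lVert f*\varphi_{\lambda,\sigma}\rVert_2\le C\lVert f\rVert_2$ is \emph{false}, and no truncation argument can rescue it. Indeed, by (\ref{eigende}) the product $\varphi_\lambda(r)^{2}A(r)=\bigl(\mathbf{c}(\lambda)e^{(i\lambda-\rho)r}+\mathbf{c}(-\lambda)e^{(-i\lambda-\rho)r}\bigr)^{2}e^{2\rho r}(1+o(1))$ has non-oscillating part $2\lvert\mathbf{c}(\lambda)\rvert^{2}(1+o(1))$, so for your truncations $g_R=\chi_R\varphi_{\lambda,\sigma}$ one gets $\widehat{g_R}(\lambda)\sim 2\omega_{n-1}\lvert\mathbf{c}(\lambda)\rvert^{2}R\to\infty$: the quantity $\lVert\widehat{g_R}\rVert_\infty$ that you hope stays bounded in fact grows linearly in $R$. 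The spherical transform of $\varphi_{\lambda,\sigma}$ is a point mass at $\mu=\lambda$, not a bounded multiplier, which is exactly why the statement excludes $q=2$; and Riesz--Thorin between $L^1\to L^\infty$ and a false $L^2\to L^2$ bound cannot produce the open range $q\in(1,2)$, since the theorem genuinely needs the $\theta=0$ endpoint to hold.

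The paper circumvents this by applying Stein's complex interpolation to the analytic family $T_{\theta+iy}f=f*\varphi_{0,\sigma}^{\,1+(\theta-1)/2+\epsilon\theta+iy}$ with $\epsilon=\frac{2-q}{4(q-1)}>0$: at $\theta=0$ the kernel $\varphi_{0,\sigma}^{1/2+iy}$ is bounded, giving the $L^1\to L^\infty$ estimate, while at $\theta=1$ the kernel is $\varphi_{0,\sigma}^{1+\epsilon+iy}$, whose spherical transform \emph{is} bounded because $\varphi_{0,\sigma}^{2+\epsilon}\in L^1(X)$ (Lemma \ref{lemma2.4.9}), so by Plancherel convolution with it is bounded on $L^2$. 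Choosing $\theta$ so that $1-\theta/2=1/q$ makes the interpolated exponent of $\varphi_{0,\sigma}$ equal to exactly $1$, and the constant $C_\epsilon$ blows up as $q\to 2$, consistent with the failure at the endpoint. To repair your outline you must let the kernel vary along the interpolation (a strictly larger power of $\varphi_{0,\sigma}$ at the $L^2$ end); interpolating a single fixed operator between two Lebesgue endpoints cannot work here.
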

\begin{proof}
First note that 
\begin{align}\label{eq:unglvarphi}
\lvert \varphi_{\lambda,\sigma}(x)\rvert \leq \varphi_{0,\sigma}(x)\leq 1 \quad\forall \lambda\in\R
\end{align}
 for all $x\in X$. Now for $p=1$ we have by the Hölder inequality
 \begin{align*}
 f*\varphi_{\lambda,\sigma}(x)&=\int_X f(y)\varphi_{\lambda,x}(y)\,dy\\
 &\leq \lVert f \rVert_1\cdot \lVert \varphi_{\lambda,\sigma}\rVert_{\infty}\\
 &\leq  \lVert f \rVert_1.
 \end{align*}
 Hence $\lVert  f*\varphi_{\lambda,\sigma} \rVert_{\infty}\leq \lVert f \rVert_1$.
 Observe now that by (\ref{eq:unglvarphi}) it is sufficient to consider convolution with $\varphi_{0,\sigma}$. 
 Let $q\in (1,2)$, $y\in\R$ and $\epsilon=\frac{2-q}{4(q-1)}>0$. For $\theta\in [0,1]$ and $f\in C_c^{\infty}(X)$ define the family of linear operators $T_{\theta+iy}$ by
 \begin{align*}
 T_{\theta+iy}f=f*\varphi_{0,\sigma}^{1+(\theta-1)/2 +\epsilon\theta+iy}.
 \end{align*}
 Then for $\theta =0$ we have
 \begin{align*}
 \lVert f*\varphi_{0,\sigma}^{1/2+iy}\rVert_{\infty}&\leq \lVert \lvert f \rvert *\varphi_{0,\sigma}^{1/2}\rVert_{\infty}\\
 &\leq\lVert f\rVert_1.
 \end{align*}
 We now observe that by Remark \ref{Bem2.4.5} :
 \begin{align*}
 \widetilde{f*\varphi_{0,\sigma}^{1+\epsilon}}^{\sigma}=\tilde{f}^{\sigma}\cdot \widehat{\varphi_{0,\sigma}^{1+\epsilon}}^{\sigma}
 \end{align*}
 almost everywhere and 
 \begin{align*}
 \lvert \widehat{\varphi_{0,\sigma}^{1+\epsilon}}^{\sigma}(\lambda)\rvert &=\Bigl\lvert \int_X  \varphi_{0,\sigma}^{1+\epsilon}(x)\varphi_{\lambda,\sigma}(x)\,dx\Bigr\rvert\\
 &\leq \int_X  \varphi_{0,\sigma}^{2+\epsilon}(x)\,dx\\
 &< \infty.
 \end{align*}
 Therefore we can conclude using the Plancherel theorem:
 \begin{align*}
 \lVert T_{1+iy}f\rVert_2&\leq \lVert \lvert f\rvert* \varphi_{0,\sigma}^{1+\epsilon}\rVert_2\\
 &=\lVert  \widetilde{\lvert f\lvert *\varphi_{0,\sigma}^{1+\epsilon}}^{\sigma}\lVert_{L^2(0,\infty)\times X,C_0 \,d\mu_{\sigma} \lvert \mathbf{c}(\lambda) \rvert^{-2}\,d\lambda)}\\
 &\leq C_{\epsilon}\lVert f\rVert_2. 
 \end{align*}
 Where $C_{\epsilon}$ only depends on $\epsilon$.
 Therefore  choosing $\theta$ such that $1-\theta/2=1/q$ we get $(\theta-1)/2+\epsilon \theta=0$. And Steins's interpolation theorem (See for intance \cite[Chapter4]{bennett1988interpolation} )yields 
 \begin{align*}
 \lVert f*\varphi_{0,\sigma}\rVert_p \leq C_q \lVert f \rVert_q\quad \forall \lambda\in \R.
 \end{align*}
 And therefore by (\ref{eq:unglvarphi}) we have:
 \begin{align*}
\lVert f*\varphi_{\lambda,\sigma}\rVert_p \leq C_q \lVert f \rVert_q\quad \forall \lambda\in \R.
\end{align*}
 
\end{proof}

\begin{lemma}\label{young}
Let $p,q,r>0$ and satisfy: $1+\frac{1}{r}=\frac{1}{p}+\frac{1}{q}$. Then for $f\in L^p (X)$ and $g\in L^q(X)$, $g=u\circ d_\sigma$ radial around $\sigma\in X$ we get:
\begin{align*}
&f*g\in L^r(X)\\
&\text{ and }\\
&\Vert f*g\Vert_{r}\leq \Vert f\Vert_p \cdot \Vert g \Vert_q.
\end{align*}

\end{lemma}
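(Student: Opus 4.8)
The plan is to run the classical proof of Young's convolution inequality, the only genuinely geometric input being the rotational invariance of the $L^q$-norm of radial functions (the $L^q$-version of properties (2) and (3) listed before the definition of the convolution) together with the symmetry $\tau_x g(y)=\tau_y g(x)$. First I would record the generalisation of those properties: if $g=u\circ d_\sigma\in L^q(X)$ is radial and $x\in X$, then $\tau_x g=u\circ d_x$ and, since on a harmonic manifold the volume element in geodesic polar coordinates is $A(r)\,dr\,d\theta$ centred at \emph{every} point,
\begin{align}\label{eq:translinv}
\Vert \tau_x g\Vert_q^{\,q}=\omega_{n-1}\int_0^\infty |u(r)|^q A(r)\,dr=\Vert g\Vert_q^{\,q}\qquad\forall x\in X .
\end{align}
I would also note that the hypothesis $1+\tfrac1r=\tfrac1p+\tfrac1q$ with $p,q\ge 1$ forces $\tfrac1r\le\tfrac1p$ and $\tfrac1r\le\tfrac1q$, i.e. $r\ge p$ and $r\ge q$.

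Next, for the pointwise bound I would split the integrand of $(f*g)(x)=\int_X f(y)\,\tau_x g(y)\,dy$ as
\begin{align*}
|f(y)\,\tau_x g(y)|=\Bigl(|f(y)|^p\,|\tau_x g(y)|^q\Bigr)^{1/r}\cdot |f(y)|^{1-p/r}\cdot |\tau_x g(y)|^{1-q/r},
\end{align*}
which is legitimate because $r\ge p$ and $r\ge q$. Put $s_2:=\tfrac{pr}{r-p}$ and $s_3:=\tfrac{qr}{r-q}$ (read as $\infty$ when $r=p$, resp. $r=q$); the exponent relation gives $\tfrac1r+\tfrac1{s_2}+\tfrac1{s_3}=\tfrac1p+\tfrac1q-\tfrac1r=1$, so Hölder's inequality for three factors yields
\begin{align*}
|(f*g)(x)|\le \Bigl(\int_X |f(y)|^p|\tau_x g(y)|^q\,dy\Bigr)^{1/r}\Vert f\Vert_p^{\,1-p/r}\,\Vert \tau_x g\Vert_q^{\,1-q/r},
\end{align*}
and the last factor equals $\Vert g\Vert_q^{\,1-q/r}$ by \eqref{eq:translinv}.

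Then I would raise this to the $r$-th power, integrate over $x\in X$, and apply Tonelli's theorem (all integrands nonnegative):
\begin{align*}
\Vert f*g\Vert_r^{\,r}\le \Vert f\Vert_p^{\,r-p}\,\Vert g\Vert_q^{\,r-q}\int_X |f(y)|^p\Bigl(\int_X |\tau_x g(y)|^q\,dx\Bigr)dy .
\end{align*}
The inner integral is evaluated using the symmetry $\tau_x g(y)=\tau_y g(x)$ and then \eqref{eq:translinv} once more: $\int_X |\tau_x g(y)|^q\,dx=\int_X |\tau_y g(x)|^q\,dx=\Vert g\Vert_q^{\,q}$, independent of $y$. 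Hence the double integral is $\Vert g\Vert_q^{\,q}\,\Vert f\Vert_p^{\,p}$ and
\begin{align*}
\Vert f*g\Vert_r^{\,r}\le \Vert f\Vert_p^{\,r-p}\Vert g\Vert_q^{\,r-q}\cdot\Vert g\Vert_q^{\,q}\,\Vert f\Vert_p^{\,p}=\bigl(\Vert f\Vert_p\,\Vert g\Vert_q\bigr)^{r},
\end{align*}
which is the asserted inequality; running the same Tonelli computation with $|f|,|g|$ in place of $f,g$ shows $(|f|*|g|)(x)<\infty$ for a.e.\ $x$, so $(f*g)(x)$ is well defined almost everywhere and lies in $L^r(X)$.

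The computation is essentially bookkeeping, so I do not expect a serious obstacle; the one step that uses geometry rather than pure measure theory is the invariance \eqref{eq:translinv}, which is precisely where harmonicity enters. The only mildly delicate points are the degenerate exponent cases $p=1$ (so $r=q$), $q=1$ (so $r=p$), and $r=\infty$ (so $p,q$ are Hölder conjugate): in each the three-factor splitting collapses — a factor becomes the constant $1$ of $L^\infty$-norm $1$, or the estimate reduces to a single application of Hölder, $|(f*g)(x)|\le\Vert f\Vert_p\Vert\tau_x g\Vert_{p'}=\Vert f\Vert_p\Vert g\Vert_{p'}$ — so the inequality is immediate and in particular recovers \eqref{eq:conalg} when $q=1$.
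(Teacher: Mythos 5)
Your proof is correct and follows essentially the same route as the paper's: the same three-factor splitting of the integrand, the generalised H\"older inequality with exponents $r$, $\tfrac{pr}{r-p}$, $\tfrac{qr}{r-q}$, and the translation invariance $\int_X |\tau_x g(y)|^q\,dx=\omega_{n-1}\int_0^\infty |u(r)|^q A(r)\,dr=\Vert g\Vert_q^q$ to evaluate the inner integral after Tonelli. Your explicit handling of the degenerate exponent cases and the remark that a.e.\ finiteness of $|f|*|g|$ makes $f*g$ well defined are small additions the paper leaves implicit.
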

\begin{proof}
First:
\begin{align*}
   & \vert (f*g) (x)\vert\leq \int_{X}\vert f(y)\vert \vert \tau_x g(y)\vert \,dy\\
&=\int_X \vert f(y) \vert^{1+p/r -p/r} \cdot \vert \tau_x g(y)\vert^{1+q/r-q/r} \,dy\\
&=\int_X \vert f(y)\vert^{p/r}\cdot \vert \tau_x g(y)\vert^{q/r}\cdot \vert f(y)\vert^{1-p/r}\vert \tau_x g(y)\vert^{1-q/r}\,dy\\
&=\int_X (\vert f(y)\vert^{p}\cdot \vert \tau_x g(y)\vert^{q})^{1/r}\vert f(y)\vert^{(r-p)/r}\vert \tau_x g(y)\vert^{(r-q)/r} \,dy.
\end{align*}
Since \begin{align*}
    \frac{1}{r}+\frac{r-p}{pr}+\frac{r-q}{rq}&=\frac{1}{r}+\frac{1}{p}-\frac{1}{r}+\frac{1}{q}-\frac{1}{r}\\&=\frac{1}{p}+\frac{1}{q}-\frac{1}{r}=1,
\end{align*}
we can use the generalised H\"older inequality to obtain:
\begin{align*}
 \vert (f*g) (x)\vert&\leq \Vert (\vert f(y)\vert^{p}\cdot \vert \tau_x g(y) \vert^{q})^{1/r}\Vert_r\\
 & \cdot \Vert \vert f(y)\vert^{(r-p)/r}\Vert_{\frac{pr}{r-p}}\cdot \Vert \vert\tau_x g(y)\vert^{(r-q)/r}\Vert_{\frac{qr}{r-q}}.
\end{align*}
 We now look at these three expressions separately
\begin{align*}
&\Vert (\vert f(y)\vert^{p}\cdot \vert \tau_x g(y) \vert^{q})^{1/r}\Vert_{r}\\
&=\Bigl(\int_X (\vert f(y)\vert^{p}\cdot \vert \tau_x g(y)\vert^{q})^{(1/r)\cdot r} \,dy\Bigr)^{1/r}\\
&=\Bigl(\int_X \vert f(y) \vert^{p}\cdot\vert \tau_x g(y)\vert^{q}\,dy\Bigr)^{1/r}\\
\Vert \vert f(y)\vert^{(r-p)/r}\Vert_{\frac{pr}{r-p}}&=\Bigl(\int_X \vert f(y)\vert^{\frac{r-p}{r}\cdot\frac{qr}{r-p}}\,dy\Bigr)^{\frac{r-p}{pr}}\\
&=\Bigl(\int_X \vert f(y)\vert^{p}\,dy\Bigr)^{\frac{1}{p}\cdot \frac{r-p}{r}}\\
&=\Vert f \Vert_{p}^{(r-p)/r}\\
\Vert \vert\tau_x g(y)\vert^{(r-q)/r}\Vert_{\frac{qr}{r-q}}
&=\Bigl(\int_X \vert \tau_x g(y)\vert^{\frac{r-q}{r}\cdot \frac{qr}{r-q}}\,dy\Bigr)^{\frac{r-q}{qr}}\\
&=\Bigl(\int_X \vert \tau_x g(y)\vert^{q}\,dy\Bigr)^{\frac{1}{q}\cdot\frac{r-q}{r}}\\
&=\Vert g\Vert_{q}^{(r-q)/r}.
\end{align*}

Now we conclude
\begin{align*}
&\Vert(f*g)(x)\Vert_{r}^{r}
=\int_X \vert (f*g)(x)\vert^{r}\,dx\\
\leq& \int_X \Bigl(\int_X(\vert f(y)\vert^{p}\cdot \vert \tau_x g(y)\vert^{q})^{1/r}\cdot \vert f(y)\vert^{(r-p)/r}\cdot \vert \tau_x g(y)\vert^{(r-q)/r}\,dy\Bigr)^{r}\\
&=\int_X \Bigl(\int_X (\vert f(y)\vert^{p}\cdot \vert \tau_x g(y)\vert^{q})\,dy\Bigr)\Vert f\Vert_{p}^{r-p}\Vert g\Vert^{r-q}\,dx\\
&=\Vert f\Vert_{p}^{r-p}\Vert g\Vert^{r-q}\int_X\int_X \vert \tau_x g(y)\vert^{q}\vert f(y)\vert^{p}\,dy\,dx\\
&=\Vert f\Vert_{p}^{r-p}\Vert g\Vert^{r-q}\int_X\vert f(y)\vert^{p}\,dy\int_X \vert \tau_x g(y)\vert^{q}\,dx\\
&=\Vert f\Vert_{p}^{r-p}\Vert g\Vert^{r-q}\int_X\vert f(y)\vert^{p}\,dy\omega_{n-1}\int_{0}^{\infty}\lvert u(r)\rvert^q A(r)\,dr\\
&=\Vert f\Vert_{p}^{r-p}\Vert g\Vert^{r-q}\Vert f\Vert_{p}^{p}\cdot\Vert g\Vert_{q}^{q}\\
&=\Vert f\Vert_{p}^{r}\cdot\Vert g\Vert_{q}^{r}.
\end{align*}
Hence, the assertion follows by taking the $r$-root. 
\end{proof}
\begin{bem}
As apparent from the proof the  Young-type inequality holds without the assumption of rank one. 
\end{bem}

\subsection{$L^p$-multiplier}

Let $q\in[0,2)$ and let $f,g\in C^{\infty}_c(X)$ with $g$ radial. By the discussion from the previous section we have
\begin{align*}
 \lVert f*g\rVert_2\leq C_p\lVert f\lVert_2\cdot\lVert g\rVert_q
 \end{align*} 
 for $C_q$ 
as in Theorem \ref{kunzestein}
  and
  \begin{align*}
  \lVert f*g\rVert_{\infty}\leq \lVert f\rVert_{\infty}\cdot\lVert g\rVert_1.
  \end{align*} 

By  the Riesze-Thorin theorem  
 we conclude
\begin{align*}
\lVert f*g\rVert_p\leq C_p \lVert f\rVert_p\cdot\lVert g\rVert_r
\end{align*}
 for $p\geq 2,~r\in [1,2)$ such that $\frac{1}{r}\leq 1+\frac{1}{p}$ and $C_p$  a positive constant. This implies that by convolution with a radial $L^r$-
 function we obtain a bounded linear operator 

\begin{align*}
T:L^p(X)&\to L^p(X)\\
f&\mapsto f*g
\end{align*}
such that 
\begin{enumerate}
\item $T$ preserves the subspace of radial functions,
\item for all radial functions $\psi,\phi$ we have $$T\psi*\phi=\psi*T\phi,$$
\item for all radial $C_c^{\infty}(X)$ functions $\phi$ and $x\in X$ we have $$T\tau_x \phi=\tau_xT\phi.$$
\end{enumerate}
This leads to the next definition:
\begin{defi}
For $p\in [1,\infty]$ an $L^p$-multiplier is a bounded linear operator $T:L^p(X)\to L^p(X)$ such that conditions 1-3 above are satisfied. 
\end{defi}
\begin{bem}
By the considerations above the convolution with a radial $L^1$ function defines a $L^p$-multiplier for all $p\in [1,\infty]$. Another example is the convolution with radial measures, see for instance \cite{PS15}, \cite{biswas2018dynamics} and \cite{MR4443685}.
\end{bem}

The next lemma is of vital importance not just for the discussion of the dynamics of an $L^p$-multiplier but also in the proof of the Kurz-Stein phenomenon which is due to \cite[Lemma 8.1]{Biswas2019}. 
\begin{lemma}[\cite{Biswas2019}] \label{lemma2.4.9}
Let $x\in X$, $q>2$ and $\gamma_q=1-\frac{2}{q}$. Then for $t\in(-\gamma_q\rho,\gamma_q\rho)$ and any $\lambda\in \C$ with $\operatorname{Im}\lambda=t$ we have
$$\lVert \varphi_{\lambda,x}\rVert_q\leq \lVert \varphi_{it,x}\rVert_q< \infty.$$
In particular $\varphi_{\lambda,x}\in L^q(X)$ for $\lambda\in S_q:=\{\lambda\in \C\mid \lvert\operatorname{Im}\lambda\rvert<\gamma_q\rho\}.$
\end{lemma}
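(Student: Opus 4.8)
The plan is to base everything on the boundary integral representation \eqref{radialeigen} of the radial eigenfunction, on the decomposition \eqref{eigende} together with the asymptotics $\Phi_{\pm\lambda}(r)=e^{(\pm i\lambda-\rho)r}(1+o(1))$, and on the purely exponential growth $A(r)\le Ce^{2\rho r}$.

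First I would prove the pointwise estimate
\[
\lvert\varphi_{\lambda,x}(y)\rvert\le\varphi_{it,x}(y)\qquad(y\in X),
\]
where $t=\operatorname{Im}\lambda$. Writing $\lambda=s+it$ with $s\in\R$ one has $\operatorname{Re}(i\lambda-\rho)=-t-\rho$, so $\bigl\lvert e^{(i\lambda-\rho)B_{\xi,x}(y)}\bigr\rvert=e^{(-t-\rho)B_{\xi,x}(y)}$; since $\mu_x$ is a probability measure, \eqref{radialeigen} and the triangle inequality give
\[
\lvert\varphi_{\lambda,x}(y)\rvert\le\int_{\partial X}e^{(-t-\rho)B_{\xi,x}(y)}\,d\mu_x(\xi)=\varphi_{it,x}(y),
\]
the right-hand side being a positive real number. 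Taking $L^q$-norms yields $\lVert\varphi_{\lambda,x}\rVert_q\le\lVert\varphi_{it,x}\rVert_q$, and the last assertion of the lemma then follows as soon as $\varphi_{it,x}\in L^q(X)$ for $\lvert t\rvert<\gamma_q\rho$. Thus the whole content is the finiteness of $\lVert\varphi_{it,x}\rVert_q$.

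Since $\varphi_{it,x}=\varphi_{it}\circ d_x$ with $\varphi_{it}>0$ (again by \eqref{radialeigen}), we have
\[
\lVert\varphi_{it,x}\rVert_q^q=\omega_{n-1}\int_0^\infty\varphi_{it}(r)^q A(r)\,dr.
\]
On $[0,1]$ the integrand is bounded, as $\varphi_{it}$ is continuous with $\varphi_{it}(0)=1$ and $A(r)=r^{2\alpha+1}B(r)$ with $2\alpha+1>0$. For the tail, assume first $t\neq0$; then $it\neq0$, so \eqref{eigende} gives $\varphi_{it}=\mathbf{c}(it)\Phi_{it}+\mathbf{c}(-it)\Phi_{-it}$, and $\Phi_{\pm it}(r)=e^{(\mp t-\rho)r}(1+o(1))$ yields $\varphi_{it}(r)=O\bigl(e^{(\lvert t\rvert-\rho)r}\bigr)$ as $r\to\infty$. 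Combining this with $A(r)\le Ce^{2\rho r}$ and the identity $q\rho-2\rho=q\gamma_q\rho$ gives, for large $r$,
\[
\varphi_{it}(r)^q A(r)\le C'e^{q(\lvert t\rvert-\rho)r+2\rho r}=C'e^{q(\lvert t\rvert-\gamma_q\rho)r},
\]
which is integrable on $[1,\infty)$ precisely because $\lvert t\rvert<\gamma_q\rho$; hence $\varphi_{it,x}\in L^q(X)$. The endpoint $t=0$ lies in the range because $\gamma_q\rho>0$, and it is reduced to the previous case: fixing any $t_0\in(0,\gamma_q\rho)$ and using $1\le\cosh\bigl(t_0 B_{\xi,x}(y)\bigr)$ in \eqref{radialeigen} gives $\varphi_{0,x}\le\tfrac12\bigl(\varphi_{it_0,x}+\varphi_{-it_0,x}\bigr)$ pointwise, and the right-hand side lies in $L^q(X)$ by what was just shown.

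I expect the main obstacle to be the tail bound $\varphi_{it}(r)=O(e^{(\lvert t\rvert-\rho)r})$: this is exactly where \eqref{eigende} and the Jacobi-type asymptotics of $\Phi_{\pm\lambda}$ enter, and one must check that $\mathbf{c}$ is finite at $\pm it$ (it is, being defined on $\C\setminus\{0\}$) and that the $o(1)$ error is genuine for the fixed parameter $it$. The degeneracy of \eqref{eigende} at $\lambda=0$ is a second, minor, delicate point, handled above by the $\cosh$-comparison rather than by invoking the sharper logarithmic asymptotics of $\varphi_0$.
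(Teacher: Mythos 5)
The paper states this lemma without proof, citing \cite{Biswas2019}, and your argument is correct and follows essentially the same route as that source: the pointwise domination $\lvert\varphi_{\lambda,x}\rvert\le\varphi_{it,x}$ obtained from the boundary representation \eqref{radialeigen}, followed by the tail bound $\varphi_{it}(r)=O(e^{(\lvert t\rvert-\rho)r})$ from \eqref{eigende} combined with $A(r)\le Ce^{2\rho r}$ and the identity $(q-2)\rho=q\gamma_q\rho$. Your $\cosh$-comparison at the degenerate parameter $\lambda=0$ of \eqref{eigende} is a clean way to sidestep the logarithmic correction in the asymptotics of $\varphi_0$ and introduces no gap.
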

In \cite{Biswas2019} the authors observed that the spherical Fourier transform can be extended to the whole of $L^1(X)$ by fixing a base point $x\in X$ and defining for $g\in L^1(X)$
\begin{align}\label{spherical}
\hat{g}(\lambda)=\int_X g(y)\varphi_{\lambda,x}(y)\,dy,\quad \lambda\in\R.
\end{align}

Furthermore, the authors  showed:
\begin{lemma}[\cite{Biswas2019}]\label{fourierholo}
Let $x\in X$, $1\leq p<2$, $g\in L^p(X)$ and $q>2$ with $\frac{1}{p}+\frac{1}{q}=1$ then $\hat{g}$ extends to a holomorphic function in $\lambda$ on the strip $S_q$ and is bounded on any closed sub strip $\{\lambda \in S_q\mid \lvert\operatorname{Im}\lambda\rvert\leq t\}$, for $0<t<\gamma_q\rho.$
In particular $\hat{g}$ on $\R$ satisfies the bound
$$\lVert \hat{g}\rVert_{\infty}\leq C_p\lVert g\rVert_p$$
for some constant $C_p$.
\end{lemma}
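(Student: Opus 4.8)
The plan is to establish the holomorphic extension of $\hat g$ by combining the defining integral \eqref{spherical} with the size estimate for the spherical functions $\varphi_{\lambda,x}$ in the strip $S_q$ provided by Lemma \ref{lemma2.4.9}. First I would fix $x \in X$ and, for $g \in L^p(X)$ with $1 \le p < 2$, consider the integral $\hat g(\lambda) = \int_X g(y)\varphi_{\lambda,x}(y)\,dy$ and check it converges absolutely for every $\lambda$ in the open strip $S_q$: by Hölder's inequality, $|\hat g(\lambda)| \le \lVert g\rVert_p \lVert \varphi_{\lambda,x}\rVert_q$, and Lemma \ref{lemma2.4.9} tells us $\lVert \varphi_{\lambda,x}\rVert_q \le \lVert \varphi_{it,x}\rVert_q < \infty$ whenever $t = \operatorname{Im}\lambda$ satisfies $|t| < \gamma_q\rho$. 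This simultaneously gives the claimed boundedness on closed substrips, since on $\{|\operatorname{Im}\lambda| \le t\}$ the function $s \mapsto \lVert \varphi_{is,x}\rVert_q$ is bounded (monotone in $|s|$ by the lemma, so the sup is attained at $\pm t$), yielding $\sup_{|\operatorname{Im}\lambda|\le t}|\hat g(\lambda)| \le C_{p,t}\lVert g\rVert_p$. Specializing to $t \to 0$ (or just noting $\lVert\varphi_{0,x}\rVert_q$ when it is finite, or more carefully taking any small $t$) gives the final $L^\infty$ bound $\lVert \hat g\rVert_\infty \le C_p \lVert g\rVert_p$ on $\R$.

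Next I would prove holomorphicity in $\lambda$ on $S_q$. The cleanest route is Morera's theorem combined with Fubini: for any closed triangle (or rectangle) $\Gamma$ contained in $S_q$, one has $\oint_\Gamma \hat g(\lambda)\,d\lambda = \oint_\Gamma \int_X g(y)\varphi_{\lambda,x}(y)\,dy\,d\lambda$, and the order of integration may be swapped because $\Gamma$ lies in a closed substrip $\{|\operatorname{Im}\lambda|\le t\}$ with $t < \gamma_q\rho$ on which the integrand is dominated by $|g(y)| \cdot \sup_{\lambda\in\Gamma}|\varphi_{\lambda,x}(y)|$ with an $L^1$-in-$(y,\lambda)$ bound coming from the uniform estimate above. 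For each fixed $y$, $\lambda \mapsto \varphi_{\lambda,x}(y)$ is holomorphic — this is the analyticity of the spherical function in the spectral parameter, which follows from the representation \eqref{radialeigen} of $\varphi_{\lambda,x}$ as $\int_{\partial X} e^{(i\lambda-\rho)B_{\xi,x}(y)}\,d\mu_x(\xi)$ together with the same dominated-convergence / Morera argument applied to the $\xi$-integral, or directly from the construction of $\varphi_\lambda$ as solution of $L_A\varphi_\lambda = -(\lambda^2+\rho^2)\varphi_\lambda$ depending holomorphically on $\lambda$. Hence $\oint_\Gamma \varphi_{\lambda,x}(y)\,d\lambda = 0$, so $\oint_\Gamma \hat g(\lambda)\,d\lambda = 0$, and Morera gives holomorphy of $\hat g$ on $S_q$.

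Finally I would reconcile this with \eqref{spherical}: for $g \in L^p(X)$ with $1 \le p < 2$ the function $g$ is a priori only defined up to the convergence of \eqref{spherical} on $\R$, but the argument above shows that integral converges on all of $S_q$ and the resulting function is the unique holomorphic extension of $\hat g|_\R$; uniqueness of analytic continuation makes the statement unambiguous. The main obstacle, in my view, is not any single estimate but rather being careful about two analytic points: (i) justifying that $\lambda \mapsto \varphi_{\lambda,x}(y)$ is genuinely holomorphic on the whole strip and not merely continuous — this needs either the hypergroup-theoretic fact that $\varphi_\lambda$ is an entire function of $\lambda^2$ (from C1–C4 and \cite{Bloom1995TheHM}) or a separate verification via \eqref{radialeigen}; and (ii) getting the Fubini interchange airtight, which is exactly where Lemma \ref{lemma2.4.9} does the heavy lifting by upgrading pointwise finiteness of $\lVert \varphi_{\lambda,x}\rVert_q$ to a bound uniform over compact subsets of $S_q$. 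Everything else is a routine application of Hölder plus Morera.
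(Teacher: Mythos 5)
Your argument is correct and is essentially the standard one: the paper itself imports Lemma \ref{fourierholo} from \cite{Biswas2019} without proof, but your combination of H\"older's inequality with Lemma \ref{lemma2.4.9} followed by a Morera--Fubini argument is exactly the strategy the paper uses for the closely analogous Lemma \ref{lemma:holo2}. The only imprecision is your claim that monotonicity of $s\mapsto\lVert\varphi_{is,x}\rVert_q$ is ``by the lemma'' --- Lemma \ref{lemma2.4.9} does not state this, though it follows easily from the representation (\ref{radialeigen}) after symmetrizing in $\pm s$, so this is a cosmetic rather than a substantive gap.
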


From the next proposition, it becomes clear that the name "multiplier" is justified.
 Note that in large parts this discussion carries over from the discussion of so-called Fourier multipliers on hypergroups, see for instance  \cite{fouriermulti}. 
 Furthermore a version of Proposition \ref{multiplier} for Damek-Ricci spaces was shown in \cite{Anker96} and for symmetric spaces in \cite{Stein}.
\begin{prop}[\cite{MR4443685}]\label{multiplier}
Let $1\leq q<2$ and $p>2$ such that $\frac{1}{q}+\frac{1}{p}=1$ and let $T:L^p(X)\to L^p(X)$ be an $L^p$-multiplier. Then for any radial $C_c^{\infty}$ function $\phi$ we have $T\phi\in L^q(X)$  and  there exist a holomorphic function 
$m_T$ on $S_p:=\{\lambda\in \C\mid \lvert \operatorname{Im}\lambda\rvert<(1-2/p)\rho\}$ such that for any radial $C_c^{\infty}$ function $\phi$ we have

$$\widehat{T\phi}(\lambda)=m_T(\lambda)\hat{\phi}(\lambda),\quad\lambda\in S_p.$$
\end{prop}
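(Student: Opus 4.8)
The plan is to argue in four steps: (i) upgrade the a priori bound $T\phi\in L^p(X)$ to $T\phi\in L^q(X)$; (ii) read off holomorphy of $\widehat{T\phi}$ on $S_p$ from Lemma~\ref{fourierholo}; (iii) establish the functional equation $\hat\phi\,\widehat{T\psi}=\hat\psi\,\widehat{T\phi}$, first on $\R$ and then on $S_p$; and (iv) define $m_T$ by a local division and patch the pieces together.

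For step (i) the key is the symmetry relation $\int_X(T\psi)\,\phi\,dx=\int_X\psi\,(T\phi)\,dx$ for radial $\phi,\psi\in C^\infty_c(X)$. To get it, fix such $\phi,\psi$ and a radial approximate identity $\eta_n\in C^\infty_c(X)$ as in Lemma~\ref{id}, pair the multiplier identity $T\psi*\phi=\psi*T\phi$ (property (2) of an $L^p$-multiplier) against $\eta_n$, use the adjunction $\langle f*g,h\rangle=\langle f,g*h\rangle$ valid for radial $g,h$, and let $n\to\infty$, noting $\phi*\eta_n\to\phi$ in $L^q$ and $T\phi*\eta_n\to T\phi$ in $L^p$ by Lemma~\ref{id}. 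Boundedness of $T$ on $L^p(X)$ then gives $|\int_X(T\phi)\,\psi\,dx|=|\int_X(T\psi)\,\phi\,dx|\le\lVert T\rVert\,\lVert\phi\rVert_q\lVert\psi\rVert_p$ for all radial $\psi\in C^\infty_c(X)$, and since $T\phi$ is radial a standard duality argument (valid also when $q=1$) yields $T\phi\in L^q(X)$.

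Step (ii) is then immediate: as $T\phi\in L^q(X)$ with $1\le q<2$ and $p$ is the Hölder conjugate of $q$, Lemma~\ref{fourierholo} says $\widehat{T\phi}$ extends holomorphically to $S_p$; the same lemma applied with exponent $1$ shows that $\hat\phi$ is holomorphic on $S_p$ for every radial $\phi\in C^\infty_c(X)$. For step (iii), observe that if $\phi,\psi$ are radial in $C^\infty_c(X)$ then so is $\phi*\psi$, and property (3) of an $L^p$-multiplier (writing $\phi*\psi=\int_X\phi(y)\,\tau_y\psi\,dy$ and using commutativity of convolution of radial functions) gives $T(\phi*\psi)=\phi*T\psi=\psi*T\phi$. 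One then needs the convolution identity $\widehat{\phi*h}(\lambda)=\hat\phi(\lambda)\hat h(\lambda)$ on $\R$ for radial $\phi\in C^\infty_c(X)$ and radial $h\in L^q_\sigma(X)$; I would obtain this by approximating $h$ in $L^q$ by radial $C^\infty_c$-functions, applying Lemma~\ref{f+g} to those, and passing to the limit using the Young inequality Lemma~\ref{young} (for $\phi*h_k\to\phi*h$ in $L^q$) and the continuity of $h\mapsto\hat h(\lambda)$ on $L^q$ (pairing against $\varphi_{\lambda,\sigma}\in L^{q'}$, finite by Lemma~\ref{lemma2.4.9}). Applying this with $h=T\psi$ and with $h=T\phi$ gives $\hat\phi(\lambda)\widehat{T\psi}(\lambda)=\widehat{T(\phi*\psi)}(\lambda)=\hat\psi(\lambda)\widehat{T\phi}(\lambda)$ on $\R$; since all four transforms are holomorphic on the connected open set $S_p$, the identity persists throughout $S_p$.

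For step (iv): given $\lambda_0\in S_p$, the eigenfunction $\varphi_{\lambda_0}$ is not identically zero ($\varphi_{\lambda_0}(0)=1$), so some radial $\phi\in C^\infty_c(X)$ has $\hat\phi(\lambda_0)\ne0$; then $\widehat{T\phi}/\hat\phi$ is holomorphic in a neighborhood of $\lambda_0$, by step (iii) any two such local quotients agree where both are defined, and they therefore patch together into a single holomorphic function $m_T$ on $S_p$ with $\widehat{T\phi}=m_T\hat\phi$ near every point of $S_p$, hence on all of $S_p$, for every radial $\phi\in C^\infty_c(X)$. I expect the genuine obstacle to be step (i): extracting $L^q$-integrability of $T\phi$ with $q<2$ from nothing more than the $L^p$-boundedness of $T$ is exactly what the restriction $p\le2$ buys, and it requires one to exploit the multiplier axioms (2)--(3) and the approximate-identity Lemma~\ref{id} carefully; steps (ii)--(iv) are comparatively soft, the only recurrent nuisance being the passage from the $C^\infty_c$-convolution theorem of Lemma~\ref{f+g} to $L^q$-functions, which Lemma~\ref{young} makes routine.
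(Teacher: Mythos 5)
The paper does not prove Proposition \ref{multiplier}: it is imported verbatim from \cite{MR4443685}, so there is no in-text argument to compare against. Your proof is correct and follows the route one would expect (and, up to presentation, the one used in the cited source): the symmetry identity $\langle T\psi,\phi\rangle=\langle\psi,T\phi\rangle$, extracted from the multiplier axiom $T\psi*\phi=\psi*T\phi$ by pairing against an approximate identity, yields $T\phi\in L^q$ by duality; Lemma \ref{fourierholo} then gives holomorphy of $\widehat{T\phi}$ on $S_p$; the cross-relation $\hat\phi\,\widehat{T\psi}=\hat\psi\,\widehat{T\phi}$ propagates from $\R$ to $S_p$ by the identity theorem; and $m_T$ is obtained by local division and patching. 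Two minor remarks. First, the detour through $T(\phi*\psi)$ and the Bochner integral in step (iii) is unnecessary: axiom (2) together with commutativity of convolution of radial functions already gives $\phi*T\psi=T\psi*\phi=\psi*T\phi$, which is all the cross-relation needs, and this avoids having to justify pulling $T$ through a vector-valued integral. Second, in step (i) the endpoint $q=1$, $p=\infty$ genuinely requires the variant you allude to, since $L^1(X)$ is not the dual of $L^{\infty}(X)$; one should make explicit that the bound $\lvert\int_X (T\phi)\psi\,dx\rvert\le C\lVert\psi\rVert_{\infty}$ for all radial $\psi\in C^{\infty}_c(X)$, applied to smooth radial approximations of $\operatorname{sgn}(\overline{T\phi})$ truncated to balls, gives $\int_{B_R}\lvert T\phi\rvert\le C$ uniformly in $R$ and hence $T\phi\in L^1(X)$.
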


\begin{bem}\label{bem:multifunction}
Note that the above also holds for $T:L^q(X)\to L^q(X)$ by duality. Therefore it holds for all $L^p$-multipliers if $p\neq 2$.
\end{bem}
The next lemma is elementary but instrumental to show that $L^p$-multiplier can not be chaotic in the range $p\in[1,2)$. 		
\begin{lemma}\label{lemma:dual}
Let $\frac{1}{p}+\frac{1}{q}=1$,  $p\neq q$ and $p\neq \infty$.
For an $L^p$-multiplier  $T:L^p(X)\to L^p(X)$  with symbol $m_T(\lambda)$, its dual  $T^*:L^q(X)\to L^q(X)$ is a $L^q$ multiplier and 
has symbol $\overline{m_T(\bar{\lambda})}$.
\end{lemma}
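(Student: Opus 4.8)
The plan is to establish, as the one substantive point, that $T^*$ acts on $C_c^\infty(X)$ by the explicit formula $T^*F=\overline{T\bar F}$, where $\bar F(x):=\overline{F(x)}$; every assertion of the lemma then follows by conjugating the corresponding property of $T$. For this I would first record three elementary symmetries. (i) For $b$ radial around $\sigma$, $\overline{a*b}=\bar a*\bar b$, since $\tau_x\bar b=\overline{\tau_x b}$. (ii) For $b$ radial around $\sigma$, convolution by $\bar b$ is the formal adjoint of convolution by $b$ for the pairing $\langle f,h\rangle=\int_X f\bar h$, i.e. $\langle a*b,h\rangle=\langle a,h*\bar b\rangle$; this comes from Fubini and the symmetry $\tau_xb(y)=\tau_yb(x)$ (a consequence of $d(x,y)=d(y,x)$), the required integrability being furnished by (\ref{eq:conalg}) and Lemma \ref{young}. (iii) Since $L_A$ has real coefficients, uniqueness of the normalised smooth solution of $L_A\varphi_\lambda=-(\lambda^2+\rho^2)\varphi_\lambda$, $\varphi_\lambda(0)=1$, gives $\overline{\varphi_\lambda}=\varphi_{\bar\lambda}$, hence $\widehat{\bar F}(\lambda)=\overline{\hat F(\bar\lambda)}$ for radial $F$, and in particular $\widehat{\bar F}(\lambda)=\overline{\hat F(\lambda)}$ for $\lambda\in\R$.

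Next I would prove the formula. For $\Phi$ radial in $C_c^\infty(X)$ and $F\in L^p(X)$, the integrated form of condition 3 holds: rewriting $F*\Phi=\int_X F(z)\,\tau_z\Phi\,dz$ as a Bochner integral in $L^p(X)$, pulling the bounded operator $T$ inside, and using $T\tau_z\Phi=\tau_z(T\Phi)$ together with the radiality of $T\Phi$ (condition 1), one gets $T(F*\Phi)=F*T\Phi$. Then, for $\Psi\in L^q(X)$, $\Phi$ radial in $C_c^\infty(X)$ and $f\in C_c^\infty(X)$, applying (ii) twice, the definition of $T^*$ and the identity just obtained,
$$\langle T^*\Psi*\bar\Phi,f\rangle=\langle T^*\Psi,f*\Phi\rangle=\langle\Psi,T(f*\Phi)\rangle=\langle\Psi,f*T\Phi\rangle=\langle\Psi*\overline{T\Phi},f\rangle,$$
so $T^*\Psi*\bar\Phi=\Psi*\overline{T\Phi}$ for every $\Psi\in L^q(X)$. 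Taking $\Psi=F\in C_c^\infty(X)$ and $\Phi=\psi_n$ a nonnegative radial $C_c^\infty$ approximate identity (so $\bar\psi_n=\psi_n$) as in Lemma \ref{id}, and using (i) and the integrated condition 3 once more,
$$T^*F*\psi_n=F*\overline{T\psi_n}=\overline{\bar F*T\psi_n}=\overline{T(\bar F*\psi_n)}.$$
Since $\bar F*\psi_n\to\bar F$ in $L^p(X)$ (Lemma \ref{id}) and $T$ is continuous, the right side tends in $L^p(X)$ to $\overline{T\bar F}$, while the left side tends in $L^q(X)$ to $T^*F$ (Lemma \ref{id} again; for $q=\infty$ one uses weak-$*$ convergence against $L^1$, the sequence being $L^\infty$-bounded by (\ref{eq:conalg})). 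Comparing limits yields $T^*F=\overline{T\bar F}$ for all $F\in C_c^\infty(X)$.

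From this the rest is formal. Condition 1 holds on radial $C_c^\infty$ functions because $\overline{T\bar\psi}$ is radial ($T$ and conjugation preserve radiality), and passes to all of $L^q_\sigma(X)$ by density and continuity (for $q=\infty$ by weak-$*$ density and weak-$*$ continuity of $T^*$). Condition 2 is the chain $T^*\psi*\phi=\overline{T\bar\psi}*\phi=\overline{T\bar\psi*\bar\phi}=\overline{\bar\psi*T\bar\phi}=\psi*\overline{T\bar\phi}=\psi*T^*\phi$, using (i), condition 2 for $T$ and commutativity of radial convolution. Condition 3 is $T^*\tau_x\phi=\overline{T(\tau_x\bar\phi)}=\overline{\tau_x(T\bar\phi)}=\tau_x\overline{T\bar\phi}=\tau_x T^*\phi$, using that conjugation commutes with $\tau_x$ and condition 3 for $T$. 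Hence $T^*$ is an $L^q$-multiplier, so by Proposition \ref{multiplier} and Remark \ref{bem:multifunction} it has a symbol $m_{T^*}$, holomorphic on the relevant strip, with $\widehat{T^*\phi}=m_{T^*}\hat\phi$ on radial $C_c^\infty$ functions. Finally, for such $\phi$ and $\lambda\in\R$, by (iii) and the symbol identity for $T$,
$$\widehat{T^*\phi}(\lambda)=\widehat{\overline{T\bar\phi}}(\lambda)=\overline{\widehat{T\bar\phi}(\bar\lambda)}=\overline{m_T(\bar\lambda)}\;\overline{\widehat{\bar\phi}(\bar\lambda)}=\overline{m_T(\bar\lambda)}\,\hat\phi(\lambda);$$
choosing for each $\lambda_0\in\R$ a radial $\phi\in C_c^\infty(X)$ supported near $\sigma$ with $\hat\phi(\lambda_0)\neq0$ (possible since $\varphi_{\lambda_0}(0)=1$) forces $m_{T^*}(\lambda_0)=\overline{m_T(\bar\lambda_0)}$, and since both $m_{T^*}$ and $\lambda\mapsto\overline{m_T(\bar\lambda)}$ are holomorphic on the (conjugation-symmetric) strip, they agree there, which is the claim.

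I expect the main difficulty to be analytic bookkeeping rather than the underlying algebra, which merely says that $L^p$-multipliers are symmetric for the bilinear form $\int_X fg$: one must check that every convolution above lands in a space making the pairings, the applications of Fubini and the Bochner integral legitimate — this is exactly what (\ref{eq:conalg}), Lemma \ref{young} and Proposition \ref{multiplier} supply, e.g. $\Psi*\overline{T\Phi}\in L^\infty(X)$ because $\tfrac1p+\tfrac1q=1$ — and one must treat the endpoint $p=1$, $q=\infty$ separately, where Lemma \ref{id} fails in $L^\infty$ and the limiting arguments must be run in the weak-$*$ topology, using weak-$*$ density of bounded radial compactly supported functions in $L^\infty_\sigma(X)$ and the weak-$*$ continuity of $T^*=(T|_{L^1})^*$.
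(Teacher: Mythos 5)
Your argument is correct, but it follows a genuinely different route from the paper's. The paper stays entirely on the Fourier side: for radial $\psi,\phi\in C_c^\infty(X)$ it writes $\langle T^*\psi,\phi\rangle=\langle\psi,T\phi\rangle$, applies the Plancherel theorem to obtain $\int_0^\infty\overline{m_T(\lambda)}\hat\psi(\lambda)\overline{\hat\phi(\lambda)}\,d\lambda$, uses surjectivity of the radial Fourier transform onto $L^2((0,\infty),C_0\lvert\mathbf{c}(\lambda)\rvert^{-2}d\lambda)$ to produce a radial $g\in L^2$ with $\hat g=\overline{m_T}\hat\psi$, identifies $g=T^*\psi$ by duality, and then extends $\overline{m_T(\lambda)}$ from $\R$ to the strip as $\overline{m_T(\bar\lambda)}$. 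You instead derive the explicit operator identity $T^*F=\overline{T\bar F}$ on $C_c^\infty(X)$ from the adjoint-of-convolution symmetry and an approximate identity, and read off the three multiplier axioms and the symbol by conjugation, using $\overline{\varphi_\lambda}=\varphi_{\bar\lambda}$. Your route is longer but buys real content: it makes explicit why $T^*$ preserves radial functions and satisfies conditions (2) and (3) of the multiplier definition --- points the paper compresses into the single sentence ``using the Fourier inversion formula we see that $T^*$ is an $L^q$-multiplier,'' and which are in fact needed even in the paper's argument, since pairing against radial test functions only identifies $T^*\psi$ with $g$ once one knows $T^*\psi$ is radial --- and it treats the endpoint $q=\infty$, which the Plancherel argument does not reach. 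Two small points of care: the Bochner-integral identity $T(F*\Phi)=F*T\Phi$ should be asserted only for $F\in L^1(X)\cap L^p(X)$ (e.g.\ $F\in C_c^\infty(X)$, which is all you ever use), since for general $F\in L^p(X)$ the integral $\int_X\lvert F(z)\rvert\,\lVert\tau_z\Phi\rVert_p\,dz$ need not converge; and the evaluation of $\widehat{\overline{T\bar\phi}}$ at non-real $\lambda$ requires $T\bar\phi\in L^q(X)$ together with Lemma \ref{fourierholo} so that the spherical transform is defined and holomorphic on the strip --- bookkeeping you flag yourself and which Proposition \ref{multiplier} supplies.
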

%
\begin{proof}
Let $\psi, \phi$ be radial $C^{\infty}_c$ functions around $\sigma\in X$ and $\langle\cdot,\cdot\rangle:L^q(X)\times L^p(X)\to \C$ the canonical dual pairing.
Then by the Plancherel theorem we obtain: 
\begin{align*}
\langle T^*\psi,\phi\rangle=\langle \psi, T\phi\rangle&=\int_X  \psi(x)\cdot \overline{T\phi (x)}\,dx\\
&=\int_0^{\infty}\hat{\psi}(\lambda) \cdot\overline{\widehat{T\phi}(\lambda)}\,d\lambda \\
&=\int_0^{\infty}\overline{m_T(\lambda)}\hat{\psi}(\lambda) \cdot\overline{\hat{\phi}(\lambda)}\,d\lambda.
\end{align*}
Since $T$ is bounded, so is $\overline{m_T(\lambda)}$ which implies that $\overline{m_T(\lambda)}\hat{\psi}$ is an $L^2$-function. Now by the above $\overline{m_T(\lambda)}$ can not be orthogonal to the range of the Fourier transform. Hence there exists a radial $L^2$-function $g$  on $X$ such that 
\begin{align*}
\hat{g}(\lambda)=\overline{m_T(\lambda)}\hat{\psi}(\lambda)\quad\forall \lambda\in \R.
\end{align*}
 Therefore, 
\begin{align*}
\int_X T^*\psi(x)\cdot\overline{\phi(x)}\,dx= \int_X g(x)\cdot \overline{\phi(x)}\,dx,
\end{align*}
 hence $g=T^*\psi$ by duality. 
Now using the Fourier inversion formula we see that $T^*:L^q(X) \to L^q(X)$ is an $L^q$-multiplier. Hence, $\overline{m_T(\lambda)}$ extents to a holomorphic function on a strip around the real line as does $m_T(\lambda)$. Therefore, the holomorphic extension is given by $\overline{m_T(\bar{\lambda})}$.
\end{proof}
The next proposition shines a light on the action of $L^p$-multipliers for $p>2$ on the radial eigenfunctions $\varphi_{\lambda,\sigma}$ for $\lambda\in S_p$.
 \begin{prop}[\cite{MR4443685}]\label{prop:varphieigenfunction}
 Let $1\leq q<2$ and $p>2$ be such that $\frac{1}{p}+\frac{1}{q}=1$. Let $T:L^p(X)\to L^p(X)$ be an $L^p$-multiplier. Then for all $\lambda\in S_p$ and $x\in X$ we have
 $$T\varphi_{\lambda,x}=m_T(\lambda)\varphi_{\lambda,x}.$$

 \end{prop}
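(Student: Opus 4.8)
The strategy is to realise $\varphi_{\lambda,x}$ as a simultaneous eigenvector for convolution by all radial $C_c^{\infty}$ functions, to use that an $L^p$-multiplier intertwines such convolutions by definition, and to read off the eigenvalue from $m_T$ via Proposition \ref{multiplier}. First fix $\lambda\in S_p$; by Lemma \ref{lemma2.4.9} we have $\varphi_{\lambda,x}\in L^p(X)$, so $T\varphi_{\lambda,x}\in L^p(X)$ is defined. For $\phi\in C_c^{\infty}(X)$ radial around $x$ one has $\varphi_{\lambda,x}*\phi\in L^p(X)$ by (\ref{eq:conalg}); moreover $T\phi$ is again radial around $x$ (by the relation $T\tau_x\phi=\tau_xT\phi$ in the definition of an $L^p$-multiplier) and lies in $L^q(X)$ by Proposition \ref{multiplier}, and then $\varphi_{\lambda,x}*(T\phi)$ converges pointwise and lies in $L^{\infty}(X)$ since $\lvert(\varphi_{\lambda,x}*T\phi)(y)\rvert\le\lVert\varphi_{\lambda,x}\rVert_p\,\lVert\tau_y(T\phi)\rVert_q=\lVert\varphi_{\lambda,x}\rVert_p\,\lVert T\phi\rVert_q$ by H\"older's inequality and the isometry of $\tau_y$ on radial $L^q$-functions.

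The heart of the matter is the reproducing identity
$$\varphi_{\lambda,x}*\phi=\widehat{\phi}(\lambda)\,\varphi_{\lambda,x}$$
for $\phi\in C_c^{\infty}(X)$ radial around $x$, where $\widehat{\phi}$ is the spherical Fourier transform (\ref{spherical}), an entire function of $\lambda$ on compactly supported radial profiles. To prove it, put $u:=\varphi_{\lambda,x}*\phi$. Writing $\phi=v\circ d_x$ with $v$ smooth and even, the kernel $(z,y)\mapsto v(d(z,y))$ is smooth on $X\times X$, so differentiating under the integral gives $u\in C^{\infty}(X)$, and $u$ is radial around $x$, being an $L^p$-limit of convolutions of compactly supported radial functions around $x$. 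Using the symmetry $d(z,y)=d(y,z)$ of this kernel together with Green's identity (valid since $\phi$ has compact support) one obtains $\Delta u=(\Delta\varphi_{\lambda,x})*\phi=-(\lambda^2+\rho^2)u$. Hence $u$ is a radial, smooth (in particular regular at $x$) eigenfunction of $\Delta$ with eigenvalue $-(\lambda^2+\rho^2)$; by uniqueness up to a scalar of the solution of $L_A u=-(\lambda^2+\rho^2)u$ regular at the singular point $r=0$, $u=c\,\varphi_{\lambda,x}$, and evaluating at $x$, where $\tau_x\phi=\phi$ and $\varphi_{\lambda,x}(x)=1$, gives $c=\int_X\phi(y)\varphi_{\lambda,x}(y)\,dy=\widehat{\phi}(\lambda)$. (Alternatively one may quote the product formula for the spherical functions of the underlying Ch\'ebli-Trim\'eche hypergroup.) Since radial $C_c^{\infty}$ functions around $x$ are dense in the radial $L^q$-functions around $x$, and both $h\mapsto\varphi_{\lambda,x}*h$ and $h\mapsto\int_X h\,\varphi_{\lambda,x}\,dy$ are continuous there by the bounds in the first paragraph, the identity extends to every radial $h\in L^q(X)$ around $x$, in particular to $h=T\phi$.

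Assembling the pieces: for $\phi\in C_c^{\infty}(X)$ radial around $x$, the defining relation $T\psi*\phi=\psi*T\phi$ of an $L^p$-multiplier, applied with $\psi=\varphi_{\lambda,x}\in L^p(X)$, gives $(T\varphi_{\lambda,x})*\phi=\varphi_{\lambda,x}*(T\phi)$, and by the extended reproducing identity and Proposition \ref{multiplier} this equals $\widehat{T\phi}(\lambda)\varphi_{\lambda,x}=m_T(\lambda)\widehat{\phi}(\lambda)\varphi_{\lambda,x}=m_T(\lambda)(\varphi_{\lambda,x}*\phi)$. Choosing $\phi=\psi_n$, a radial $C_c^{\infty}$ approximation of the identity centred at $x$ in the sense of Lemma \ref{id}, yields $(T\varphi_{\lambda,x})*\psi_n=m_T(\lambda)(\varphi_{\lambda,x}*\psi_n)$, and since $T\varphi_{\lambda,x},\varphi_{\lambda,x}\in L^p(X)$, Lemma \ref{id} lets us pass to the limit $n\to\infty$ in $L^p(X)$ on both sides, giving $T\varphi_{\lambda,x}=m_T(\lambda)\varphi_{\lambda,x}$ for all $x\in X$ and $\lambda\in S_p$.

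The step I expect to be the main obstacle is the rigorous proof of the reproducing identity — in particular establishing that $\varphi_{\lambda,x}*\phi$ is smooth up to and at the centre $x$ and that $\Delta$ commutes with this convolution — together with the bookkeeping of which functions are radial around which point and the verification that the multiplier relations, stated for compactly supported functions, genuinely apply to the non-compactly-supported eigenfunction $\varphi_{\lambda,x}\in L^p(X)$. Once the reproducing identity is in hand, the remaining algebra and the approximation-of-the-identity limit are routine, given Lemmas \ref{id} and \ref{lemma2.4.9} and Proposition \ref{multiplier}.
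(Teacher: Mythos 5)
Your argument is correct, and it is worth noting at the outset that the paper itself offers no proof of this proposition --- it is imported verbatim from \cite{MR4443685} --- so there is no in-text argument to compare against; your write-up is essentially a reconstruction of the standard proof from that reference. The skeleton is exactly right: (i) $\varphi_{\lambda,x}\in L^p(X)$ for $\lambda\in S_p$ by Lemma \ref{lemma2.4.9}; (ii) the reproducing identity $\varphi_{\lambda,x}*\phi=\widehat{\phi}(\lambda)\varphi_{\lambda,x}$, extended by density to radial $h\in L^q(X)$ so that it applies to $h=T\phi$; (iii) the intertwining axiom $T\psi*\phi=\psi*T\phi$ with $\psi=\varphi_{\lambda,x}$ together with $\widehat{T\phi}(\lambda)=m_T(\lambda)\widehat{\phi}(\lambda)$ from Proposition \ref{multiplier}; (iv) an approximation of the identity via Lemma \ref{id} to strip off the test function. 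Your PDE route to the reproducing identity (smoothness of the convolution, symmetry of $\Delta_y v(d(y,z))=\Delta_z v(d(y,z))$ from harmonicity, Green's identity, and uniqueness of the radial eigenfunction regular at $r=0$) is sound; an equivalent and shorter route in the spirit of this paper is to insert the integral representation (\ref{radialeigen}) and use the radiality of $\widetilde{\phi}^{x}(\lambda,\cdot)\equiv\widehat{\phi}(\lambda)$, as in the computation inside the proof of Lemma \ref{lemma:holo}. The only genuine point of care, which you correctly flag, is that axiom (2) of the multiplier definition is being invoked for the non-compactly-supported $\psi=\varphi_{\lambda,x}$; as the definition is stated for all radial functions for which both convolutions make sense (and here both sides converge, by (\ref{eq:conalg}) on the left and H\"older with $T\phi\in L^q(X)$ on the right), this is legitimate, but it is the one place where a stricter reading of the definition would force an additional truncation-and-limit step. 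No gap beyond that.
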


 \subsection{Chaotic Behaviour of $L^p$-multipliers for $p>2$.}
 In this section, we want to showcase the results from \cite{MR4443685} saying that the action given by the iteration of $T, $ $$T^{k}:\N\times L^p(X) \to L^p(X)$$ is chaotic in the sense of Devaney for $p>2$. 
For that purpose let us briefly recall the notion of chaos and state a lemma that is going to be of great significance in the proof of Theorem \ref{chaotic1}.
For a thorough discussion of the topic of liner chaos, we refer the reader to \cite{grosse2011linear}.
 \begin{defi}
 Let 
 $M$ be a separable Banach space over $\C$  and $A:M\to M$ be a bounded linear operator  such that $A:\N\times M\to M$ with $(n,m)\mapsto A^n(m)$ defines a dynamical system. The action of $A$ in $M$ is called chaotic if:
 \begin{enumerate}
 \item  It is topologically transitive. Namely, if for all $U,V$ non empty open subsets of $M$ there exist an $N\in\N$ such that $$A^N(U)\cap V\neq \emptyset.$$
 \item It has a  dense set of periodic points. 
 \end{enumerate}
 \end{defi}
\begin{bem}
Note that the definition of chaos can also be adapted to the action of semi-groups, and all our results hold true in this case as well by the usual adaptations from discrete to continuous dynamical systems. 
\end{bem}
 At this point let us remark that the butterfly effect, meaning sensible dependence on initial conditions, is already implied by the conditions above. See for instance Theorem 1.2.9 in \cite{grosse2011linear}. Furthermore by Birkhoff's transitivity theorem hypercyclicity, meaning the action above 
 having dense orbits is equivalent to topological transitivity. 
 The following theorem by Godfey and Shapiro 

 gives a useful characterisation of the topologically transitive action of operators on Banach spaces. 
 \begin{satz}[\cite{grosse2011linear} Theorem 3.1 ]\label{prop:eigenfunctions}

  \label{Godfrey}
 Let $M$ be a separable Banach space over $\C$  and let $A:M\to M$ be a continuous linear 
 operator. Suppose the subsets $M^+,M^-$ defined by 
 \begin{align*}
 M^+&=\operatorname{Span}\{v\in M\mid Av=\lambda v,\lambda \in  \C,\lvert \lambda\rvert>1\}\\
M^-&=\operatorname{Span}\{v\in M\mid Av=\lambda v,\lambda \in  \C,\lvert \lambda\rvert<1\}
\end{align*}
are dense in M. Then the action of $T$ on M, given by iteration,  is mixing i.e.  there exists an $N\in\N$ such that 
\begin{align*}A^n(U)\cap V\neq \emptyset \quad \forall n\geq N, 
\end{align*}
and especially the action is topologically transitive.
\end{satz}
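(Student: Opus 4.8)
The plan is to deduce the statement from the classical sufficient condition for mixing, a mild strengthening of the Hypercyclicity Criterion sometimes attributed to Kitai. First I would record the following elementary lemma: if $A\colon M\to M$ is a continuous linear operator, $X_{0},Y_{0}\subseteq M$ are dense subsets, and $S_{n}\colon Y_{0}\to M$ are (not necessarily continuous) maps such that $A^{n}x\to 0$ for every $x\in X_{0}$, $S_{n}y\to 0$ for every $y\in Y_{0}$, and $A^{n}S_{n}y\to y$ for every $y\in Y_{0}$, then $A$ is mixing. The proof of the lemma is short: given non-empty open sets $U,V\subseteq M$, choose $x\in X_{0}\cap U$ and $y\in Y_{0}\cap V$ by density and put $z_{n}=x+S_{n}y$. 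Then $z_{n}\to x$, so $z_{n}\in U$ for all large $n$, while $A^{n}z_{n}=A^{n}x+A^{n}S_{n}y\to 0+y=y$, so $A^{n}z_{n}\in V$ for all large $n$; hence $A^{n}z_{n}\in A^{n}(U)\cap V$ for every $n$ beyond some $N$, which is exactly the mixing property as formulated in the statement.

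Next I would apply the lemma with $X_{0}=M^{-}$ and $Y_{0}=M^{+}$, both dense by hypothesis. Writing $E_{\mu}=\ker(A-\mu\,\id)$, I would recall that eigenspaces attached to pairwise distinct eigenvalues are in direct sum, so every $y\in M^{+}$ has a unique finite decomposition $y=\sum_{|\mu|>1}y_{\mu}$ with $y_{\mu}\in E_{\mu}$; this makes $S_{n}y:=\sum_{|\mu|>1}\mu^{-n}y_{\mu}$ a well-defined linear map on $M^{+}$. The three hypotheses of the lemma are then immediate: for $x=\sum_{|\lambda|<1}x_{\lambda}\in M^{-}$ one has $A^{n}x=\sum\lambda^{n}x_{\lambda}\to 0$ since each $|\lambda|<1$ and the sum is finite; for $y\in M^{+}$ one has $S_{n}y=\sum\mu^{-n}y_{\mu}\to 0$ since each $|\mu|>1$; and $A^{n}S_{n}y=\sum\mu^{-n}\mu^{n}y_{\mu}=y$ exactly. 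Therefore $A$ is mixing, and in particular topologically transitive, which is the assertion.

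I do not expect a genuine obstacle here, since the statement is an instance of a well-known criterion; the only points requiring a little care are (i) the well-definedness of $S_{n}$, which rests on the linear independence of eigenvectors for distinct eigenvalues, and (ii) the bookkeeping that one must use a single $n$-dependent vector $z_{n}$ lying in $U$ whose image $A^{n}z_{n}$ lands in $V$, rather than two unrelated approximation arguments. If one preferred to keep the proof self-contained and avoid quoting the mixing lemma, one could simply unfold its proof directly in the eigenvector setting described above, which reduces to the same computation.
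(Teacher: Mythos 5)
Your argument is correct and is essentially the standard proof of the Godfrey--Shapiro criterion via the mixing form of the Hypercyclicity Criterion, with $S_n$ defined on $M^+$ through the direct-sum decomposition into eigenspaces; the paper itself gives no proof here but cites Theorem 3.1 of \cite{grosse2011linear}, where exactly this route is taken. The two points you flag as needing care (well-definedness of $S_n$ and the single $n$-dependent vector $z_n=x+S_ny$) are indeed the only delicate steps, and you handle both correctly.
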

\begin{bem}
Note at this point that for the notion of chaos, the dimension of the underlying space has to be infinite since there are no  topologically transitive 
linear operators on finite dimensional vector spaces \cite[Theorem 2.5.8]{grosse2011linear}.
\end{bem}
\begin{lemma}[\cite{MR4443685}]\label{lemma:dens}
Let $2<p<\infty ,\sigma\in X$ and $K\subset S_p$ such that $K$ has a limit point in $S_p$, then 
\begin{align*}
V:=\operatorname{Span}\{\tau_x\varphi_{\lambda,\sigma}\mid x\in X,\lambda\in K\}
\end{align*}
is dense in $L^p(X)$.
\end{lemma}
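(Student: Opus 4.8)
The plan is to prove density by duality. Since $2<p<\infty$, the dual of $L^p(X)$ is $L^q(X)$ with $1<q<2$ and $1/p+1/q=1$, so by Hahn--Banach it suffices to show that every $g\in L^q(X)$ which annihilates $V$ must vanish. Writing $\tau_x\varphi_{\lambda,\sigma}=\varphi_{\lambda,x}$ (see (\ref{defi:ef})) and recalling from Lemma \ref{lemma2.4.9} that $\varphi_{\lambda,x}\in L^p(X)$ for $\lambda\in S_p$, in particular for real $\lambda$, the quantity $\hat g_x(\lambda):=\int_X g(y)\varphi_{\lambda,x}(y)\,dy$ is well defined, and the annihilation hypothesis reads $\hat g_x(\lambda)=0$ for all $x\in X$ and all $\lambda\in K$ (if the sesquilinear pairing is used this becomes $\hat g_x(\bar\lambda)=0$, which is harmless since $\overline K$ again has a limit point in $S_p$).

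Next I would fix $x$ and invoke Lemma \ref{fourierholo}: the function $\lambda\mapsto\hat g_x(\lambda)$ extends holomorphically to the strip $S_p$. Since $S_p$ is connected and $\hat g_x$ vanishes on $K$, which by hypothesis has a limit point in $S_p$, the identity theorem forces $\hat g_x\equiv 0$ on $S_p$; in particular $\hat g_x(\lambda)=0$ for all $\lambda\in\R$ and all $x\in X$. This is exactly the ``no-eigenfunction for $p\le 2$'' mechanism at work: pairing an $L^q$-function against the $L^p$-bounded family $\{\varphi_{\lambda,x}\}_{\lambda\in S_p}$ yields something holomorphic, so vanishing on a small set propagates to the whole strip.

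To finish, I would convert this into a statement about convolution with radial test functions. For radial $\phi\in C_c^\infty(X)$, the $x$-translate $\tau_x\phi$ is radial around $x$ with the same radial Fourier transform $\widehat\phi$, so the radial inversion formula (\ref{fourad}) gives $(\tau_x\phi)(y)=C_0\int_0^\infty\widehat\phi(\lambda)\,\varphi_{\lambda,x}(y)\,|\mathbf c(\lambda)|^{-2}\,d\lambda$. Multiplying by $g$ and integrating over $X$, and interchanging the integrals — legitimate because $\int_X|g(y)||\varphi_{\lambda,x}(y)|\,dy\le\|g\|_q\,\|\varphi_{0,\sigma}\|_p$ is finite and uniformly bounded in $\lambda\in\R$ by Lemma \ref{lemma2.4.9}, while $\widehat\phi\in L^1((0,\infty),|\mathbf c(\lambda)|^{-2}\,d\lambda)$ for $\phi\in C_c^\infty$ — we obtain
$$(g*\phi)(x)=C_0\int_0^\infty\widehat\phi(\lambda)\,\hat g_x(\lambda)\,|\mathbf c(\lambda)|^{-2}\,d\lambda=0 .$$
Applying this with $\phi=\psi_n$ a radial $C_c^\infty$ approximate identity as in Lemma \ref{id} and letting $n\to\infty$ gives $g=\lim_{n\to\infty}g*\psi_n=0$, contradicting $g\neq 0$; hence $V$ is dense in $L^p(X)$.

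I expect the main obstacle to be the third step: one must justify that the radial inversion formula (\ref{fourad}) may be inserted under the integral against $g$, i.e.\ that $\widehat\phi$ is integrable against the Plancherel density $|\mathbf c(\lambda)|^{-2}\,d\lambda$ and that the resulting double integral converges absolutely. The uniform bound $\|\varphi_{\lambda,\sigma}\|_p\le\|\varphi_{0,\sigma}\|_p$ for real $\lambda$ supplied by Lemma \ref{lemma2.4.9} is precisely what makes the Fubini argument go through; the remaining ingredients, Hahn--Banach and the identity theorem on a strip, are routine.
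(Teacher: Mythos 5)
The paper itself states Lemma \ref{lemma:dens} without proof, citing \cite{MR4443685}; your argument --- Hahn--Banach duality, the identification $\tau_x\varphi_{\lambda,\sigma}=\varphi_{\lambda,x}$, holomorphic extension of $\lambda\mapsto\hat g_x(\lambda)$ to $S_p$ via Lemma \ref{fourierholo}, the identity theorem, and then $g*\psi_n\to g$ with a radial $C_c^\infty$ approximate identity --- is correct and is essentially the proof given in that reference. The points you flag (the Fubini interchange justified by $\lVert\varphi_{\lambda,x}\rVert_p\le\lVert\varphi_{0,x}\rVert_p$ for real $\lambda$ and the integrability of $\widehat\phi$ against $\lvert\mathbf c(\lambda)\rvert^{-2}\,d\lambda$, and the conjugate-pairing issue handled by symmetry of $S_p$) are exactly the right ones and are handled adequately.
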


\begin{lemma}[\cite{MR4443685}]\label{lemma:nonconstant}
Let $2<p<\infty$ and $T$  an $L^p$-multiplier that is not a multiple of the identity, then the symbol $m_T$ is not constant. 
\end{lemma}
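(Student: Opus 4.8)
\textbf{Proof plan for Lemma \ref{lemma:nonconstant}.}
The plan is to argue by contraposition: I will show that if the symbol $m_T$ is constant, then $T$ must be a multiple of the identity. So suppose $m_T \equiv c$ for some constant $c \in \C$. By Proposition \ref{multiplier}, for every radial $\phi \in C_c^\infty(X)$ we have $\widehat{T\phi}(\lambda) = m_T(\lambda)\hat\phi(\lambda) = c\,\hat\phi(\lambda) = \widehat{c\phi}(\lambda)$ for all $\lambda \in S_p$, in particular for all $\lambda \in \R$. Since the spherical Fourier transform is injective on radial $L^2$ functions (by the Plancherel theorem, Theorem \ref{Plancherel Theorem}, combined with the inversion formula (\ref{fourad})) and $T\phi \in L^q(X) \subset$ (locally) the relevant space, it follows that $T\phi = c\phi$ for every radial $\phi \in C_c^\infty(X)$.

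The remaining step is to upgrade this identity from radial $C_c^\infty$ functions to all of $L^p(X)$. First I would extend it to arbitrary $\phi \in C_c^\infty(X)$: using property (3) in the definition of an $L^p$-multiplier, $T\tau_x\phi = \tau_x T\phi$ for radial $\phi$ and $x \in X$, so $T$ agrees with $c\cdot\id$ on all translates $\tau_x\phi$ of radial test functions; then Lemma \ref{lemma:dens} (applied with, say, $K$ any subset of $S_p$ with a limit point, or more simply the analogous density statement for translates of radial bump functions) shows these translates span a dense subspace of $L^p(X)$. Alternatively, and perhaps more cleanly, I would use property (2), $T\psi * \phi = \psi * T\phi$ for radial $\psi,\phi$: taking $\psi = \psi_n$ a radial approximate identity as in Lemma \ref{id}, for any $f \in C_c^\infty(X)$ we have $T(f * \psi_n) = f * T\psi_n = f * (c\psi_n) = c(f*\psi_n)$, and since $f * \psi_n \to f$ in $L^p(X)$ by Lemma \ref{id} while $T$ is bounded, we get $Tf = cf$. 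Density of $C_c^\infty(X)$ in $L^p(X)$ and boundedness of $T$ then give $T = c\cdot\id$ on all of $L^p(X)$, contradicting the hypothesis that $T$ is not a multiple of the identity.

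The main obstacle, such as it is, is making sure the injectivity of the Fourier transform is applied in a setting where it is licit: $T\phi$ lies in $L^q(X)$ with $q < 2$ rather than in $L^2(X)$, so one cannot directly invoke the $L^2$ Plancherel isometry. This is handled by noting that the identity $\widehat{T\phi}(\lambda) = c\,\hat\phi(\lambda)$ holds for the spherical transform extended to $L^1 \cap L^q$ (or by the holomorphic extension in Lemma \ref{fourierholo}), and that two functions in $L^q_\sigma(X)$, $q \le 2$, with the same spherical Fourier transform on $\R$ must coincide — this follows from the density of $C_c^\infty$ and the fact that $c\phi$ is a fixed $L^2$ radial function whose transform determines it, so $T\phi - c\phi$ is a radial $L^q$ function with vanishing Fourier transform, hence zero. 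I would spell this out in one or two lines. Everything else is a routine density-and-continuity argument, so the lemma is genuinely elementary, as the paper indicates.
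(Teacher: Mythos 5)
Your argument is correct in substance but follows a genuinely different route from the paper's. The paper's proof is a two-line affair built on machinery it has already set up: by Proposition \ref{prop:varphieigenfunction}, $T\varphi_{\lambda,x}=m_T(\lambda)\varphi_{\lambda,x}=C\varphi_{\lambda,x}$ for all $\lambda\in S_p$ and $x\in X$, and by Lemma \ref{lemma:dens} the span of these translated eigenfunctions is dense in $L^p(X)$, so $T=C\operatorname{id}$ by boundedness. You instead work at the level of test functions: Proposition \ref{multiplier} plus injectivity of the spherical Fourier transform gives $T\phi=c\phi$ for radial $\phi\in C^{\infty}_c(X)$, and then an approximate-identity argument ($T(f*\psi_n)=f*T\psi_n=c(f*\psi_n)\to cf$ while $T(f*\psi_n)\to Tf$) upgrades this to all of $L^p(X)$. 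What the paper's route buys is brevity and the avoidance of any injectivity discussion; what your route buys is independence from Lemma \ref{lemma:dens} (a nontrivial density statement for eigenfunction translates) and from the eigenfunction identity, at the cost of two points you should tighten. First, the identity $T(f*\psi_n)=f*T\psi_n$ for \emph{non-radial} $f$ is not an instance of property (2), which only covers radial $\psi,\phi$; it follows from property (3) by writing $f*\psi_n=\int_X f(y)\tau_y\psi_n\,dy$ as a Bochner integral in $L^p(X)$ and commuting $T$ past the integral. Second, your justification of injectivity of the spherical transform on radial $L^q$ functions ($q<2$) is stated somewhat circularly; the clean fix is to convolve the difference $h=T\phi-c\phi$ with a radial approximate identity, note $\widehat{h*\psi_n}=\hat h\cdot\widehat{\psi_n}=0$ with $h*\psi_n\in L^2$, conclude $h*\psi_n=0$ by Plancherel, and let $n\to\infty$. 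With those two repairs your proof is complete.
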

\begin{proof}
Suppose we have an $L^p$-multiplier $T$ such that $m_T=C$ for some constant $C\in \C$. Choose $K\subset S_p$ and define $V$ as in Lemma \ref{lemma:dens}.
 Then by Proposition \ref{prop:varphieigenfunction}
 \begin{align}
 T\varphi_{\lambda,x}= C\varphi_{\lambda,x}
 \end{align}
 for all $\lambda\in S_p$ and $x\in X$. Hence, $T=C\operatorname{id}$ on $V$. Then by the density of $V$ in $L^p(X)$, we also have $T=C\operatorname{id}$ on the whole of $L^p(X)$. Hence for every $L^p$-multiplier that is not a multiple of the identity $m_T$ is not constant. 
\end{proof}

\begin{satz}[\cite{MR4443685}]\label{chaotic1}
Let $X$ be a simply connected harmonic manifold of rank one with mean curvature of the horopheres $2\rho$. Let $2<p<\infty$ and let $T:L^p(X)\to L^p(X)$ be a $L^p$-multiplier with symbol $m_T$ such that $T$ is not a scalar multiple of the identity. Where the symbol is defined by
$$
\widetilde{Tf}^{\sigma}(\lambda,\xi)=m_T(\lambda)\widetilde{f}^{\sigma}(\lambda,\xi)\quad\
$$
for all smooth radial functions $f$ with compact support. 
Then for all $\lambda\in S_p:=\{\lambda\in \C\mid \lvert \operatorname{Im}\lambda\rvert<(1-2/p)\rho\}$ for which $m_T(\lambda)\neq 0$ and for any $\nu\in \C$ for which $\lvert \nu\rvert =\lvert m_T(\lambda)\rvert$ the dynamics of the operator $\frac{1}{\nu}T$ on $L^p(X)$ are chaotic in the sense of Devaney. 
\end{satz}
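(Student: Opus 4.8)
The plan is to verify the hypotheses of Theorem~\ref{Godfrey}, together with its standard companion for periodic points (see, e.g., \cite{grosse2011linear}), for the bounded operator $A:=\frac{1}{\nu}T$ on the separable infinite-dimensional Banach space $L^p(X)$; this is well defined since $\nu\neq 0$. By Proposition~\ref{prop:varphieigenfunction}, for every $\mu\in S_p$ and every $x\in X$ the function $\tau_x\varphi_{\mu,\sigma}=\varphi_{\mu,x}$ lies in $L^p(X)$ and satisfies $A\varphi_{\mu,x}=\bigl(m_T(\mu)/\nu\bigr)\varphi_{\mu,x}$. Put
$$K^{+}:=\{\mu\in S_p\mid |m_T(\mu)|>|\nu|\},\qquad K^{-}:=\{\mu\in S_p\mid |m_T(\mu)|<|\nu|\},$$
$$K^{0}:=\{\mu\in S_p\mid m_T(\mu)/\nu\ \text{is a root of unity}\}.$$
Then the spaces $M^{+}$ and $M^{-}$ of Theorem~\ref{Godfrey} for $A$ contain $\operatorname{Span}\{\tau_x\varphi_{\mu,\sigma}\mid x\in X,\ \mu\in K^{+}\}$ and $\operatorname{Span}\{\tau_x\varphi_{\mu,\sigma}\mid x\in X,\ \mu\in K^{-}\}$ respectively, and the linear span of the eigenvectors of $A$ whose eigenvalue is a root of unity contains $\operatorname{Span}\{\tau_x\varphi_{\mu,\sigma}\mid x\in X,\ \mu\in K^{0}\}$. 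By Lemma~\ref{lemma:dens} it therefore suffices to show that each of $K^{+}$, $K^{-}$, $K^{0}$ has a limit point lying in $S_p$: then $M^{+}$ and $M^{-}$ are dense, so $A$ is mixing and in particular topologically transitive, and the span of the root-of-unity eigenvectors is dense, which gives a dense set of periodic points because any finite linear combination of such eigenvectors is periodic (apply $A^{N}$ with $N$ the least common multiple of the orders of the eigenvalues involved). Hence $A$ is chaotic in the sense of Devaney.

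For $K^{+}$ and $K^{-}$: by Proposition~\ref{multiplier} the symbol $m_T$ is holomorphic on the connected strip $S_p$, and by Lemma~\ref{lemma:nonconstant} it is non-constant, since $T$ is not a scalar multiple of the identity; by the identity theorem it is non-constant on every open disc about $\lambda$. The open mapping theorem then shows that $m_T$ maps such a disc onto an open neighbourhood of $m_T(\lambda)$, which, since $|m_T(\lambda)|=|\nu|\neq 0$, contains points of modulus strictly greater and strictly smaller than $|\nu|$. Hence $K^{+}$ and $K^{-}$ are non-empty; they are open by continuity of $m_T$, and a non-empty open subset of $S_p$ certainly has limit points in $S_p$.

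The step I expect to be the main obstacle is the analogue for $K^{0}$. Write $h:=m_T/\nu$, which is holomorphic, non-vanishing and non-constant near $\lambda$, with $|h(\lambda)|=1$, and set $C:=\{\,|m_T|=|\nu|\,\}=\{\,\log|h|=0\,\}$. Since $\log|h|$ is a non-constant harmonic function near $\lambda$, it has no strict local extremum, and a closer look at the local normal form $h(\mu)=h(\lambda)\bigl(1+a(\mu-\lambda)^{k}+o(|\mu-\lambda|^{k})\bigr)$, $a\neq 0$, $k\geq 1$, shows that $C$ near $\lambda$ is a union of finitely many real-analytic arcs through $\lambda$ (obtained by the implicit function theorem from the $2k$ rays on which $\operatorname{Re}(\overline{h(\lambda)}\,a(\mu-\lambda)^{k})=0$). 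Fix one such arc $\gamma$. Along $\gamma$ the function $\arg h$ is real-analytic; it is non-constant, for $\arg h\equiv\arg h(\lambda)$ on $\gamma$ would force $m_T$ to be constant on $\gamma$, hence, by the identity theorem, constant on $S_p$, contradicting Lemma~\ref{lemma:nonconstant}. A non-constant real-analytic function on an interval attains every value in some open subinterval, so $\arg h$ lies in $2\pi\Q$ on a subset of $\gamma$ accumulating at $\lambda$; at each such point $h$ has modulus $1$ and argument in $2\pi\Q$, hence equals a root of unity. These points lie in $K^{0}$ and accumulate at $\lambda\in S_p$, as needed.

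With all three limit-point statements established, Lemma~\ref{lemma:dens} makes $M^{+}$, $M^{-}$ and the span of the root-of-unity eigenvectors of $A$ dense, and the conclusion follows as described in the first paragraph (applying Theorem~\ref{Godfrey} with arbitrary non-empty open $U,V\subseteq L^p(X)$). The infinite-dimensionality of $L^p(X)$, needed for chaos to be possible at all, is clear since $X$ is a non-compact manifold.
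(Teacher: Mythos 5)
Your argument is correct and follows exactly the route the paper sets up for this cited result: the Godefroy--Shapiro criterion (Theorem~\ref{Godfrey}) combined with Lemma~\ref{lemma:dens}, Lemma~\ref{lemma:nonconstant} and Proposition~\ref{prop:varphieigenfunction}. (The paper only states the theorem and defers the proof to \cite{MR4443685}, but these are precisely the ingredients it assembles; your level-curve analysis for $K^{0}$ is valid though heavier than needed, since the open mapping theorem already yields preimages of the dense subset $\{\nu\zeta:\zeta\ \text{a root of unity}\}$ of the circle $\{|z|=|\nu|\}$ in every small disc about $\lambda$, giving the required accumulation at $\lambda$ directly.)
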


\subsection{Non Chaotic Behaviour of $L^p$-multipliers for $p\leq2$}

After showcasing the arguments for the fact that suitably scaled $L^p$-multipliers exhibit a chaotic behaviour for $p>2$ we now show that this bound is sharp under the assumption that the bounded linear operator $T:L^p(X)\to L^p(X)$ acts on the $C^{\infty}_c$ functions by multiplication with a holomorphic, for $p<2$, or continuous, for $p=2$ function $m_{T}(\lambda)$ under Fourier transformation.
For this, we first need to discuss the domain of holomorphy of the Fourier transform for $L^p$-function with $1\leq p<2$ which is the main argument for the proof of Theorem \ref{thm:L^ppleq2} in this range. The case $p=2$ will be dealt with separately using the Plancherel theorem for the Fourier transform. 
\begin{bem}
In this section, it will be more convenient to describe the topological transitivity of a linear operator $A$  by the equivalent property of $A$ having a dense orbit. Such an operator is then called hypercyclic and an element with a dens orbit is called hypercyclic vector. The equivalence of transitivity and hypercyclicity is proven in Birkhoff's transitivity theorem see  \cite[Theorem 2.19]{grosse2011linear}.
\end{bem}

\begin{lemma}\label{lemma:holo2}
Let $\sigma\in X$ and for $1\leq p<2$ let $f\in L^p(X)$ and $\frac{1}{p}+\frac{1}{q}=1$. Then
\begin{align*}
\widetilde{f}^{\sigma}(\lambda,\xi)=\int_X f(x)e^{(-i\lambda-\rho)B_{\xi,\sigma}(x) }\,dx
\end{align*}
exists for all $\lambda \in S_q$ and almost every $\xi\in\partial X$ and is holomorphic on $S_q$. Where the set of full measure in $\partial X$ on which $\widetilde{f}^{\sigma}$ is defined only depends on $f$. 
\end{lemma}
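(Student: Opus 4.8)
The statement is governed entirely by the absolute value of the integrand: since $\lvert e^{(-i\lambda-\rho)B_{\xi,\sigma}(x)}\rvert=e^{(\operatorname{Im}\lambda-\rho)B_{\xi,\sigma}(x)}$, writing $t=\operatorname{Im}\lambda$ and
\[
I_t(\xi):=\int_X\lvert f(x)\rvert\,e^{(t-\rho)B_{\xi,\sigma}(x)}\,dx ,
\]
the plan is to show that $I_t(\xi)<\infty$ for every $\xi$ in one fixed full-measure set $E\subset\partial X$ and every $\lvert t\rvert<\gamma_q\rho$ — which is exactly absolute convergence of $\widetilde f^{\sigma}(\lambda,\xi)$ on $S_q$ — and then that the same majorant produces holomorphy in $\lambda$. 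This breaks into (i) a fixed-$t$ estimate, (ii) removing the $\lambda$-dependence of the exceptional set, and (iii) a Morera argument.

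For (i), fix $t$ with $\lvert t\rvert<\gamma_q\rho$. The integrand $(x,\xi)\mapsto\lvert f(x)\rvert e^{(t-\rho)B_{\xi,\sigma}(x)}$ is non-negative and measurable, so Tonelli's theorem together with the integral representation (\ref{radialeigen}) of the radial eigenfunction based at $\sigma$ gives
\[
\int_{\partial X}I_t(\xi)\,d\mu_\sigma(\xi)=\int_X\lvert f(x)\rvert\Bigl(\int_{\partial X}e^{(t-\rho)B_{\xi,\sigma}(x)}\,d\mu_\sigma(\xi)\Bigr)dx=\int_X\lvert f(x)\rvert\,\varphi_{it,\sigma}(x)\,dx\le\lVert f\rVert_p\,\lVert\varphi_{it,\sigma}\rVert_q ,
\]
using $\varphi_{-\mu}=\varphi_\mu$ for the middle equality and Hölder's inequality for the last step. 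For $1<p<2$ one has $q>2$ and $\lVert\varphi_{it,\sigma}\rVert_q<\infty$ by Lemma \ref{lemma2.4.9}; for $p=1$ one has $q=\infty$, and the asymptotics $\Phi_{\pm\mu}(r)=e^{(\pm i\mu-\rho)r}(1+o(1))$ of (\ref{eigende}) force $\varphi_{it}(r)\to 0$ as $r\to\infty$ when $\lvert t\rvert<\rho$, so $\lVert\varphi_{it,\sigma}\rVert_\infty<\infty$. Either way $I_t(\xi)<\infty$ for $\mu_\sigma$-almost every $\xi$.

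For (ii), I would use the elementary bound $e^{tB}\le e^{t_0B}+e^{-t_0B}$, valid for all $B\in\R$ and $\lvert t\rvert\le t_0$ (check $B\ge 0$ and $B<0$ separately), which gives $I_t(\xi)\le I_{t_0}(\xi)+I_{-t_0}(\xi)$ for $\lvert t\rvert\le t_0$. By step (i) the set $E_{t_0}:=\{\xi:I_{t_0}(\xi)+I_{-t_0}(\xi)<\infty\}$ has full measure whenever $t_0<\gamma_q\rho$; taking $t_0^{(n)}\uparrow\gamma_q\rho$ and $E:=\bigcap_nE_{t_0^{(n)}}$ produces a full-measure set, visibly depending only on $\lvert f\rvert$ and the fixed data $\sigma,\{B_{\xi,\sigma}\}$, on which $I_t(\xi)<\infty$ for all $\lvert t\rvert<\gamma_q\rho$, i.e. $\widetilde f^{\sigma}(\lambda,\xi)$ converges absolutely for all $\lambda\in S_q$, $\xi\in E$. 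For (iii), fix $\xi\in E$ and a relatively compact open $U\subset S_q$ with $t_0:=\sup_{\lambda\in U}\lvert\operatorname{Im}\lambda\rvert<\gamma_q\rho$. On $U$ the integrand is dominated by the $x$-integrable function $\lvert f(x)\rvert\bigl(e^{(t_0-\rho)B_{\xi,\sigma}(x)}+e^{(-t_0-\rho)B_{\xi,\sigma}(x)}\bigr)$ (integral $I_{t_0}(\xi)+I_{-t_0}(\xi)$), independent of $\lambda$, while $\lambda\mapsto f(x)e^{(-i\lambda-\rho)B_{\xi,\sigma}(x)}$ is entire; dominated convergence gives continuity of $\lambda\mapsto\widetilde f^{\sigma}(\lambda,\xi)$ on $U$, and for any triangle $\Gamma\subset U$, Fubini (justified by the same majorant) and Cauchy's theorem give $\oint_\Gamma\widetilde f^{\sigma}(\lambda,\xi)\,d\lambda=\int_Xf(x)\bigl(\oint_\Gamma e^{(-i\lambda-\rho)B_{\xi,\sigma}(x)}\,d\lambda\bigr)dx=0$, so Morera's theorem yields holomorphy on $U$, hence on $S_q$.

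The only delicate point is step (ii): step (i) a priori only controls the integral for a prescribed imaginary part, and the last sentence of the statement demands an exceptional set independent of $\lambda$; it is the convexity bound $e^{tB}\le e^{t_0B}+e^{-t_0B}$ together with the countable exhaustion $t_0^{(n)}\uparrow\gamma_q\rho$ that makes this work. Everything else is a routine combination of (\ref{radialeigen}), Hölder's inequality, and Lemma \ref{lemma2.4.9}, with the $p=1$ case handled by the boundedness of $\varphi_{it,\sigma}$ read off from the exponential asymptotics.
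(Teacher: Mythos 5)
Your proof follows essentially the same route as the paper's: Tonelli together with the integral representation (\ref{radialeigen}) reduces the question to the bound $\lVert f\rVert_p\,\lVert\varphi_{it,\sigma}\rVert_q$ via H\"older and Lemma \ref{lemma2.4.9}, and holomorphy is then obtained from Morera's theorem. Your step (ii) — the bound $e^{tB}\le e^{t_0B}+e^{-t_0B}$ with a countable exhaustion $t_0^{(n)}\uparrow\gamma_q\rho$ to make the exceptional set independent of $\lambda$ — and your separate treatment of the endpoint $p=1$ (where $q=\infty$ and the boundedness of $\varphi_{it,\sigma}$ must be read off from the exponential asymptotics) are careful refinements of points the paper's proof leaves implicit.
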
 
\begin{proof}
Fix $\sigma\in X$ then we have by the definition of the Fourier transform: 
\begin{align}\label{eq:holo}
\int_{\partial X} \lvert \widetilde{f}^{\sigma}(\lambda,\xi)\rvert \,d\mu_{\sigma}(\xi) \leq \int_X \lvert f(x) \rvert \int_{\partial X} e^{(-i\lambda-\rho)B_{\xi,\sigma}(x)}\,d\mu_{\sigma}(\xi) \,dx.
\end{align}
Additionally we have by the representation of the eigenfunctions: 
\begin{align*}
\int_{\partial X} e^{(-i\lambda-\rho)B_{\xi,\sigma}(x)}\,d\mu_{\sigma}(\xi)=\varphi_{-\lambda,\sigma}=\varphi_{\lambda,\sigma}.
\end{align*}
Now  by Lemma \ref{lemma2.4.9} we have for $\lambda\in S_q$ 
and  $t=\operatorname{Im}\lambda$, 
\begin{align*}
\lVert \varphi_{\lambda,x}\rVert_q\leq \lVert \varphi_{it,x}\rVert_q<\infty.
\end{align*}
Therefore we conclude, with the H\"older inequality, that (\ref{eq:holo}) is bounded by 
\begin{align*}
\lVert f\rVert_p\cdot\lVert \varphi_{\lambda,\sigma}\rVert_q<\infty,
\end{align*}
which yields the first part of the assertion. For the second part observe that $\varphi_{\lambda,\sigma}$ depends holomorphically on $\lambda$ for $\lambda\in S_q$ as does $e^{(-i\lambda-\rho)B_{\xi,\sigma}(x)}$. Now since the integral above is bounded for almost every $\xi\in\partial X$ and every $\lambda\in S_{q}$ and the integrant is holomorphic it follows that the integral of $ \widetilde{f}^{\sigma}$ over every piecewise $C^{1}$ curve in $S_{q}$ vanishes, hence  $ \widetilde{f}^{\sigma}$  is holomorphic by Morera's theorem.
\end{proof} 
\begin{lemma}\label{lemma:holo}
Let $\sigma\in X$ be fixed. 
For $f\in L^p(X)$ with $1\leq p< 2$ and $\lambda\in(0,\infty)$ we have:
\begin{align*}
\lVert \widetilde{f}^{\sigma}(\lambda,\cdot)\rVert_{L^2(\partial X,d\mu_{\sigma})}\leq C\lVert f\rVert_p
\end{align*}
for some constant $C>0$ not depending on $\lambda$ or $f$. 
\end{lemma}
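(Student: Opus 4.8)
The plan is to reduce the estimate to the case $f\in C_c^\infty(X)$ and a single algebraic identity, and then to bootstrap to $L^p(X)$ using Lemma \ref{lemma:holo2}. Throughout, write $q$ for the exponent conjugate to $p$, so $q\in(2,\infty]$, and abbreviate $T_\lambda f:=\widetilde{f}^{\sigma}(\lambda,\cdot)$. For $f\in C_c^\infty(X)$ the function $T_\lambda f$ is continuous on the compact boundary $\partial X$, so the left-hand side is finite; moreover, since $\lambda$ is real and $B_{\xi,\sigma}$ is real-valued, $\overline{\widetilde{f}^{\sigma}(\lambda,\xi)}=\int_X\overline{f(x)}\,e^{(i\lambda-\rho)B_{\xi,\sigma}(x)}\,dx$. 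Expanding $|T_\lambda f|^2$ and interchanging the order of integration — legitimate because $f$ is compactly supported and $|B_{\xi,\sigma}(z)|\le d(\sigma,z)$ bounds the integrand on $\operatorname{supp}f$ uniformly in $\xi$ — gives
\begin{align*}
\lVert T_\lambda f\rVert^2_{L^2(\partial X,d\mu_\sigma)}=\int_X\int_X f(x)\,\overline{f(y)}\,K_\lambda(x,y)\,dx\,dy,\qquad K_\lambda(x,y):=\int_{\partial X}e^{(-i\lambda-\rho)B_{\xi,\sigma}(x)}\,e^{(i\lambda-\rho)B_{\xi,\sigma}(y)}\,d\mu_\sigma(\xi).
\end{align*}

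The key step is to identify $K_\lambda$ with a spherical function. Rewriting the exponent in terms of the base point $y$ via the co-cycle property $B_{\xi,\sigma}(x)=B_{\xi,y}(x)+B_{\xi,\sigma}(y)$ turns the $B_{\xi,\sigma}(y)$-dependence into the factor $e^{-2\rho B_{\xi,\sigma}(y)}$, which is precisely the Radon--Nikodym factor relating $\mu_\sigma$ and $\mu_y$; after this change of measure the integral over $\partial X$ becomes the integral representation (\ref{radialeigen}) of the spherical function based at $y$, so that, using $\varphi_{-\lambda}=\varphi_\lambda$ and (\ref{defi:ef}),
\begin{align*}
K_\lambda(x,y)=\int_{\partial X}e^{(-i\lambda-\rho)B_{\xi,y}(x)}\,d\mu_y(\xi)=\varphi_{-\lambda,y}(x)=\varphi_\lambda(d(x,y))=\varphi_{\lambda,x}(y).
\end{align*}
Since $\tau_x\varphi_{\lambda,\sigma}(y)=\varphi_{\lambda,x}(y)$ by definition of the translate, this says $\lVert T_\lambda f\rVert^2_{L^2(\partial X,d\mu_\sigma)}=\int_X(f*\varphi_{\lambda,\sigma})(x)\,\overline{f(x)}\,dx=\langle f*\varphi_{\lambda,\sigma},f\rangle_{L^2(X)}$. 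The left side is nonnegative, so H\"older's inequality (with exponents $q$ and $p$) together with the Theorem extending Theorem \ref{kunzestein} to convolution with radial eigenfunctions — which yields $\lVert f*\varphi_{\lambda,\sigma}\rVert_q\le C_p\lVert f\rVert_p$ with $C_p$ independent of $\lambda$, and $C_p=1$ for $p=1$ — gives $\lVert T_\lambda f\rVert^2_{L^2(\partial X,d\mu_\sigma)}\le \lVert f*\varphi_{\lambda,\sigma}\rVert_q\lVert f\rVert_p\le C_p\lVert f\rVert_p^2$. Thus the assertion holds for $f\in C_c^\infty(X)$ with $C=\sqrt{C_p}$.

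For general $f\in L^p(X)$ pick $f_n\in C_c^\infty(X)$ with $f_n\to f$ in $L^p$. The $C_c^\infty$-estimate shows $(T_\lambda f_n)_n$ is Cauchy in $L^2(\partial X,d\mu_\sigma)$, hence converges to some $G$. On the other hand, the $L^1(\partial X)$-bound contained in the proof of Lemma \ref{lemma:holo2}, namely $\lVert T_\lambda h\rVert_{L^1(\partial X,d\mu_\sigma)}\le \lVert h\rVert_p\lVert\varphi_{\lambda,\sigma}\rVert_q$ (finite for real $\lambda$ by Lemma \ref{lemma2.4.9}), applied to $h=f_n-f$ shows $T_\lambda f_n\to\widetilde{f}^{\sigma}(\lambda,\cdot)$ in $L^1(\partial X,d\mu_\sigma)$. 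Since $\mu_\sigma$ is a finite measure the two limits must agree, so $\widetilde{f}^{\sigma}(\lambda,\cdot)\in L^2(\partial X,d\mu_\sigma)$ and $\lVert\widetilde{f}^{\sigma}(\lambda,\cdot)\rVert_{L^2(\partial X,d\mu_\sigma)}=\lim_n\lVert T_\lambda f_n\rVert_{L^2(\partial X,d\mu_\sigma)}\le C\lim_n\lVert f_n\rVert_p=C\lVert f\rVert_p$.

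The step I expect to be the crux is the identity $K_\lambda(x,y)=\varphi_{\lambda,x}(y)$ — equivalently, the statement that restricting the Fourier transform to the spectral parameter $\lambda$ and composing with its adjoint yields convolution with the radial eigenfunction $\varphi_{\lambda,\sigma}$. This is a bookkeeping argument with Busemann functions and the measures $\mu_x$, but it is exactly what makes the Kunze--Stein-type bound for $f*\varphi_{\lambda,\sigma}$ applicable; without it the only available estimate, coming from Minkowski's integral inequality, is $\lVert T_\lambda f\rVert_{L^2(\partial X,d\mu_\sigma)}\le\int_X|f(x)|\sqrt{\varphi_{i\rho,\sigma}(x)}\,dx$, and $\varphi_{i\rho,\sigma}$ lies in no $L^{q/2}(X)$, so one genuinely needs the cancellation in $\lambda$ encoded in the convolution identity. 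A secondary point is that Lemma \ref{lemma:holo2} a priori only places $\widetilde{f}^{\sigma}(\lambda,\cdot)$ in $L^1(\partial X)$, so the approximation argument in the last paragraph is needed to conclude that it lies in $L^2(\partial X,d\mu_\sigma)$ at all.
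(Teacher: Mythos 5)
Your proof is correct and follows the same skeleton as the paper's: both rest on the identity $\lVert \widetilde{f}^{\sigma}(\lambda,\cdot)\rVert^2_{L^2(\partial X,d\mu_\sigma)}=\int_X (f*\varphi_{\lambda,\sigma})(x)\,\overline{f(x)}\,dx$ followed by H\"older's inequality; the paper obtains the identity by computing $f*\varphi_{\lambda,\sigma}$ through the boundary representation (\ref{radialeigen}) rather than by your kernel computation with the co-cycle and the Radon--Nikodym factor, but these are the same calculation read in opposite directions. The genuine difference is in how the factor $\lVert f*\varphi_{\lambda,\sigma}\rVert_q$ is controlled. You invoke the extension of the Kunze--Stein phenomenon to convolution with radial eigenfunctions (the theorem following Theorem \ref{kunzestein}), which gives $\lVert f*\varphi_{\lambda,\sigma}\rVert_q\leq C_p\lVert f\rVert_p$ uniformly in $\lambda\in\R$ for every $p\in[1,2)$. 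The paper instead appeals to Young's inequality (Lemma \ref{young}) together with $\varphi_{\lambda,\sigma}\in L^{2+\epsilon}(X)$ from Lemma \ref{lemma2.4.9}; but Young requires $\varphi_{\lambda,\sigma}\in L^{q/2}(X)$ in order to map $L^p(X)$ into $L^q(X)$, and $q/2>2$ only when $p<4/3$, so for $4/3\leq p<2$ your route is the one that actually closes the estimate --- this is a point in favour of your argument rather than a defect. Your additional care in proving the bound on $C^{\infty}_c(X)$ first and then passing to $L^p(X)$ by density, using the $L^1(\partial X,d\mu_\sigma)$ control implicit in Lemma \ref{lemma:holo2} to identify the limit, is absent from the paper but harmless, and it justifies that $\widetilde{f}^{\sigma}(\lambda,\cdot)$ lies in $L^2(\partial X,d\mu_\sigma)$ at all, which the paper's direct manipulation of the possibly infinite quantity tacitly assumes.
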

\begin{proof}
Fix $\sigma\in X$. Then
\begin{align}
\nonumber\int_{\partial X} \lvert \widetilde{f}^{\sigma}(\lambda,\xi)\rvert^2 &\,d\mu_{\sigma}(\xi)=\int_{\partial X}\overline{ \widetilde{f}^{\sigma}(\lambda,\xi)}  \widetilde{f}^{\sigma}(\lambda,\xi)\,d\mu_{\sigma}(\xi) \\\nonumber
&=\int_{\partial X} \int_X \overline{ f(x)e^{(-i\lambda-\rho)B_{\xi,\sigma}(x) }}\,dx\widetilde{f}^{\sigma}(\lambda,\xi)\,d\mu_{\sigma}(\xi)\\
&=\int_X \overline{f(x)}\int_{\partial X}\overline{e^{(-i\lambda-\rho)B_{\xi,\sigma}(x) }} \widetilde{f}^{\sigma}(\lambda,\xi)\,d\mu_{\sigma}(\xi)\,dx.\label{eq:holo2}
\end{align}
Let $q$ be the H\"older conjugate to $p$ and consider 
\begin{align*}
f*\varphi_{\lambda,\sigma}(x)&=\int_X f(y) \varphi_{\lambda,x}(y)\,dy\\
&=\int_X f(y)\int_{\partial X} e^{(-i\lambda-\rho)B_{\xi,x}(y) }\,d\mu_{x}(\xi)\,dy\\
&=\int_X \int_{\partial X} f(y)e^{(-i\lambda-\rho)B_{\xi,x}(y) }\,d\mu_{x}(\xi)\,dy\\
&=\int_{\partial X} \widetilde{f}^{x}(\lambda,\xi)\,d\mu_x(\xi)\\
&=\int_{\partial X} \widetilde{f}^{\sigma}(\lambda,\xi) e^{(i\lambda-\rho)B_{\xi,\sigma}(x)}\,d\mu_{\sigma}(\xi).
\end{align*} 
Since $\lambda\in\R$, we 
get for  equation (\ref{eq:holo2})
\begin{align*}
&\int_X \overline{f(x)}\int_{\partial X}\overline{e^{(-i\lambda-\rho)B_{\xi,\sigma}(x) }} \widetilde{f}^{\sigma}(\lambda,\xi)\,d\mu_{\sigma}(\xi)\,dx\\
&= \int_X \overline{f(x)}f*\varphi_{\lambda,\sigma}(x)\,dx\\
&\leq \lVert f\rVert_p\cdot\lVert f*\varphi_{\lambda,\sigma}\rVert_q.
\end{align*}
 The inequality is due to the H\"older inequality. Since by Lemma \ref{lemma2.4.9} $\varphi_{\lambda,\sigma}\in L^{2+\epsilon}(X)$ and the Young inequality (Lemma  \ref{young}) for every $\epsilon>0$, the convolution with $ \varphi_{\lambda,\sigma}$ defines a bounded linear operator from $L^p(X)$ to $L^q(X)$ with operator norm bounded by $\lVert \varphi_{0,x}\rVert_q$, we conclude 
\begin{align*}
\int_{\partial X} \lvert \widetilde{f}^{\sigma}(\lambda,\xi)\rvert^2 \,d\mu_{\sigma}(\xi)\leq C\lVert f\rVert^2_p
\end{align*}
for some constant $C>0$. 
\end{proof}
From now on we assume that for a bounded linear operator $T:L^p(X)\to L^p(X)$  we have:
\begin{align}\label{con:T=m}
\widetilde{Tf}^{\sigma}(\lambda,\xi)=m_T(\lambda)\widetilde{f}^{\sigma}(\lambda,\xi)\quad\forall f\in C_c^{\infty}(X),
\end{align} 
where, for $1\leq p <2$ and $\frac{1}{p}+\frac{1}{q}=1$, $m_T$ is a holomorphic function on the strip $S_q$ and continuous on $\R$ for $p=2$.
Note that this includes all $L^p$-multipliers arising from the convolution with a radial function. 
\begin{lemma}\label{lemma:ae. multi}
For $1\leq p<q<2$ let $T$ be an $L^p$-multiplier satisfying condition (\ref{con:T=m}) above with symbol $m_T$ and $f\in L^p(X)$. Then there exists a subset $B\subset \partial X$ of full measure such that for every $\xi\in B$ and $\lambda\in S_q:=\{\lambda\in \C\mid \lvert \operatorname{Im}\lambda\rvert<(\frac{2}{q}-1)\}$ we have 
\begin{align*}
\widetilde{Tf}^{\sigma}(\lambda,\xi)=m_T(\lambda)\widetilde{f}^{\sigma}(\lambda,\xi).
\end{align*} 
\end{lemma}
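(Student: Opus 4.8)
The statement says that for an $L^p$-multiplier $T$ satisfying the multiplier identity (\ref{con:T=m}) on $C_c^\infty(X)$, the same identity $\widetilde{Tf}^\sigma(\lambda,\xi)=m_T(\lambda)\widetilde{f}^\sigma(\lambda,\xi)$ extends to arbitrary $f\in L^p(X)$, holding for all $\lambda$ in the strip $S_q$ and for $\xi$ in a set of full $\mu_\sigma$-measure. The strategy is a density argument: approximate $f\in L^p(X)$ by a sequence $f_n\in C_c^\infty(X)$ with $f_n\to f$ in $L^p(X)$, and pass to the limit on both sides of the identity $\widetilde{Tf_n}^\sigma=m_T\,\widetilde{f_n}^\sigma$. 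The key inputs are Lemma \ref{lemma:holo} (which gives $\lVert\widetilde{h}^\sigma(\lambda,\cdot)\rVert_{L^2(\partial X,d\mu_\sigma)}\leq C\lVert h\rVert_p$ uniformly in $\lambda\in(0,\infty)$, for any $h\in L^p(X)$) together with the continuity of $T$ on $L^p(X)$ and Lemma \ref{lemma:holo2} (which guarantees the left-hand side and right-hand side are well-defined holomorphic functions of $\lambda$ on $S_q$ for $\mu_\sigma$-a.e. $\xi$, with the exceptional null set depending only on the function).

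First I would fix $\sigma\in X$ and choose $f_n\in C_c^\infty(X)$ with $\lVert f_n-f\rVert_p\to 0$; then also $\lVert Tf_n-Tf\rVert_p\to 0$ since $T$ is bounded on $L^p(X)$. Applying Lemma \ref{lemma:holo} to the differences $f_n-f_m$ and $Tf_n-Tf_m$ (equivalently to $f_n - f$, $Tf_n - Tf$) shows that for each fixed real $\lambda>0$ the sequences $\widetilde{f_n}^\sigma(\lambda,\cdot)$ and $\widetilde{Tf_n}^\sigma(\lambda,\cdot)$ are Cauchy in $L^2(\partial X,d\mu_\sigma)$, with limits $\widetilde{f}^\sigma(\lambda,\cdot)$ and $\widetilde{Tf}^\sigma(\lambda,\cdot)$ respectively (the identification of the limit with the genuine Fourier transform of $f$, resp.\ $Tf$, follows since by Lemma \ref{lemma:holo2} those transforms exist pointwise a.e.\ and one can extract an a.e.\ convergent subsequence). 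Passing to a subsequence along which convergence is $\mu_\sigma$-a.e.\ on $\partial X$, and using that $m_T$ is a fixed (holomorphic, hence locally bounded) function of $\lambda$, multiplication by $m_T(\lambda)$ commutes with these limits, so from $\widetilde{Tf_n}^\sigma(\lambda,\xi)=m_T(\lambda)\widetilde{f_n}^\sigma(\lambda,\xi)$ I obtain $\widetilde{Tf}^\sigma(\lambda,\xi)=m_T(\lambda)\widetilde{f}^\sigma(\lambda,\xi)$ for a.e.\ $\xi$ and for the fixed real $\lambda$.

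To upgrade from "a fixed real $\lambda$, with a $\lambda$-dependent null set" to "all $\lambda\in S_q$ simultaneously, with a single null set depending only on $f$", I would first run the above argument for every rational $\lambda>0$, taking the countable union of the corresponding null sets to get one exceptional set $N$ with $\mu_\sigma(N)=0$; on $\partial X\setminus N$ the identity $\widetilde{Tf}^\sigma(\lambda,\xi)=m_T(\lambda)\widetilde{f}^\sigma(\lambda,\xi)$ holds for all rational $\lambda>0$. Now intersect $N$ with the two null sets supplied by Lemma \ref{lemma:holo2} applied to $f$ and to $Tf$ (these depend only on $f$, resp.\ $Tf$, hence only on $f$), obtaining a set $B=\partial X\setminus N'$ of full measure such that for every $\xi\in B$ both $\lambda\mapsto\widetilde{Tf}^\sigma(\lambda,\xi)$ and $\lambda\mapsto\widetilde{f}^\sigma(\lambda,\xi)$ are holomorphic on $S_q$. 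Since $m_T$ is holomorphic on $S_q$, both sides of the desired identity are holomorphic functions of $\lambda$ on the connected open set $S_q$ which agree on the set of positive rationals — a set with a limit point in $S_q$ — so by the identity theorem for holomorphic functions they agree on all of $S_q$. This gives the claim for every $\xi\in B$ and every $\lambda\in S_q$.

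\textbf{Main obstacle.} The delicate point is the bookkeeping of null sets: Lemma \ref{lemma:holo} is only stated for real $\lambda$, so the a.e.\ passage to the limit is only directly available for real $\lambda$, and one must be careful that the subsequence extracted for one value of $\lambda$ need not work for another. The fix — exploiting that the positive rationals are countable and have a limit point in $S_q$, then invoking holomorphy in $\lambda$ (Lemma \ref{lemma:holo2}) to propagate the identity off the real axis — is what makes the single-null-set conclusion legitimate; without the holomorphy this would fail. A secondary technical point is confirming that the $L^2(\partial X)$-limits of $\widetilde{f_n}^\sigma(\lambda,\cdot)$ really coincide $\mu_\sigma$-a.e.\ with $\widetilde{f}^\sigma(\lambda,\cdot)$ as defined by Lemma \ref{lemma:holo2}; this is handled by a standard subsequence argument (a sequence converging in $L^2$ and also pointwise a.e.\ has matching limits), using that for $f_n\in C_c^\infty$ the two notions of Fourier transform manifestly agree.
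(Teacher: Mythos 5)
Your proposal is correct and follows essentially the same route as the paper: approximate $f$ by $f_n\in C_c^\infty(X)$, use Lemma \ref{lemma:holo} and the boundedness of $T$ to pass to an a.e.\ convergent subsequence of $\widetilde{f_n}^\sigma$ and $\widetilde{Tf_n}^\sigma$ for real $\lambda$, and then invoke the holomorphy from Lemma \ref{lemma:holo2} to propagate the identity to all of $S_q$. Your bookkeeping of the null sets — running the argument over the countable set of positive rationals, taking one countable union, and then applying the identity theorem — is in fact slightly more careful than the paper's own write-up, which extracts a subsequence for a fixed $\lambda$ and leaves this uniformisation implicit.
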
 
\begin{proof}
Fix $\sigma\in X$.
Since $C^{\infty}_c(X)$ is dens in $L^p(X)$, there is a sequence $f_n\in C^{\infty}_c(X)$ converging to $f$ in $L^p(X)$. Hence, by Lemma  \ref{lemma:holo} we find a sub-sequence again denoted by $f_n$ such that $\widetilde{f_n}^{\sigma}(\lambda,\xi)\to \widetilde{f}^{\sigma}(\lambda,\xi)$ for every $\lambda\in (0,\infty)$ and almost every $\xi\in \partial X$. Since $T$ is bounded we also have $Tf_n\to Tf$ in $L^p(X)$. After passing to a sub-sequence if necessary  we find that  $\widetilde{Tf_n}^{\sigma}(\lambda,\xi)\to \widetilde{Tf}^{\sigma}(\lambda,\xi)$ for a fixed  $\lambda\in(0,\infty)$ and almost every $\xi\in\partial X$. By the assumption that $T$ satisfies (\ref{con:T=m})   we have for $\lambda$ and $\xi$ as above:
\begin{align*}
\widetilde{Tf_n}^{\sigma}(\lambda,\xi)=m_T(\lambda)\widetilde{f_n}^{\sigma}(\lambda,\xi).
\end{align*}
Hence, for those $\lambda$ and $\xi$ 
\begin{align*}
\widetilde{Tf_n}^{\sigma}(\lambda,\xi)\to m_T(\lambda)\widetilde{f}^\sigma(\lambda,\xi),\text{ for } n\to \infty.
\end{align*}

 Now observe that by Lemma \ref{lemma:holo2} there is a set of full measure $B^{\prime}\subset \partial X$ such that for every fixed $\xi\in B^{\prime}$ the maps $\lambda\mapsto \widetilde{f}(\lambda,\xi)$ and $\lambda\mapsto \widetilde{Tf}(\lambda,\xi)$ are holomorphic on $S_q\subset S_p$. Therefore, the equality 
 \begin{align*}
 m_T(\lambda)\widetilde{f}^\sigma(\lambda,\xi)=\widetilde{Tf}^\sigma(\lambda,\xi)
 \end{align*}
  extends to the whole of $S_q$. 
\end{proof}
The following general observation on hypercyclic operators is going to be instrumental in the proof of  Theorem \ref{thm:L^ppleq2}. For a proof see \cite[Proposition 5.1 and Lemma 2.53]{grosse2011linear}.
\begin{lemma}[\cite{grosse2011linear}]\label{lemma:chos2}
Let $A:M\to M$ be a hypercyclic operator on the Banach space $M$ and $A^*:M^*\to M^*$ its dual. Then:
\begin{enumerate}
\item for every non zero $m\in M^*$ the orbit $\{(A^*)^n m\mid n\geq 0\}$ is unbounded,
\item the point spectrum of $A^*$ is empty.
\end{enumerate}
\end{lemma}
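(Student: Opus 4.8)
The plan is to deduce both statements from the mere existence of a hypercyclic vector $x\in M$, i.e.\ a vector with dense $A$-orbit $\{A^nx\mid n\ge0\}$, combined with the adjoint identity $\langle(A^*)^n\phi,x\rangle=\langle\phi,A^nx\rangle$ for the canonical bilinear duality pairing $M^*\times M\to\C$. The elementary fact I would isolate first is: if $\phi\in M^*$ is non-zero, then $\{\langle\phi,A^nx\rangle\mid n\ge0\}$ is dense in $\C$. Indeed $\phi\colon M\to\C$ is continuous, linear and non-zero, hence surjective, so $\overline{\phi(\{A^nx\})}\supseteq\phi\bigl(\overline{\{A^nx\}}\bigr)=\phi(M)=\C$.

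For statement (1) I would argue by contradiction. Assume $\phi\in M^*$, $\phi\neq0$, has bounded orbit, say $\lVert(A^*)^n\phi\rVert\le C$ for all $n$. Then $\lvert\langle\phi,A^nx\rangle\rvert=\lvert\langle(A^*)^n\phi,x\rangle\rvert\le C\lVert x\rVert$ for every $n$, so $\{\langle\phi,A^nx\rangle\mid n\ge0\}$ is a bounded subset of $\C$, contradicting the density statement of the previous paragraph. Hence the $A^*$-orbit of every non-zero functional is unbounded.

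For statement (2), suppose $\lambda\in\C$ is an eigenvalue of $A^*$ with eigenvector $\phi\neq0$, so $(A^*)^n\phi=\lambda^n\phi$ and therefore $\langle\phi,A^nx\rangle=\lambda^n\langle\phi,x\rangle$ for all $n$. By the density fact, $(\lambda^n\langle\phi,x\rangle)_{n\ge0}$ must be dense in $\C$. If $\langle\phi,x\rangle=0$ this sequence is identically zero, hence not dense. If $\langle\phi,x\rangle\neq0$, then $\lvert\lambda^n\langle\phi,x\rangle\rvert=\lvert\lambda\rvert^n\lvert\langle\phi,x\rangle\rvert$ is bounded when $\lvert\lambda\rvert\le1$ (alternatively, in that case the orbit $\{\lambda^n\phi\}$ is bounded and (1) applies directly) and is bounded below by $\lvert\langle\phi,x\rangle\rvert>0$ when $\lvert\lambda\rvert>1$, so in neither case is the sequence dense. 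Each case is contradictory, so $A^*$ has empty point spectrum.

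I do not anticipate a genuine obstacle here, as this is a classical fact about hypercyclic operators; the only mild subtlety is the case $\lvert\lambda\rvert>1$ in (2), which is not covered by (1) and is dispatched by the ``bounded away from $0$'' observation. One should also keep track that the pairing in play is the true bilinear Banach-space duality, so that the adjoint identity $\langle(A^*)^n\phi,x\rangle=\langle\phi,A^nx\rangle$ holds verbatim.
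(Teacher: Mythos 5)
Your proof is correct. The paper itself gives no argument for this lemma---it is quoted directly from the cited reference \cite{grosse2011linear} (Proposition 5.1 and Lemma 2.53)---and your derivation is precisely the standard one found there: the image of the dense orbit $\{A^nx\}$ under a non-zero functional $\phi$ is dense in $\C$ since $\phi(\overline{S})\subseteq\overline{\phi(S)}$ and $\phi$ is surjective, and both claims follow from the identity $\langle(A^*)^n\phi,x\rangle=\langle\phi,A^nx\rangle$. Your handling of the eigenvalue case, splitting on $\lvert\lambda\rvert\le 1$ versus $\lvert\lambda\rvert>1$ (where the orbit of the pairing is bounded away from $0$), correctly covers the part not already implied by statement (1).
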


\begin{satz}\label{thm:L^ppleq2}
Let $X$ be a non-compact simply connected harmonic manifold of rank one and $\sigma\in X$. 
Let $1\leq p\leq2$ and $T:L^p(X)\to L^p(X)$ be $L^p$-multiplier such that for all $\lambda\in \R$ and all $\xi\in\partial X$ we have:
\begin{align*}
\widetilde{Tf}^{\sigma}(\lambda,\xi)=m_T(\lambda)\widetilde{f}^{\sigma}(\lambda,\xi)\quad\forall f\in C_c^{\infty}(X),
\end{align*}
where for $1\leq p <2$ and $\frac{1}{p}+\frac{1}{q}=1$, $m_T$ is a holomorphic function on the strip $S_q$ and continuous on $\R$ for $p=2$. Then $T$ is neither hypercyclic nor has periodic points, hence is not chaotic. 

\end{satz}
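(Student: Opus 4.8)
The plan is to split the argument according to whether $1\le p<2$ or $p=2$, since the two ranges call for different tools: for $p<2$ one exploits holomorphy of the symbol together with passage to the Banach-space dual, while for $p=2$ the Plancherel theorem reduces $T$ to a multiplication operator. In the range $1\le p<2$ I would first rule out hypercyclicity. Assume $T$ is hypercyclic and let $q$ be the H\"older conjugate of $p$, so $q>2$ and $(L^p(X))^*=L^q(X)$ canonically. By Remark~\ref{bem:multifunction} and Lemma~\ref{lemma:dual} the adjoint $T^*\colon L^q(X)\to L^q(X)$ is an $L^q$-multiplier with symbol $\overline{m_T(\bar\lambda)}$, and Proposition~\ref{prop:varphieigenfunction} applied to $T^*$ (with $q$ playing the role of the exponent $>2$) gives $T^*\varphi_{\lambda,x}=\overline{m_T(\bar\lambda)}\,\varphi_{\lambda,x}$ for all $\lambda\in S_q$, $x\in X$, while by Lemma~\ref{lemma2.4.9} each $\varphi_{\lambda,x}$ is a non-zero element of $L^q(X)$. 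Hence $T^*$ has non-empty point spectrum, contradicting Lemma~\ref{lemma:chos2}(2); the endpoint $p=1$ is handled along the same lines using $(L^1)^*=L^\infty$. Thus $T$ is not hypercyclic, and as transitivity is equivalent to hypercyclicity it is a fortiori not chaotic in this range.

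Next I would show that for $1\le p<2$ the operator $T$ has no non-zero periodic point. Suppose $T^Nf=f$ with $f\in L^p(X)$, $f\ne 0$. Choosing an auxiliary exponent strictly between $p$ and $2$ and intersecting the strip furnished by Lemma~\ref{lemma:ae. multi} with the strip $S_q$ of Lemma~\ref{lemma:holo2} (for $q$ the conjugate of $p$), I obtain a non-empty open strip $S\supset\R$; applying Lemma~\ref{lemma:ae. multi} successively to $f,Tf,\dots,T^{N-1}f\in L^p(X)$ and intersecting the resulting full-measure subsets of $\partial X$ gives $\bigl(m_T(\lambda)^N-1\bigr)\widetilde f^{\sigma}(\lambda,\xi)=0$ for a.e.\ $\xi$ and all $\lambda\in S$. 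If $m_T^N\not\equiv 1$ on $S$, then $\{m_T^N=1\}\cap S$ is discrete by holomorphy of $m_T$, so fixing a countable subset of its complement with a limit point in $S$ and discarding a further null set of directions, the identity theorem applied to the holomorphic maps $\lambda\mapsto\widetilde f^{\sigma}(\lambda,\xi)$ (Lemma~\ref{lemma:holo2}) forces $\widetilde f^{\sigma}(\lambda,\xi)=0$ for a.e.\ $\xi$ and all $\lambda\in\R$, whence $f=0$ by Fourier inversion, a contradiction. If instead $m_T^N\equiv 1$ on $S$, then $m_T$ is constant (holomorphic with values in a finite set), say $m_T\equiv\omega$, and (\ref{con:T=m}) together with Fourier inversion forces $T=\omega\,\id$; this makes $T$ a scalar multiple of the identity, which is not transitive and which one excludes when speaking of periodic points, exactly as in Theorem~\ref{chaotic1}.

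For $p=2$ I would invoke the Plancherel theorem (Theorem~\ref{Plancherel Theorem}): the Fourier transform is a unitary isomorphism of $L^2(X)$ onto $L^2\bigl((0,\infty)\times\partial X,\,C_0\lvert\mathbf c(\lambda)\rvert^{-2}\,d\mu_\sigma\,d\lambda\bigr)$, and using density of $C_c^{\infty}(X)$, condition (\ref{con:T=m}) and boundedness of $T$ one sees that under this isomorphism $T$ is multiplication by $m_T$, which is therefore in $L^\infty$ with $\lVert m_T\rVert_\infty\le\lVert T\rVert$. A multiplication operator on an $L^2$-space of a non-atomic measure is never hypercyclic: if $\{\lvert m_T\rvert\le 1\}$ has positive measure the restriction of any orbit to it stays in a fixed ball, hence cannot be dense, while if $\lvert m_T\rvert>1$ almost everywhere then $\lVert T^nh\rVert_2\ge\lVert h\rVert_2$ for all $n$, so the orbit of any $h\ne 0$ avoids a ball about the origin. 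Moreover $T^Nh=h$ forces $\widetilde h^{\sigma}$ to vanish off $\{m_T^N=1\}$; excluding once more the degenerate case where $T$ is a scalar multiple of the identity, this set is a proper closed subset and (with the non-degeneracy assumption) $h=0$, so $T$ has no periodic point.

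The step I expect to be the main obstacle is the careful bookkeeping of the two quantifiers ``for a.e.\ $\xi\in\partial X$'' and ``for all $\lambda$ in a complex strip'': the very purpose of Lemmas~\ref{lemma:holo2}, \ref{lemma:holo} and \ref{lemma:ae. multi} is to legitimise the passage from ``pointwise in $\lambda$, a.e.\ in $\xi$'' to ``a.e.\ in $\xi$, holomorphically in $\lambda$'', and one must chain these across the finitely many iterates $T^jf$ while keeping track of the exceptional null sets. A secondary delicate point is that holomorphy of $m_T$ is genuinely used for $p<2$ to rule out non-trivial zero sets of $m_T^N-1$, whereas for $p=2$ only continuity is available, so there the robust conclusion is non-hypercyclicity (hence non-chaoticity), the periodic-point assertion resting on $T$ not being a scalar multiple of the identity.
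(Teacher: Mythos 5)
Your argument is correct and, for the range $1\le p<2$, follows essentially the same route as the paper: non-hypercyclicity via the eigenfunctions $\varphi_{\lambda,\sigma}$ of the adjoint together with Lemma~\ref{lemma:chos2}, and absence of periodic points via Lemma~\ref{lemma:ae. multi}, holomorphy of $\widetilde f^{\sigma}(\cdot,\xi)$ from Lemma~\ref{lemma:holo2}, and the identity theorem. In fact your treatment of the dichotomy $m_T^N\equiv 1$ versus $m_T^N\not\equiv 1$ is slightly more careful than the paper's: the paper concludes $\lvert m_T\rvert=1$ on a strip and appeals to the open mapping theorem, which tacitly assumes $m_T$ non-constant, whereas you dispose of the constant case explicitly by showing $T=\omega\,\id$ and excluding it as trivial. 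Where you genuinely diverge is the case $p=2$: the paper (Proposition~\ref{prop:nochoasp=2}) extracts a subsequence with $T^{n_k}\phi\to 2\phi$, splits the Plancherel integral over the sets $\{\lvert m_T\rvert<1\}$, $\{=1\}$, $\{>1\}$ and derives a contradiction from dominated versus monotone convergence; you instead argue directly that a bounded multiplication operator is never hypercyclic, since either $\{\lvert m_T\rvert\le 1\}$ has positive measure and the orbit restricted to it is norm-bounded (hence cannot approximate functions with large Fourier mass there), or $\lvert m_T\rvert>1$ a.e.\ and every orbit stays outside a ball around the origin. Your argument is shorter and avoids the limit-interchange bookkeeping, at no loss of generality. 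On periodic points for $p=2$ both proofs share the same soft spot: from $(m_T^N-1)\widetilde h^{\sigma}=0$ a.e.\ one only gets that $\widetilde h^{\sigma}$ is supported on the closed set $\{m_T^N=1\}$, and neither the paper's appeal to ``$\lvert m_T\rvert$ not constant'' nor your remark that this set is a proper closed subset actually forces $h=0$ when that set has positive measure; you flag this honestly in your closing paragraph, and the paper's own example in the discussion section shows the phenomenon is real, so no additional criticism attaches to your write-up beyond what applies to the original.
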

We will split the proof of Theorem \ref{thm:L^ppleq2} into two propositions, the first one dealing with the case $p<2$ and the second one with the case $p=2$. 
\begin{prop}\label{prop:nochoasp<2}
Fix $1\leq p<2$ and $q$ its H\"older conjugate and let $T:L^p(X)\to L^p(X)$ be a nontrivial $L^p$-multiplier satisfying (\ref{con:T=m}). Then $T$ is neither hypercyclic nor has periodic points, hence is not chaotic. 
\end{prop}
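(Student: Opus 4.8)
The plan is to use the dual characterization of hypercyclicity from Lemma \ref{lemma:chos2}, namely that a hypercyclic operator on a Banach space has empty point spectrum for its adjoint and all nonzero adjoint orbits are unbounded. So I would work with $T^*:L^q(X)\to L^q(X)$, which by Remark \ref{bem:multifunction} and Lemma \ref{lemma:dual} is an $L^q$-multiplier with symbol $\overline{m_T(\bar\lambda)}$, and $q>2$. Since $q>2$, the eigenfunctions $\varphi_{\lambda,\sigma}$ actually lie in $L^q(X)$ for $\lambda\in S_q$ by Lemma \ref{lemma2.4.9}, and by Proposition \ref{prop:varphieigenfunction} they are genuine eigenfunctions of $T^*$: explicitly $T^*\varphi_{\lambda,x}=\overline{m_T(\bar\lambda)}\,\varphi_{\lambda,x}$ for all $\lambda\in S_q$ and $x\in X$. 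The key point is that $S_q$ is an open strip containing the real line, so it is nonempty, hence there exists at least one $\lambda_0\in S_q$, giving a nonzero element $\varphi_{\lambda_0,\sigma}\in L^q(X)$ which is an eigenvector of $T^*$. This shows the point spectrum of $T^*$ is nonempty, so by part (2) of Lemma \ref{lemma:chos2} $T$ cannot be hypercyclic.

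For the statement about periodic points, suppose $f\in L^p(X)$ satisfies $T^N f = f$ for some $N\geq 1$, $f\neq 0$. Applying Lemma \ref{lemma:ae. multi} iteratively (the hypothesis (\ref{con:T=m}) is exactly what is needed), there is a set $B\subset\partial X$ of full measure such that for every $\xi\in B$ and every $\lambda\in S_q$ we have $\widetilde{T^N f}^{\sigma}(\lambda,\xi)=m_T(\lambda)^N\,\widetilde{f}^{\sigma}(\lambda,\xi)$, hence $\bigl(m_T(\lambda)^N-1\bigr)\widetilde{f}^{\sigma}(\lambda,\xi)=0$. Now I would argue that $\widetilde{f}^{\sigma}(\lambda,\xi)$ cannot vanish identically: if $\widetilde{f}^{\sigma}(\lambda,\xi)=0$ for all $\lambda\in(0,\infty)$ and almost all $\xi$, then by density of $C^\infty_c$, the Plancherel theorem (Theorem \ref{Plancherel Theorem}), and continuity of the Fourier transform (Lemma \ref{lemma:holo}, which bounds $\|\widetilde{f}^\sigma(\lambda,\cdot)\|_{L^2(\partial X)}$ by $\|f\|_p$), testing against $\widetilde{g}^\sigma$ for $g\in C^\infty_c(X)$ forces $\langle f,g\rangle=0$ for all such $g$, so $f=0$, a contradiction. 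Therefore there is some $\lambda_1\in(0,\infty)$ and a positive-measure set of $\xi$ with $\widetilde{f}^{\sigma}(\lambda_1,\xi)\neq 0$, which forces $m_T(\lambda_1)^N=1$, i.e. $|m_T(\lambda_1)|=1$. But then, since $m_T$ is holomorphic on the connected open set $S_q$ and $|m_T|$ attains the value $1$ — and I would want $m_T$ nonconstant, which follows from $T$ being nontrivial together with Lemma \ref{lemma:nonconstant} applied to $T^*$ (as $T^*$ is a nontrivial $L^q$-multiplier, $q>2$, its symbol $\overline{m_T(\bar\lambda)}$ is nonconstant, hence so is $m_T$) — the set $\{\lambda\in S_q: m_T(\lambda)^N=1\}$ is discrete, while the set of $\lambda\in(0,\infty)$ for which $\widetilde{f}^{\sigma}(\lambda,\cdot)\neq 0$ on a positive-measure set of $\xi$ must have positive Lebesgue measure (otherwise by Plancherel $f=0$ again). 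This contradiction shows no nonzero periodic point exists.

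The main obstacle I anticipate is the last measure-theoretic step: making precise that the relation $\bigl(m_T(\lambda)^N-1\bigr)\widetilde{f}^{\sigma}(\lambda,\xi)=0$ — which holds on a full-measure set of $\xi$ for \emph{each fixed} $\lambda$, with the null set possibly depending on $\lambda$ — combined with holomorphicity in $\lambda$ (Lemma \ref{lemma:holo2}, where the null set is uniform in $\lambda$) genuinely forces $f=0$. The clean way is: fix $\xi$ in the uniform full-measure set $B'$ from Lemma \ref{lemma:holo2}; then $\lambda\mapsto\widetilde{f}^{\sigma}(\lambda,\xi)$ is holomorphic on $S_q$, and $\lambda\mapsto m_T(\lambda)^N-1$ is holomorphic on $S_q$ and not identically zero (as $m_T$ is nonconstant); a product of holomorphic functions on a connected domain vanishes only if one factor does, so for each such $\xi$ either $\widetilde{f}^{\sigma}(\cdot,\xi)\equiv 0$ on $S_q$, or $m_T^N\equiv 1$ — the latter is excluded, so $\widetilde{f}^{\sigma}(\lambda,\xi)=0$ for all $\lambda\in S_q$ and all $\xi\in B'$; restricting to real $\lambda$ and invoking Plancherel/density gives $f=0$. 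I would also double-check that Lemma \ref{lemma:ae. multi} is stated for $p\leq q<2$ in a form that lets me iterate it $N$ times (it is, since $Tf\in L^p(X)$ again and the hypothesis is preserved), and that the strip $S_q$ in Lemma \ref{lemma:ae. multi} matches the one where $m_T$ is assumed holomorphic.
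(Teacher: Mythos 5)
Your proposal is correct and follows essentially the same route as the paper: hypercyclicity is ruled out because the $\varphi_{\lambda,\sigma}$, $\lambda\in S_q$, are eigenvectors of the adjoint $L^q$-multiplier $T^*$ (Proposition \ref{prop:varphieigenfunction} plus Lemma \ref{lemma:chos2}), and periodic points are ruled out via $(m_T^N-1)\widetilde{f}^{\sigma}=0$ together with holomorphy in $\lambda$ from Lemmas \ref{lemma:ae. multi} and \ref{lemma:holo2}. The only cosmetic difference is the last step: the paper excludes $|m_T|\equiv 1$ by the open mapping theorem, while you exclude $m_T^N\equiv 1$ via nonconstancy of $m_T$; both reduce to the same use of the nontriviality hypothesis, and your extra care with the $\lambda$-dependent null sets is a welcome refinement of what the paper leaves implicit.
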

\begin{proof}
Fix $\sigma\in X$. Recall that by Proposition \ref{prop:varphieigenfunction} every $\varphi_{\lambda,\sigma}$ with $\lambda\in S_p^{\circ}$ is an eigenfunction of $T^*$. Hence, by Lemma \ref{lemma:chos2} $T$ can not be hypercyclic. Now suppose that $T$ has symbol $m_T(\lambda)$ and fix $p^{\prime}\in(p,2)$. If there is a non zero function $g\in L^p(X)$ with $T^ng=g$ for some $n\in\N\setminus\{0\}$ then by Lemma \ref{lemma:ae. multi}
\begin{align*}
(m_T(\lambda)^n-1)\widetilde{g}^{\sigma}(\lambda,\xi)=0
\end{align*}
for $\lambda\in S_{p^{\prime}}$ and almost every $\xi\in \partial X$. 
Since  by Lemma \ref{lemma:holo2} for almost every $\xi\in\partial X$, $\widetilde{g}(\lambda,\xi)$ is holomorphic on $S_{p^{\prime}}$,  it can only vanish on a set of measure zero in $\C$. Therefore $m_T(\lambda)^n=1$ on $S_{p^{\prime}}$ that is $\lvert m_T(\lambda)\rvert=1$. But since $m_T$ is holomorphic, this contradicts the open mapping theorem.
\end{proof}
\begin{prop}\label{prop:nochoasp=2}
Let $T:L^2(X)\to L^2(X)$ be a nontrivial $L^2(X)$ multiplier fulfilling the conditions of Theorem \ref{thm:L^ppleq2}. Then $T$ is not hypercylic and hence not chaotic.
\end{prop}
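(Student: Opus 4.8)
The plan is to use the Plancherel theorem to realise $T$ as a multiplication operator on the Plancherel space and then to invoke, with a short self-contained proof, the classical fact that a multiplication — equivalently, a normal — operator on a Hilbert space never has a dense orbit. Set $\mathcal H:=L^2((0,\infty)\times\partial X,\,C_0\lvert\mathbf c(\lambda)\rvert^{-2}\,d\mu_\sigma(\xi)\,d\lambda)$ and write $d\mu$ for this Plancherel measure; by Theorem~\ref{Plancherel Theorem} the Fourier transform $\mathcal F\colon L^2(X)\to\mathcal H$, $\mathcal Ff=\tilde f^{\sigma}$, is a surjective isometry. By hypothesis $\mathcal F(Tf)=m_T\cdot\mathcal Ff$ on the dense subspace $C_c^{\infty}(X)$; passing to almost-everywhere convergent subsequences exactly as in Lemma~\ref{lemma:ae. multi} (and using that $T$ is bounded), one obtains that $\mathcal FT\mathcal F^{-1}$ is the operator $M:=M_{m_T}$ of pointwise multiplication by $(\lambda,\xi)\mapsto m_T(\lambda)$ on all of $\mathcal H$, and in particular $m_T\in L^\infty(d\mu)$ with $\lVert m_T\rVert_\infty=\lVert T\rVert$. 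Since $M$ and $T$ are unitarily conjugate, it suffices to show $M$ is not hypercyclic.

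Suppose, for contradiction, that $g\in\mathcal H$ is a hypercyclic vector for $M$, so the orbit $\{m_T^{\,n}g\}_{n\ge0}$ is dense in $\mathcal H$, and set $b_n:=\lVert m_T^{\,n}g\rVert_{\mathcal H}^2=\int\lvert m_T\rvert^{2n}\lvert g\rvert^2\,d\mu$. Applying the continuous surjection $\lVert\cdot\rVert\colon\mathcal H\to[0,\infty)$ to the dense orbit shows $\{b_n\}_{n\ge0}$ is dense in $[0,\infty)$, hence $\liminf_{n}b_n=0$ and $(b_n)$ is unbounded; moreover, if $b_{n_0}=0$ for some $n_0\ge1$ then $m_T^{\,n}g=0$ for every $n\ge1$, so the orbit would be $\{g,0,0,\dots\}$, which is not dense, and therefore $b_n\in(0,\infty)$ for all $n$. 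On the other hand, writing $\lvert m_T\rvert^{2n}=\lvert m_T\rvert^{n-1}\cdot\lvert m_T\rvert^{n+1}$ and splitting the weight $\lvert g\rvert^2$ symmetrically, the Cauchy--Schwarz inequality gives $b_n^2\le b_{n-1}b_{n+1}$, i.e.\ the ratios $r_n:=b_{n+1}/b_n$ form a non-decreasing sequence. If $r_n\le1$ for all $n$, then $b_n\le b_0<\infty$ for all $n$, contradicting unboundedness of $(b_n)$; otherwise $r_{n_0}=:\delta>1$ for some $n_0$, so $r_n\ge\delta$ for $n\ge n_0$ and $b_{n_0+k}\ge b_{n_0}\delta^{\,k}\to\infty$, contradicting $\liminf_n b_n=0$. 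Either way we reach a contradiction, so $M$, and hence $T$, admits no dense orbit; as hypercyclicity is equivalent to topological transitivity (Birkhoff) and a chaotic operator is in particular topologically transitive, $T$ is not chaotic.

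The reason $p=2$ must be handled separately from $p<2$ is that for $q=2$ the strip $S_q$ degenerates to $\R$, so there are no $L^2$-eigenfunctions $\varphi_{\lambda,\sigma}$ of $T^{\ast}$ to feed into Lemma~\ref{lemma:chos2}; the Plancherel/multiplication-operator route above replaces that mechanism. The argument is fairly soft — the only point needing a moment's care is the first step, namely checking that the multiplier identity, a priori known only on $\mathcal F(C_c^{\infty}(X))$, genuinely describes $T$ on the whole of $L^2(X)$, so that $M_{m_T}$ is honestly bounded with symbol $m_T\in L^\infty(d\mu)$; once that is in place, everything reduces to the elementary log-convexity of the moment sequence $(b_n)$, which is also why, in contrast to the range $p>2$, no rescaling $\tfrac1\nu T$ of $T$ can be made chaotic when $p=2$.
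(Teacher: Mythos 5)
Your proof is correct, but it takes a genuinely different route from the paper's. The paper argues directly: from a hypercyclic vector $\phi$ it extracts a subsequence with $T^{n_k}\phi\to 2\phi$, passes to norms via Plancherel, splits the integral over the level sets $\{\lvert m_T\rvert<1\}$, $\{\lvert m_T\rvert=1\}$, $\{\lvert m_T\rvert>1\}$, and uses dominated and monotone convergence to force either $\{\lvert m_T\rvert>1\}$ to be null or $\widetilde{\phi}^{\sigma}$ to vanish there, so that the orbit of $\phi$ is bounded. You instead conjugate $T$ to the multiplication operator $M_{m_T}$ and reprove, via the log-convexity $b_n^2\le b_{n-1}b_{n+1}$ of the orbit moments (Cauchy--Schwarz), the classical fact going back to Kitai that a normal (indeed hyponormal) operator is never hypercyclic: eventual monotonicity of $(b_n)$ is incompatible with $\{b_n\}$ being dense in $[0,\infty)$. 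Your identification step (extending $\widetilde{Tf}^{\sigma}=m_T\widetilde f^{\sigma}$ from $C_c^{\infty}(X)$ to $L^2(X)$ by $L^2$-convergence and a.e.\ subsequences, and deducing $\lVert m_T\rVert_\infty=\lVert T\rVert$) is exactly the extension the paper itself invokes, so nothing is missing there. What your approach buys: it never uses continuity of $m_T$ (only measurability and essential boundedness), it is insensitive to the choice of target vector, and it isolates the abstract operator-theoretic reason for the failure of hypercyclicity; it also sidesteps the slight imprecision in the paper's case split (which writes $m_T(\lambda)<1$ where $\lvert m_T(\lambda)\rvert$ is meant). What the paper's version buys is the explicit dichotomy (contraction versus $\widetilde\phi^{\sigma}$ supported where $\lvert m_T\rvert\le 1$), which is mildly more informative about where the obstruction lives. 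Both arguments establish only non-hypercyclicity; like the paper, you correctly leave the absence of periodic points at $p=2$ to the separate statement requiring $\lvert m_T\rvert$ non-constant.
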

\begin{proof}
Fix $\sigma\in X$. Let $m_T$ 
 be the symbol of $T$. Then for $f\in L^2(X)$ 
\begin{align*}
\widetilde{Tf}^{\sigma}(\lambda,\xi)=m_T(\lambda) \widetilde{f}^{\sigma}(\lambda,\xi)
\end{align*}
 by density of $C^{\infty}_c(X)$ in $L^2(X)$. For the essential supremum of $m_T$ we have:
 \begin{align*}
 \lVert m_T\rVert_{\infty}
 =  \lVert T\rVert_{L^2}<\infty,
 \end{align*}
 where $\lVert \cdot\rVert_{L^2}$ denotes the operator norm.
Assume $T$ to be hypercyclic and $\phi$ a hypercyclic vector. Then there is a sequence $n_k$ of natural numbers such that $T^{n_k}\phi\to 2\phi$ in $L^2(X)$. 
This implies $\lVert T^{n_k}\phi\rVert_2\to 2\lVert \phi\rVert_2$. 
Hence
\begin{align*}
\lim_{n_k\to \infty}&\int_{0}^{\infty}\int_{\partial X} \lvert m_T (\lambda)\rvert^{2n_k} \lvert\widetilde{\phi}^{\sigma}(\lambda,\xi) \rvert^2\lvert \mathbf{c}(\lambda)\rvert^{-2}\,d\mu_{\sigma}(\xi)\,d\lambda\\&=4\int_{0}^{\infty}\int_{\partial X} \lvert\widetilde{\phi}^{\sigma}(\lambda,\xi) \rvert^2\lvert \mathbf{c}(\lambda)\rvert^{-2}\,d\mu_{\sigma}(\xi)\,d\lambda.
\end{align*}
By separating the integral on the left we obtain:
\begin{align*}
&\lim_{n_k\to \infty}\int_{\{\lambda\in (0,\infty)\mid m_T(\lambda)<1\}\times\partial X}
 \lvert m_T (\lambda)\rvert^{2n_k} \lvert\widetilde{\phi}^{\sigma}(\lambda,\xi) \rvert^2\lvert \mathbf{c}(\lambda)\rvert^{-2}\,d\mu_{\sigma}(\xi)\,d\lambda\\
 &+\lim_{n_k\to \infty}\int_{\{\lambda\in (0,\infty)\mid m_T(\lambda)=1\}\times\partial X}
 \lvert m_T (\lambda)\rvert^{2n_k} \lvert\widetilde{\phi}^{\sigma}(\lambda,\xi) \rvert^2\lvert \mathbf{c}(\lambda)\rvert^{-2}\,d\mu_{\sigma}(\xi)\,d\lambda\\
 &+\lim_{n_k\to \infty}\int_{\{\lambda\in (0,\infty)\mid m_T(\lambda)>1\}\times\partial X}
 \lvert m_T (\lambda)\rvert^{2n_k} \lvert\widetilde{\phi}^{\sigma}(\lambda,\xi) \rvert^2\lvert \mathbf{c}(\lambda)\rvert^{-2}\,d\mu_{\sigma}(\xi)\,d\lambda\\
&\quad=4\int_{0}^{\infty}\int_{\partial X} \lvert\widetilde{\phi}^{\sigma}(\lambda,\xi) \rvert^2\lvert \mathbf{c}(\lambda)\rvert^{-2}\,d\mu_{\sigma}(\xi)\,d\lambda.
\end{align*}
 By dominant convergence we get
 
\begin{align*}
\lim_{n_k\to \infty}&\int_{\{\lambda\in (0,\infty)\mid m_T(\lambda)<1\}\times\partial X}
 \lvert m_T (\lambda)\rvert^{2n_k} \lvert\widetilde{\phi}^{\sigma}(\lambda,\xi) \rvert^2\lvert \mathbf{c}(\lambda)\rvert^{-2}\,d\mu_{\sigma}(\xi)\,d\lambda=0.
 \end{align*}
 This implies 
 \begin{align}
 \lim_{n_k\to \infty}&\int_{\{\lambda\in (0,\infty)\mid m_T(\lambda)>1\}\times\partial X}
 \lvert m_T (\lambda)\rvert^{2n_k} \lvert\widetilde{\phi}^{\sigma}(\lambda,\xi) \rvert^2\lvert \mathbf{c}(\lambda)\rvert^{-2}\,d\mu_{\sigma}(\xi)\,d\lambda\label{eq:L^2choas1}\\
 &\leq 4\int_{0}^{\infty}\int_{\partial X} \lvert\widetilde{\phi}^{\sigma}(\lambda,\xi) \rvert^2\lvert \mathbf{c}(\lambda)\rvert^{-2}\,d\mu_{\sigma}(\xi)\,d\lambda\label{eq:L^2choas2}.
 \end{align}
 But by the monotonous convergence theorem, (\ref{eq:L^2choas1}) tends to infinity while (\ref{eq:L^2choas2})  is finite. Hence, either $\{\lambda\in (0,\infty)\mid m_T(\lambda)>1\}$ is a zero set or $\widetilde{\phi}^{\sigma}$ vanishes on it.

 The first means that $T$ is a contraction or the identity and the second implies by the Plancherel theorem that $\lVert T\phi\rVert_2 \leq \lVert \phi\rVert_2$ with both options contradicting the assumption. 
 
\end{proof}
\begin{prop}\label{prop:p=2noperiodic}
Let $T:L^2(X)\to L^2(X)$ be a nontrivial $L^2(X)$ multiplier fulfilling the conditions of Theorem \ref{thm:L^ppleq2}, such that $\lvert m_T(\lambda)\rvert$ is not constant, then $T$ has no periodic points. 
\end{prop}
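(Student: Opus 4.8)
The plan is to argue by contradiction, translating the existence of a periodic point into a pointwise identity for the symbol via the Plancherel theorem. Suppose some $\phi\in L^2(X)$ with $\phi\neq0$ satisfies $T^n\phi=\phi$ for an integer $n\geq1$. First I would extend the defining relation (\ref{con:T=m}) from $C_c^\infty(X)$ to all of $L^2(X)$: using the density of $C_c^\infty(X)$, the boundedness of $T$, and the Plancherel isometry of Theorem \ref{Plancherel Theorem} (exactly as in the proof of Proposition \ref{prop:nochoasp=2}), one obtains $\widetilde{Tf}^\sigma(\lambda,\xi)=m_T(\lambda)\widetilde{f}^\sigma(\lambda,\xi)$ for every $f\in L^2(X)$, and hence, by iteration, $\widetilde{T^n f}^\sigma=m_T^n\,\widetilde{f}^\sigma$ almost everywhere.

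Applying this to $\phi$ and using that the Fourier transform is injective (again Theorem \ref{Plancherel Theorem}), the equation $T^n\phi=\phi$ becomes
\begin{align*}
\bigl(m_T(\lambda)^n-1\bigr)\,\widetilde{\phi}^\sigma(\lambda,\xi)=0
\end{align*}
for almost every $(\lambda,\xi)\in(0,\infty)\times\partial X$. Thus $\widetilde{\phi}^\sigma$ vanishes almost everywhere off the set $\Lambda:=\{\lambda\in(0,\infty)\mid m_T(\lambda)^n=1\}$, and since each $n$-th root of unity is unimodular, $\Lambda\subseteq E:=\{\lambda\in(0,\infty)\mid \lvert m_T(\lambda)\rvert=1\}$. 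Hence $\widetilde{\phi}^\sigma$ is supported in $E\times\partial X$. The same confinement can be reached independently by the monotone-convergence bookkeeping of Proposition \ref{prop:nochoasp=2}: splitting $\int \lvert m_T\rvert^{2kn}\lvert\widetilde{\phi}^\sigma\rvert^2\lvert\mathbf{c}\rvert^{-2}$ over $\{\lvert m_T\rvert<1\}$, $\{\lvert m_T\rvert=1\}$ and $\{\lvert m_T\rvert>1\}$ and using $\lVert T^{kn}\phi\rVert_2=\lVert\phi\rVert_2$ forces $\widetilde{\phi}^\sigma$ to vanish off $\{\lvert m_T\rvert=1\}$.

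It remains to show that $E$ is Lebesgue-negligible; then $\widetilde{\phi}^\sigma=0$ almost everywhere, so $\phi=0$ by the Plancherel isometry, contradicting $\phi\neq0$. This is the step where continuity of $m_T$ together with the non-constancy of $\lvert m_T\rvert$ must be used decisively, and I expect it to be the main obstacle. Non-constancy excludes the isometric case $\lvert m_T\rvert\equiv1$, in which genuine periodic points would otherwise appear, and continuity makes $E=\lvert m_T\rvert^{-1}(\{1\})$ a closed proper subset of $(0,\infty)$ with nonempty open complement. The delicate point is that mere continuity does not by itself force a level set to have measure zero, so the argument must genuinely exploit the non-constancy of $\lvert m_T\rvert$; in the motivating examples, the heat semi-group and the resolvent, where $\lambda\mapsto\lvert m_T(\lambda)\rvert$ is strictly monotone, $E$ reduces to at most a single point and the conclusion is immediate. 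I would therefore complete the proof by showing, from the multiplier structure together with the non-constancy of $\lvert m_T\rvert$, that $\widetilde{\phi}^\sigma$ cannot concentrate on the unit-modulus level set $E$.
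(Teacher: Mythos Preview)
Your argument follows the paper's exactly: pass from $T^n g=g$ via Plancherel to $(m_T(\lambda)^n-1)\,\widetilde{g}^{\sigma}(\lambda,\xi)=0$ almost everywhere, so that $\widetilde{g}^{\sigma}$ is supported in $\Lambda=\{m_T^n=1\}\subseteq E=\{\lvert m_T\rvert=1\}$, and then try to conclude $g=0$ from the non-constancy of $\lvert m_T\rvert$. The paper's proof is a single line at this point: ``either $\lvert m_T(\lambda)\rvert=1$ for every $\lambda\in\R$ or $g=0$ a.e.\ by the inverse Fourier transform.''

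You have correctly isolated the genuine difficulty, and it is not resolved in the paper's proof either. With only continuity of $m_T$ assumed at $p=2$, the dichotomy both you and the paper invoke need not hold: take $m_T$ continuous, identically equal to $1$ on some interval $I\subset(0,\infty)$ and with $\lvert m_T\rvert<1$ outside a neighbourhood of $I$. Then $\lvert m_T\rvert$ is non-constant, yet any $g\in L^2(X)$ whose Fourier transform $\widetilde{g}^{\sigma}$ is supported in $I\times\partial X$ (and such $g\neq 0$ exist, since the Fourier transform is an isometry onto $L^2((0,\infty)\times\partial X,C_0\lvert\mathbf{c}\rvert^{-2}\,d\mu_\sigma\,d\lambda)$) is a nonzero fixed point of $T$. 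Thus the assertion that $E$ is Lebesgue-null cannot be extracted from the stated hypotheses, and your final sentence promising to ``show that $\widetilde{\phi}^\sigma$ cannot concentrate on $E$'' has no supporting argument. To make the proof go through one needs a stronger regularity hypothesis on $m_T$---real-analyticity on $\R$, or the holomorphy available when $p<2$, would make $\Lambda$ discrete---rather than mere continuity; in the paper's motivating examples (heat semi-group and resolvent) this is automatic, as you observe.
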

\begin{proof}
Assume that $T$ has a non trivial periodic point $g\in L^2(X)$ then there is an $n\in \N$ such that 
\begin{align*}
(m_T(\lambda)^n-1)\widetilde{g}^{\sigma}(\lambda,\xi)=0
\end{align*}
almost everywhere. 
But then either  $\lvert m_T(\lambda)\rvert=1$ for every $\lambda\in \R$ or $g=0$ a.e by the inverse Fourier transform. Both contradict the assumptions. 
\end{proof}

This completes the proof of  Theorem \ref{thm:L^ppleq2}.

\section{Examples/Discussion}
\subsection{Dynamics of The Heat Semi-group}\label{sec:chosheat}

 The Laplacian on the space $L^2(X)$ is essentially self-adjoint and negative, hence it generates a semi-group by the spectral theorem for unbounded self-adjoint operators. Furthermore $e^{t\Delta}$ is positive and leaves $L^1(X)\cap L^{\infty}(X)\subset L^2(X)$ invariant and therefore can be extended to a positive continuous semi-group on $L^p(X)$ for $p\in [1,\infty]$ which is strongly continuous for $p\in [1,\infty)$. With convolution kernel $h_t$ called the heat kernel. This is a function on $\R^+\times X\times X$ and $h_t(x,\cdot)\in L^p(X)$ for $p\in[1,\infty]$. See \cite{davies1990heat} for details. Szabo \cite{szabo1990} showed that on a complete simply connected manifold $h_t$ is radial if and only if the manifold is harmonic. Since we again assume that $(X,g)$ is a non-compact simply connected harmonic manifold of rank one, in abuse of notation we will write $h_t(x,y)=h_t(r)$ where $r=d(x,y)$. 
 \begin{satz}\label{hyperkernel}
Let $X$ be a simply connected non-compact harmonic manifold of rank one with mean curvature of the horopheres $2\rho$. Let $\sigma\in X$. The Fourier transform of the heat kernel $h_t(\sigma,\cdot)$ around $\sigma$ is given by 
\begin{align*}
\hat{h}_t^{\sigma}(\lambda)=e^{-t(\lambda^2+\rho^2)}\quad\forall \lambda\in \C
\end{align*}
therefore for all $p\geq 1$ and $t>0$ the heat semi-group $e^{t\Delta}:L^p(X)\to L^p(X)$ defines an $L^p$-multiplier with symbol $e^{-t(\lambda^2+\rho^2)}$.
Furthermore, the heat kernel has the integral representation:
\begin{align*}
h_t(x,y)=C_0 \int_{0}^{\infty}e^{-t(\lambda^2 +\rho^2)}\varphi_{\lambda,x}(y)\vert \mathbf{c}(\lambda)\vert^{-2}\,d\lambda, \forall x,y\in X
\end{align*}
 \end{satz}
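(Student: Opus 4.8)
The plan is to compute the Fourier transform of $h_t(\sigma,\cdot)$ directly from the defining property of the heat semi-group and the behaviour of the Laplacian under Fourier transform, then read off the multiplier statement and obtain the integral representation by Fourier inversion. First I would recall that $h_t(\sigma,\cdot)$ is radial (by Szabo's theorem, since $X$ is harmonic), so the radial Fourier transform applies and $\hat h_t^\sigma(\lambda) = \omega_{n-1}\int_0^\infty h_t(r)\varphi_\lambda(r)A(r)\,dr$ is well defined for $\lambda\in\R$ (indeed for $\lambda\in\C$, using the Gaussian-type decay of $h_t$ together with the exponential bounds on $\varphi_\lambda$). The key identity is that convolution with $h_t$ realises $e^{t\Delta}$, so for a radial $f\in C_c^\infty(X)$ one has $\widehat{e^{t\Delta}f}(\lambda) = \hat h_t^\sigma(\lambda)\,\hat f(\lambda)$ by Lemma \ref{f+g}. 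On the other hand, since $\varphi_{\lambda,\sigma}$ is an eigenfunction of $\Delta$ with eigenvalue $-(\lambda^2+\rho^2)$ and the Fourier transform diagonalises $\Delta$, the function $u(t):=\widehat{e^{t\Delta}f}(\lambda)$ solves the ODE $u'(t) = -(\lambda^2+\rho^2)u(t)$ with $u(0)=\hat f(\lambda)$, giving $u(t) = e^{-t(\lambda^2+\rho^2)}\hat f(\lambda)$. Comparing the two expressions and choosing $f$ with $\hat f(\lambda)\neq 0$ on a set with a limit point forces $\hat h_t^\sigma(\lambda) = e^{-t(\lambda^2+\rho^2)}$ for $\lambda\in\R$, and then for all $\lambda\in\C$ by analytic continuation, since both sides are holomorphic in $\lambda$.

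For the multiplier statement, once $\hat h_t^\sigma(\lambda)=e^{-t(\lambda^2+\rho^2)}$ is known, the fact that $e^{t\Delta}$ is a bounded operator on $L^p(X)$ for all $p\in[1,\infty]$ (cited from \cite{davies1990heat}) together with $e^{t\Delta}f = f * h_t(\sigma,\cdot)$ and the computation of $\widetilde{(f*g)}^\sigma = \tilde f^\sigma \cdot \hat g$ from Remark \ref{Bem2.4.5} shows directly that $e^{t\Delta}$ satisfies conditions 1--3 in the definition of an $L^p$-multiplier, with symbol $m_{e^{t\Delta}}(\lambda)=e^{-t(\lambda^2+\rho^2)}$. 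Boundedness of $e^{t\Delta}$ on $L^p$ is exactly what is needed to invoke the definition; this is the one external input.

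Finally, the integral representation of the heat kernel follows by applying the radial Fourier inversion formula (\ref{fourad}) to the radial $L^1$ function $h_t(\sigma,\cdot)$: one needs $\hat h_t^\sigma = e^{-t(\lambda^2+\rho^2)} \in L^1((0,\infty), C_0|\mathbf c(\lambda)|^{-2}\,d\lambda)$, which holds because of the Gaussian decay of $e^{-t\lambda^2}$ against the polynomially-bounded Plancherel density $|\mathbf c(\lambda)|^{-2}$ (the density grows at most polynomially by conditions C1--C4). Then $h_t(x,y) = C_0\int_0^\infty e^{-t(\lambda^2+\rho^2)}\varphi_{\lambda,x}(y)|\mathbf c(\lambda)|^{-2}\,d\lambda$ by (\ref{fourad}) and (\ref{defi:ef}).

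The main obstacle I anticipate is justifying that $\hat h_t^\sigma$ extends holomorphically to all of $\C$ and that the heat kernel itself has enough decay to make both the Fourier transform integral and the inversion integral absolutely convergent; this requires a quantitative Gaussian-type upper bound on $h_t(r)$, which on harmonic manifolds of purely exponential volume growth is available (e.g.\ from \cite{davies1990heat} and the on-diagonal/off-diagonal heat kernel estimates valid under Anosov geodesic flow), but needs to be cited carefully. The algebraic core of the argument — identifying the symbol as $e^{-t(\lambda^2+\rho^2)}$ via the eigenvalue equation for $\varphi_\lambda$ — is routine.
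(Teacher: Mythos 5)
Your argument is correct and rests on the same key fact as the paper's proof --- that $e^{-t(\lambda^2+\rho^2)}\varphi_{\lambda,\sigma}$ solves the heat equation --- but you package it differently. The paper reads the Fourier transform $\hat h_t^{\sigma}(\lambda)=\int_X h_t(\sigma,x)\varphi_{\lambda,\sigma}(x)\,dx$ directly as the solution of the Cauchy problem with initial datum $\varphi_{\lambda,\sigma}$ evaluated at the base point, so that $\hat h_t^{\sigma}(\lambda)=e^{-t(\lambda^2+\rho^2)}\varphi_{\lambda,\sigma}(\sigma)=e^{-t(\lambda^2+\rho^2)}$ in one line: no test functions, no convolution theorem, no cancellation of $\hat f$. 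Your detour through $\widehat{e^{t\Delta}f}=\hat h_t^{\sigma}\cdot\hat f$ for radial $f\in C_c^{\infty}(X)$ and the resulting ODE in $t$ is longer (and requires picking $f$ with $\hat f(\lambda)\neq 0$, which is harmless since $\hat f$ is holomorphic and hence non-vanishing off a discrete set), but it has two genuine advantages: it sidesteps the uniqueness question for the heat equation with the initial datum $\varphi_{\lambda,\sigma}$ --- which is unbounded for non-real $\lambda$, a point the paper's appeal to ``$h_t$ is the fundamental solution'' quietly glosses over --- and it makes you address explicitly both the holomorphic extension of $\hat h_t^{\sigma}$ to all of $\C$ (Gaussian decay of $h_t$ against $A(r)\asymp e^{2\rho r}$ and the exponential bounds on $\varphi_{\lambda}$) and the absolute convergence of the inversion integral (polynomial growth of $\vert\mathbf{c}(\lambda)\vert^{-2}$), neither of which the paper's proof discusses. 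The multiplier statement and the integral representation are handled the same way in both arguments: boundedness of $e^{t\Delta}$ on $L^p(X)$ together with the convolution structure of the heat semi-group, and radial Fourier inversion, respectively.
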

 \begin{proof}
 Let $x,y,\sigma\in X$. Consider a radial eigenfunction of the Laplacian  $\varphi_{\lambda,\sigma}$ with $\Delta \varphi_{\lambda,\sigma}=-(\lambda^2+\rho^2)\varphi_{\lambda,\sigma}$
 then 
 \begin{align*}
 \frac{\partial}{\partial t} e^{-t(\lambda^2+\rho^2)}\varphi_{\lambda,\sigma}=\Delta \varphi_{\lambda,\sigma}
 \end{align*}
 hence $u(t,x):=e^{-(\lambda^2+\rho^2)}\varphi_{\lambda,\sigma}(x)$ is a solution of the heat equation with $u(0,x)=\varphi_{\lambda,\sigma}(x)$.
 Therefore we have, since $h_t$ is the fundamental solution of the heat equation:
 \begin{align*}
 \hat{h}_t^{\sigma}(\lambda)&=\int_X \varphi_{\lambda,\sigma}(x)h_t(\sigma,x)\,dx\\
 &=u(t,\sigma)\\
 &=e^{-(\lambda^2+\rho^2)}\varphi_{\lambda,\sigma}(\sigma)\\
 &=e^{-(\lambda^2+\rho^2)}
 \end{align*}
 Showing the first two assertions. 
The second part follows now by applying the inverse Fourier transform to the above.
 \end{proof}
Note that Theorem \ref{hyperkernel} can also be deduced from \cite{PS15}.   

\begin{satz}[\cite{MR4443685}]\label{thm:heatchaos}
Let $X$ be a simply connected non-compact harmonic manifold of rank one with mean curvature of the horopheres $2\rho$ and let $2<p<\infty$ and $1<q<2$  be such that $\frac{1}{p}+\frac{1}{q}=1$. Then there exists a constant $c_p:=\frac{4\rho^2}{pq}$ such that the action of the shifted heat semi-group $(e^{ct}e^{t\Delta})_{t>0}$ on $L^p(X)$ is chaotic in the sense of Devaney for all $c\in \C$ with $\operatorname{Re}c>c_p$. In fact, for any $t_0>0$ the operator $(e^{ct_0}e^{t_0\Delta})$ on $L^p(X)$ is chaotic, in the sense that the induced dynamical system is chaotic, for all $c\in \C$ with $\operatorname{Re} c>c_p$.
\end{satz}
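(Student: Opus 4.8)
The plan is to reduce the statement to Theorem \ref{chaotic1} together with Theorem \ref{hyperkernel}; the only genuine work is a short computation identifying the threshold $c_p$, after which both assertions of the theorem fall out.

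First I would record that, by Theorem \ref{hyperkernel}, for every $t_0>0$ the operator $e^{t_0\Delta}\colon L^p(X)\to L^p(X)$ is an $L^p$-multiplier whose symbol is the entire function $m(\lambda)=e^{-t_0(\lambda^2+\rho^2)}$. This symbol is non-constant, so $e^{t_0\Delta}$ is not a scalar multiple of the identity, and Theorem \ref{chaotic1} applies. Since $m$ is nowhere zero, it tells us that $\frac{1}{\nu}\,e^{t_0\Delta}$ is chaotic in the sense of Devaney on $L^p(X)$ for every $\nu\in\C$ such that $\lvert\nu\rvert=\lvert m(\lambda)\rvert$ for some $\lambda\in S_p$.

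The next step is to compute the set of attainable moduli $\{\lvert m(\lambda)\rvert:\lambda\in S_p\}$. Writing $\lambda=a+ib$ with $a\in\R$ and $\lvert b\rvert<(1-2/p)\rho$, one has $\operatorname{Re}(\lambda^2+\rho^2)=a^2-b^2+\rho^2$ and hence $\lvert m(\lambda)\rvert=e^{-t_0(a^2-b^2+\rho^2)}$. As $(a,b)$ ranges over this connected open set, the exponent $a^2-b^2+\rho^2$ sweeps out the open half-line $(c_p,\infty)$, where $c_p=\rho^2\bigl(1-(1-2/p)^2\bigr)=\frac{4(p-1)}{p^2}\rho^2=\frac{4\rho^2}{pq}$; the infimum $c_p$ is approached by letting $a=0$ and $\lvert b\rvert\uparrow(1-2/p)\rho$ and is not attained, precisely because the strip $S_p$ is open. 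Therefore $\{\lvert m(\lambda)\rvert:\lambda\in S_p\}=(0,e^{-t_0c_p})$, and so $\frac{1}{\nu}\,e^{t_0\Delta}$ is chaotic whenever $0<\lvert\nu\rvert<e^{-t_0c_p}$.

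Now fix $c\in\C$ with $\operatorname{Re}c>c_p$ and an arbitrary $t_0>0$, and set $\nu=e^{-ct_0}$; then $\lvert\nu\rvert=e^{-t_0\operatorname{Re}c}<e^{-t_0c_p}$, so by the previous paragraph $\frac{1}{\nu}\,e^{t_0\Delta}=e^{ct_0}e^{t_0\Delta}$ is chaotic, which is the ``in fact'' assertion; note that $c_p$ is independent of $t_0$, which is what makes the statement uniform in $t_0$. To pass to the semigroup $(e^{ct}e^{t\Delta})_{t>0}$, fix any $t_0>0$ and write $S_{t_0}=e^{ct_0}e^{t_0\Delta}$: a hypercyclic vector of $S_{t_0}$ already has dense orbit under $\{S_{nt_0}:n\ge 1\}$ and hence under the whole semigroup, while every periodic point of $S_{t_0}$ is a periodic point of the semigroup; by the remark following the definition of chaos this yields that the semigroup acts chaotically. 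I do not expect a substantive obstacle: once Theorem \ref{chaotic1} is granted the argument is purely computational, the only points needing care being the verification that the infimum of $a^2-b^2+\rho^2$ over the open strip $S_p$ equals exactly $c_p$, and the observation that $\operatorname{Im}c$ is irrelevant since it merely rotates $\nu$ on the circle $\lvert\nu\rvert=\lvert m(\lambda)\rvert$ that Theorem \ref{chaotic1} already allows.
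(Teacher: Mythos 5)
Your argument is correct: the reduction to Theorem \ref{chaotic1} via Theorem \ref{hyperkernel}, the computation that $\inf\{\operatorname{Re}(\lambda^2+\rho^2)\mid \lambda\in S_p\}=\rho^2\bigl(1-(1-2/p)^2\bigr)=\frac{4\rho^2}{pq}$ is not attained on the open strip, and the passage from the time-$t_0$ map to the semigroup are all sound. Note that the paper itself states this theorem as a cited result from \cite{MR4443685} and gives no proof, but your derivation is exactly the one its framework (Theorems \ref{hyperkernel} and \ref{chaotic1}) is set up to deliver, so there is nothing to object to.
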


Next, we are going to see why the conditions $p>2$ in Theorem \ref{thm:heatchaos} is sharp. 

\begin{satz}\label{thm:nochoas2}
Let $X$ be a simply connected non-compact harmonic manifold of rank one, $1\leq p\leq 2$. Then for every $c\in \C$,
$$e^{ct_0}e^{t_0\Delta}:L^p(X)\to L^p(X)$$ is not chaotic. 
In particular, for $1\leq p <2$ it  is not hypercyclic and does not have dens periodic points. Further   $T$ can not have periodic points in $L^2(X)$. 
\end{satz}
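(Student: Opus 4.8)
The plan is to deduce Theorem~\ref{thm:nochoas2} directly from Theorem~\ref{thm:L^ppleq2} (equivalently, from Propositions~\ref{prop:nochoasp<2}, \ref{prop:nochoasp=2} and \ref{prop:p=2noperiodic}), by checking that the shifted heat operator $e^{ct_0}e^{t_0\Delta}$ satisfies the hypotheses of that theorem. First I would record, using Theorem~\ref{hyperkernel}, that $e^{t_0\Delta}$ is an $L^p$-multiplier with symbol $e^{-t_0(\lambda^2+\rho^2)}$, so that $e^{ct_0}e^{t_0\Delta}$ is an $L^p$-multiplier with symbol
\begin{align*}
m(\lambda)=e^{ct_0}e^{-t_0(\lambda^2+\rho^2)}=e^{t_0(c-\rho^2-\lambda^2)}.
\end{align*}
This is an entire function of $\lambda$, hence in particular holomorphic on any strip $S_q$ for $1\le p<2$ and continuous on $\R$ for $p=2$; it is also bounded on $\R$ (indeed $|m(\lambda)|=e^{t_0(\operatorname{Re}c-\rho^2-\lambda^2)}\le e^{t_0(\operatorname{Re}c-\rho^2)}$), so the operator is genuinely bounded on $L^p(X)$ and condition~(\ref{con:T=m}) holds. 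Thus Theorem~\ref{thm:L^ppleq2} applies verbatim.

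For the case $1\le p<2$, Proposition~\ref{prop:nochoasp<2} then gives immediately that $e^{ct_0}e^{t_0\Delta}$ is neither hypercyclic nor has periodic points: hypercyclicity fails because $\varphi_{\lambda,\sigma}$ is an eigenfunction of the adjoint for $\lambda\in S_p^{\circ}$ (Proposition~\ref{prop:varphieigenfunction}) together with Lemma~\ref{lemma:chos2}, and periodic points fail since $|m(\lambda)|=e^{t_0(\operatorname{Re}c-\rho^2-\lambda^2)}$ is visibly non-constant on any strip, so $m(\lambda)^n=1$ on an open set is impossible by the open mapping theorem. For $p=2$, Proposition~\ref{prop:nochoasp=2} rules out hypercyclicity, and Proposition~\ref{prop:p=2noperiodic} rules out periodic points provided $|m_T(\lambda)|$ is non-constant, which again holds here since $|m(\lambda)|=e^{t_0(\operatorname{Re}c-\rho^2-\lambda^2)}$ strictly decreases in $|\lambda|$. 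This yields all the asserted conclusions.

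One small point I would address explicitly is the degenerate/trivial cases: the cited propositions are stated for \emph{nontrivial} multipliers, i.e.\ operators that are not scalar multiples of the identity. Since $m(\lambda)=e^{t_0(c-\rho^2-\lambda^2)}$ is non-constant for every $t_0>0$, $e^{ct_0}e^{t_0\Delta}$ is never a scalar multiple of the identity, so nontriviality is automatic and no separate treatment is needed. I would also note that the statement "$e^{ct_0}e^{t_0\Delta}$ is not chaotic" for $p=2$ follows already from the failure of hypercyclicity alone, since chaos requires topological transitivity, which by Birkhoff's transitivity theorem is equivalent to hypercyclicity on a separable Banach space.

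I do not expect a serious obstacle here: the theorem is essentially a corollary of the machinery already assembled, and the only thing to be careful about is confirming the hypotheses (boundedness on $L^p$, holomorphy of the symbol on the correct strip $S_q$ with $q$ the conjugate exponent, non-constancy of $|m|$) rather than proving anything new. The mildest subtlety is making sure the symbol is holomorphic on the strip required by Theorem~\ref{thm:L^ppleq2} for the full range $1\le p<2$; but since $m$ is entire this is immediate, so the proof is short.
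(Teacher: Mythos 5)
Your proposal is correct and follows essentially the same route as the paper: both verify via Theorem~\ref{hyperkernel} that $e^{ct_0}e^{t_0\Delta}$ is an $L^p$-multiplier satisfying condition~(\ref{con:T=m}) with the entire symbol $e^{ct_0}e^{-t_0(\lambda^2+\rho^2)}$, then invoke Theorem~\ref{thm:L^ppleq2} for $1\leq p<2$ and Proposition~\ref{prop:p=2noperiodic} (using the non-constancy of $\lvert m(\lambda)\rvert$) for the $p=2$ periodic-point claim. Your extra remarks on boundedness and nontriviality are sound and only make explicit what the paper leaves implicit.
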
\label{thm:c_p}
\begin{proof}
By Theorem \ref{hyperkernel} we have that for every $c\in \C$ $e^{ct_0}e^{t_0\Delta }$ satisfied condition (\ref{con:T=m}) and since the heat kernel is a radial function we have that $e^{ct_0}e^{t_0\Delta }$  is a $L^p$-multiplier. Therefore it satisfies the conditions of Theorem \ref{thm:L^ppleq2} for $1\leq p< 2$. This yields the claim in this range. 
And since $\lvert e^{ct_0} e^{-(\lambda^2+\rho^2)t_0}\rvert$ is not constant in $\lambda$,  $e^{ct_0}e^{t_0\Delta}$ also satisfies the conditions of Theorem \ref{thm:L^ppleq2} for $p=2$ and  by \ref{prop:p=2noperiodic} has no periodic points in this case.
\end{proof}

We now want to show that $c_p$ in Theorem \ref{thm:heatchaos} is optimal.
\begin{lemma}\label{lemma:spectralbound}
Let $p\neq 2$, fix $t>0$ and define $T:=e^{t\Delta}:L^p(X)\to L^p(X)$. Then the operator norm $\lVert T\rVert_{L^p}$ is bounded by $\hat{h}_t(-i\gamma_p\rho)$.
\end{lemma}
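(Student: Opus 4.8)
The plan is to reduce the bound to a Herz--type majorization for the positive radial convolution kernel $h_t$ and then to evaluate the resulting constant using Theorem~\ref{hyperkernel}. First I would reduce to the case $p>2$: since $e^{t\Delta}$ is self-adjoint on $L^2(X)$, its Banach-space adjoint on $L^{p'}(X)$, $\tfrac1p+\tfrac1{p'}=1$, is again $e^{t\Delta}$, so $\lVert e^{t\Delta}\rVert_{L^p}=\lVert e^{t\Delta}\rVert_{L^{p'}}$, and since $\gamma_p=\lvert 1-2/p\rvert=\gamma_{p'}$ while $\widehat h_t$ is even, the target $\widehat h_t(-i\gamma_p\rho)=e^{-t\rho^2(1-\gamma_p^2)}$ is unchanged under $p\leftrightarrow p'$. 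So assume $p>2$ and write $\gamma:=\gamma_p=1-2/p\in(0,1)$. By Theorem~\ref{hyperkernel} the operator $e^{t\Delta}$ acts as convolution $f\mapsto f*h_t$ with $h_t\ge0$ radial around $\sigma$, and $\varphi_{i\gamma\rho,\sigma}>0$ is the radial eigenfunction of $\Delta$ with eigenvalue $-\rho^2(1-\gamma^2)$; note that $i\gamma\rho$ lies exactly on the boundary of the strip $S_p$ where $\varphi_{\cdot,\sigma}$ just fails to be in $L^p(X)$ (Lemma~\ref{lemma2.4.9}).

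The core step is the majorization
\[
\lVert f*h_t\rVert_p\ \le\ \Bigl(\int_X h_t(x)\,\varphi_{i\gamma\rho,\sigma}(x)\,dx\Bigr)\lVert f\rVert_p ,\qquad f\in C_c^\infty(X).
\]
To prove it I would argue by duality: since $e^{t\Delta}$ is positivity preserving (and $\lvert f*h_t\rvert\le\lvert f\rvert*h_t$) it suffices to test $f*h_t$, for $f\ge0$, against $g\in C_c^\infty(X)$ with $g\ge0$, $\lVert g\rVert_{p'}\le1$, which reduces the claim to a bound for $\iint_{X\times X}f(y)\,h_t(d(x,y))\,g(x)\,dx\,dy$. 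Distributing the symmetric kernel $h_t(d(x,y))$ between the two legs and inserting the weight $\varphi_{i\gamma\rho,\sigma}$, one leg becomes the convolution of $\varphi_{i\gamma\rho,\sigma}$ with a power of $h_t$, which by the functional equation of the radial eigenfunctions $\varphi_{\lambda,\sigma}$ (namely $\int_X \tau_x\varphi_{\lambda,\sigma}\cdot k=\varphi_{\lambda,\sigma}(x)\,\widehat k(\lambda)$ for radial $k$; compare the integral representation \eqref{radialeigen}) reproduces $\varphi_{i\gamma\rho,\sigma}$ up to the scalar $\int_X h_t\varphi_{i\gamma\rho,\sigma}$, while the other leg is controlled by Hölder's inequality together with the sharp two-sided size $\varphi_{i\gamma\rho,\sigma}(x)\asymp e^{-(1-\gamma)\rho\, d(\sigma,x)}$ coming from the $\mathbf c$-function decomposition \eqref{eigende}. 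Equivalently, this is a weighted Schur estimate for the kernel $h_t(d(x,y))$ with weight $\varphi_{i\gamma\rho,\sigma}$.

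Granting the majorization, the conclusion is immediate. For radial $k$ one has $\int_X k(x)\,\varphi_{\mu,\sigma}(x)\,dx=\widehat k(\mu)$ for every $\mu\in\C$, by the (analytically continued) definition of the radial Fourier transform, so Theorem~\ref{hyperkernel} gives
\[
\int_X h_t(x)\,\varphi_{i\gamma\rho,\sigma}(x)\,dx=\widehat h_t(i\gamma\rho)=e^{-t\left((i\gamma\rho)^2+\rho^2\right)}=e^{-t\rho^2(1-\gamma^2)}=\widehat h_t(-i\gamma_p\rho),
\]
the last equality because $\widehat h_t$ is even. Together with the core estimate and the density of $C_c^\infty(X)$ in $L^p(X)$ this yields $\lVert e^{t\Delta}\rVert_{L^p}\le\widehat h_t(-i\gamma_p\rho)$.

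The main obstacle is getting the \emph{exact} constant in the core estimate, i.e.\ the Herz majorization itself. A crude domination $h_t(r)\le h_t(0)\,\varphi_0(r)$ combined with Lemma~\ref{lemma2.4.9} and the Young inequality, or Riesz--Thorin interpolation of the trivial bounds $\lVert e^{t\Delta}\rVert_{L^1},\lVert e^{t\Delta}\rVert_{L^\infty}\le1$ against the spectral bound $\lVert e^{t\Delta}\rVert_{L^2}=e^{-t\rho^2}$, only produces a bound of the form $C\,h_t(0)$, respectively $e^{-t\rho^2(1-\gamma_p)}$ --- both strictly larger than $e^{-t\rho^2(1-\gamma_p^2)}$ and too weak to pin down the critical exponent $c_p=\tfrac{4\rho^2}{pq}$. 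Extracting the sharp constant forces the use of the precise exponential rate of $\varphi_{i\gamma_p\rho,\sigma}$, equivalently of the behaviour of $\mathbf c$ on $\partial S_p$; this is the single genuinely technical point, everything else being Hölder's inequality, density, and Theorem~\ref{hyperkernel}.
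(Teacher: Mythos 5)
Your reduction to $p>2$ by duality and your final computation $\widehat{h}_t(i\gamma_p\rho)=e^{-t\rho^2(1-\gamma_p^2)}=e^{-c_pt}=\widehat{h}_t(-i\gamma_p\rho)$ are both correct and consistent with the paper. However, the entire content of the lemma sits in your ``core step'', the Herz-type majorization $\lVert f*h_t\rVert_p\le\bigl(\int_X h_t\varphi_{i\gamma_p\rho,\sigma}\bigr)\lVert f\rVert_p$, and that step is asserted rather than proved. The sketch you give does not close it: a weighted Schur/H\"older argument for an $L^p$ bound with $p\neq2$ requires estimates of the form $\int_X h_t(d(x,y))\,\varphi_{i\gamma\rho,\sigma}(y)^{p'}\,dy\le C\,\varphi_{i\gamma\rho,\sigma}(x)^{p'}$ (and the companion inequality with exponent $p$), and powers $\varphi^{a}$ with $a\neq1$ are \emph{not} eigenfunctions of the convolution, so the identity $k*\varphi_{\lambda,\sigma}=\widehat{k}(\lambda)\varphi_{\lambda,\sigma}$ only handles one ``leg'' to the first power. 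In the symmetric-space proof of Herz's principe de majoration this is overcome by the product formula $\varphi_\lambda(g)\varphi_\lambda(h)=\int_K\varphi_\lambda(gkh)\,dk$, which genuinely uses the group structure; on a rank-one harmonic manifold the corresponding product formula lives on the Ch\'ebli--Trim\'eche hypergroup of radial functions and does not immediately control the two-variable kernel $h_t(d(x,y))$ tested against non-radial $f$ and $g$. The crude substitutes you mention (domination by $\varphi_0$, or Riesz--Thorin between $L^1,L^\infty$ and $L^2$) indeed miss the sharp constant, as you note yourself, so as written the argument has a real gap precisely at the only nontrivial point.

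For comparison, the paper takes a much softer route that avoids the majorization entirely: after the same reduction to $p>2$, it invokes Lemma \ref{lemma:dens} (density in $L^p(X)$ of the span of the translates $\tau_x\varphi_{\lambda,\sigma}$ with $\lambda\in S_p$), the fact that the $L^p$-spectrum of $\Delta$ lies in the parabolic region parametrised by $S_p$, and Proposition \ref{prop:varphieigenfunction} together with Theorem \ref{hyperkernel} to get $T\varphi_{\lambda,\sigma}=e^{-t(\lambda^2+\rho^2)}\varphi_{\lambda,\sigma}$ for $\lambda\in S_p$; it then bounds $\lVert T\rVert_{L^p}$ by $\sup_{\lambda\in S_p}\lvert e^{-t(\lambda^2+\rho^2)}\rvert$, which is attained at $\lambda=-i\gamma_p\rho$. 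If you want to salvage your approach you would need to actually prove the Herz majorization for rank-one harmonic manifolds (via the hypergroup product formula and a careful passage from radial to general test functions); otherwise the spectral/eigenfunction argument of the paper is the shorter path to the same constant.
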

\begin{proof}
Assume $p>2$ since then $p<2$ follows from duality. By Lemma \ref{lemma:dens} the set $V:=\operatorname{Span}\{\tau_x\varphi_{\lambda,\sigma}\mid x\in X,\lambda\in S_p\}$ is dense in $L^p(X)$ and by \cite{10.1215/S0012-7094-89-05836-5} the spectrum of the Laplacian on $L^p(X)$ is contained in $S_p$.
Furthermore by Proposition \ref{prop:varphieigenfunction} and Theorem \ref{hyperkernel} we have that  $T\varphi_{\lambda,\sigma}=e^{-t(\lambda^2+\rho^2)} \varphi_{\lambda,\sigma}$.

Therefore we can conclude:
\begin{align*}
\lVert T \rVert_{L^p}&\leq\sup_{\lambda\in S_p} e^{-t(\lambda^2+\rho^2)}\\
&=\sup_{\lambda\in S_p} \hat{h}_t(\lambda)\\
&\leq \hat{h}_t(-i\gamma_p\rho)\\
&=e^{-c_pt}.
\end{align*}
\end{proof}
\begin{satz}
Let $1\leq p\leq \infty$ and $c\in\C$ with $\operatorname{Re}(c)\leq c_p$ then $e^{ct}e^{t\Delta}:L^p(X)\to L^p(X)$ can not be chaotic. In particular:
\begin{enumerate}
\item $e^{ct}e^{t\Delta}$ is not hypercylic if $\operatorname{Re}(c) \leq c_p$.
\item $e^{ct}e^{t\Delta}$ has no periodic points if $\operatorname{Re}(c)<c_p$. 
\end{enumerate}
\end{satz}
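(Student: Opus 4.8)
The plan is to reduce everything to a single operator-norm estimate. I will show that under the hypothesis $\operatorname{Re}(c)\le c_p$ the operator $T:=e^{ct}e^{t\Delta}$ on $L^p(X)$ satisfies $\lVert T\rVert_{L^p}\le e^{(\operatorname{Re}(c)-c_p)t}\le 1$, with the stronger bound $\lVert T\rVert_{L^p}<1$ whenever $\operatorname{Re}(c)<c_p$. Once this is available, both assertions become formal. A power-bounded operator on the infinite-dimensional space $L^p(X)$ cannot be hypercyclic: any orbit then satisfies $\lVert T^n x\rVert_p\le\lVert x\rVert_p$, so it is bounded and hence not dense; consequently $T$ is not hypercyclic, and since chaos implies topological transitivity and hence hypercyclicity, $T$ is not chaotic. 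If moreover $\lVert T\rVert_{L^p}<1$ and $T^n g=g$ for some $n$, then $\lVert g\rVert_p\le\lVert T\rVert_{L^p}^{\,n}\lVert g\rVert_p\to 0$, forcing $g=0$, so $T$ has no nontrivial periodic points.

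For the norm estimate I would split into $p\neq 2$ and $p=2$. For $p\neq 2$, Lemma~\ref{lemma:spectralbound} gives $\lVert e^{t\Delta}\rVert_{L^p}\le\hat h_t(-i\gamma_p\rho)$, and, exactly as in the computation in its proof, $\hat h_t(-i\gamma_p\rho)=e^{-t\rho^2(1-\gamma_p^2)}=e^{-c_p t}$, using $1-\gamma_p^2=1-(1-2/p)^2=4/(pq)$. Multiplying by the scalar $e^{ct}$ yields $\lVert T\rVert_{L^p}\le e^{\operatorname{Re}(c)t}e^{-c_p t}$. For $p=2$ Lemma~\ref{lemma:spectralbound} does not apply, so I would instead combine Theorem~\ref{hyperkernel} with the Plancherel theorem: since $e^{t\Delta}$ acts on the Fourier side by multiplication with $e^{-t(\lambda^2+\rho^2)}$ for $\lambda\in\R$, one gets $\lVert e^{t\Delta}\rVert_{L^2}=\sup_{\lambda\in\R}e^{-t(\lambda^2+\rho^2)}=e^{-t\rho^2}=e^{-c_2 t}$ (note $c_2=\rho^2$), and again $\lVert T\rVert_{L^2}\le e^{(\operatorname{Re}(c)-c_2)t}$.

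The endpoints $p=1$ and $p=\infty$ deserve a remark: there $c_p=0$ and $e^{t\Delta}$ is a positive, mass-preserving contraction, so $\lVert T\rVert_{L^p}\le e^{\operatorname{Re}(c)t}$, which is $\le 1$ precisely when $\operatorname{Re}(c)\le 0=c_p$ and $<1$ when $\operatorname{Re}(c)<c_p$; for $p=\infty$ one should also note that $L^\infty(X)$ is not separable so hypercyclicity cannot occur in any case, but the power-bounded argument still excludes dense orbits and the strict-contraction argument still excludes nontrivial periodic points. I do not expect any genuinely hard analysis here; the one point to get right is the bookkeeping at the boundary case $\operatorname{Re}(c)=c_p$, where the estimate only gives $\lVert T\rVert_{L^p}\le 1$ — enough to rule out hypercyclicity and chaos, but not periodic points — which is exactly why item~(2) carries the strict inequality while item~(1) does not.
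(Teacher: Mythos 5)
Your proposal is correct and follows essentially the same route as the paper: the operator-norm bound $\lVert e^{ct}e^{t\Delta}\rVert_{L^p}\le e^{\operatorname{Re}(c)t}e^{-c_pt}$ via Lemma \ref{lemma:spectralbound}, bounded orbits ruling out hypercyclicity (and hence chaos), and the strict contraction ruling out periodic points when $\operatorname{Re}(c)<c_p$. You are in fact slightly more careful than the paper at $p=2$ (where Lemma \ref{lemma:spectralbound} is stated only for $p\neq 2$ and you substitute the Plancherel argument) and at the endpoints $p=1,\infty$, which is a welcome tightening rather than a different method.
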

\begin{proof}
Assume $\operatorname{Re}(c) \leq c_p$.
We may assume $1\leq p<\infty$ since $h_t$ is a continuous function and therefore the convolution with any multiple of $e^{ct}\cdot h_t$ can not define a hypercyclic operator on $L^{\infty}(X)$. 
Now we have: $\hat{h}_t(-i\gamma_p\rho)=e^{-c_p t}$. Hence by the Lemma \ref{lemma:spectralbound} we have:
\begin{align}
\lVert e^{ct}e^{t\Delta}\rVert_{L^p\to L^p}\leq e^{\operatorname{Re} (c)t}e^{-c_p t} \leq 1.
\end{align}
Hence all orbits are bounded and therefore $e^{ct}e^{t\Delta}$ can not be hypercyclic. 
Now suppose $\operatorname{Re}(c)<c_p$ and $1\leq p\leq \infty$, then we have:
\begin{align}
\lVert e^{ct}e^{t\Delta}\rVert_{L^p\to L^p}\leq e^{\operatorname{Re} (c)t}e^{-c_p t} < 1.
\end{align}
Therefore $e^{ct}e^{t\Delta}$ is a contraction and can not have periodic points. 
\end{proof}
To conclude this section we remark that there are similar results for symmetric spaces and harmonic $NA$ groups \cite{ji_weber_2010} \cite{Sarkar_2013}.
\subsection{The Resolvent}\label{sec:resolvent}
 In this section, we present an example of an $L^p$-multiplier that does not need scaling to be chaotic in the form of the resolvent near the spectrum. For this purpose, we first need to generalise some results from \cite{resolvent} to harmonic manifolds of rank one. 
Let $(X,g)$ be a simply connected harmonic manifold of rank one. $\Delta:L^p(X)\to L^p(X)$ the $p$-Laplacian. For $z\in\C$ the resolvent is given by $R:(z)=(\Delta-z\operatorname{id})^{-1}$.
\begin{satz}\label{thm:reschaos}
Let $p>2$ then there is a $z\in \C$ such that the action of $R(z):L^p(X)\to\L^p(X)$ is chaotic. 
\end{satz}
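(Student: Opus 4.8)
The plan is to exhibit a value $z\in\C$ for which the resolvent $R(z)=(\Delta-z\,\id)^{-1}$, acting on $L^p(X)$ for $p>2$, is an $L^p$-multiplier whose symbol satisfies the Godefroy--Shapiro criterion (Theorem \ref{Godfrey}), and then conclude chaoticity exactly as in the proof of Theorem \ref{chaotic1}. First I would recall from \cite{10.1215/S0012-7094-89-05836-5} that the $L^p$-spectrum of $\Delta$ is the closed parabolic region $P_p$ bounded by the curve $\{-(\lambda^2+\rho^2)\mid \lambda\in\partial S_p\}$, i.e.\ the image under $\lambda\mapsto-(\lambda^2+\rho^2)$ of the closed strip $\overline{S_p}$. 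For $z$ outside this region the resolvent exists as a bounded operator on $L^p(X)$; moreover, since $R(z)$ commutes with the Laplacian it commutes with convolution by radial functions and with translations, so conditions 1--3 in the definition of an $L^p$-multiplier hold, and by Proposition \ref{prop:varphieigenfunction} (applied to $e^{t\Delta}$, or directly via the functional calculus on the dense span $V$ of Lemma \ref{lemma:dens}) we get
$$R(z)\,\varphi_{\lambda,x}=\frac{1}{-(\lambda^2+\rho^2)-z}\,\varphi_{\lambda,x},\qquad \lambda\in S_p,\ x\in X.$$
Thus $R(z)$ is an $L^p$-multiplier with symbol $m_{R(z)}(\lambda)=\bigl(-(\lambda^2+\rho^2)-z\bigr)^{-1}$, holomorphic and non-constant on $S_p$.

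Next I would choose $z$ so that the two sets
$$M^{+}=\operatorname{Span}\{\varphi_{\lambda,x}\mid \lambda\in S_p,\ |m_{R(z)}(\lambda)|>1\},\qquad M^{-}=\operatorname{Span}\{\varphi_{\lambda,x}\mid \lambda\in S_p,\ |m_{R(z)}(\lambda)|<1\}$$
are both dense in $L^p(X)$. The condition $|m_{R(z)}(\lambda)|>1$ reads $|-(\lambda^2+\rho^2)-z|<1$, i.e.\ the eigenvalue $-(\lambda^2+\rho^2)$ lies in the open unit disc centred at $z$; the condition $|m_{R(z)}(\lambda)|<1$ is the complementary inequality. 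Writing $\Lambda_p:=\{-(\lambda^2+\rho^2)\mid\lambda\in S_p\}$ for the open parabolic region (the interior of the $L^p$-spectrum), I need $z$ such that both $\{w\in\Lambda_p\mid |w-z|<1\}$ and $\{w\in\Lambda_p\mid |w-z|>1\}$ contain a subset of $\Lambda_p$ with a limit point in $\Lambda_p$; then each corresponding set of $\lambda$'s has a limit point in $S_p$, and Lemma \ref{lemma:dens} gives density of $M^\pm$ in $L^p(X)$. Since $\Lambda_p$ is an unbounded open set, it suffices to take $z$ on the boundary curve $\partial\Lambda_p$ (the $L^p$-spectral boundary): then arbitrarily close to $z$ there are points of $\Lambda_p$ inside the unit disc about $z$ and also points of $\Lambda_p$ outside it (the boundary curve is not a circle of radius $1$ about $z$, so locally near $z$ the region $\Lambda_p$ meets both $\{|w-z|<1\}$ and $\{|w-z|>1\}$), and each such cluster set is infinite with a limit point in the open region. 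With this choice, $R(z)$ (which is still bounded on $L^p$ because $z\notin$ the $L^p$-spectrum only on the boundary—here one must be slightly careful, see below) has $M^+$ and $M^-$ dense, so by Theorem \ref{Godfrey} it is topologically transitive (indeed mixing), and the density of periodic points follows by the same eigenfunction argument as in Theorem \ref{chaotic1} (periodic points are spanned by the $\varphi_{\lambda,x}$ with $m_{R(z)}(\lambda)$ a root of unity, and these $\lambda$ also cluster in $S_p$). Hence the action of $R(z)$ on $L^p(X)$ is chaotic in the sense of Devaney.

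The main obstacle is the choice of $z$ relative to the $L^p$-spectrum: $R(z)$ is only a \emph{bounded} operator when $z$ lies in the resolvent set, i.e.\ \emph{outside} the closed parabolic spectral region, yet to make $|m_{R(z)}(\lambda)|$ exceed $1$ on a large enough set of $\lambda\in S_p$ one wants $z$ close to the spectrum. The resolution is that the eigenvalue curve $\lambda\mapsto-(\lambda^2+\rho^2)$ for $\lambda\in S_p$ sweeps out the \emph{open} region $\Lambda_p$, whose closure is the spectrum; one picks $z$ just outside $\overline{\Lambda_p}$ but within distance $<1$ of the boundary vertex $-\rho^2$ (note $\gamma_p\to1$ forces the vertex of the parabola to sit at $-\rho^2$ and the region to open leftwards). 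For such $z$, the disc $\{|w-z|<1\}$ still intersects $\Lambda_p$ in a set with a limit point in $\Lambda_p$ (since $z$ is within distance $1$ of $\partial\Lambda_p$ and $\Lambda_p$ is open with $\partial\Lambda_p$ a smooth non-circular curve), while the unbounded complement $\{w\in\Lambda_p\mid|w-z|>1\}$ trivially has limit points in $\Lambda_p$ because $\Lambda_p$ is unbounded. Verifying carefully that such a $z$ exists is the one genuinely geometric computation — it amounts to checking that the open left-opening parabola $\Lambda_p$ comes within Euclidean distance strictly less than $1$ of some exterior point, which is immediate since $\partial\Lambda_p$ is a nondegenerate curve. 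Everything else (the multiplier property, the eigenfunction formula for $R(z)$, density via Lemma \ref{lemma:dens}, and the passage to Devaney chaos via Theorem \ref{Godfrey}) is a direct transcription of the arguments already given for the heat semigroup.
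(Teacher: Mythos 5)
Your proposal is correct and follows essentially the same route as the paper: identify the symbol $-\bigl((\lambda^2+\rho^2)+z\bigr)^{-1}$, note that $R(z)$ is an $L^p$-multiplier via the heat semigroup, and choose $z$ in the resolvent set close enough to the $L^p$-spectrum that the symbol takes modulus both above and below $1$ on $S_p$. The only difference is one of packaging: the paper gets chaos by finding a single $\lambda_0\in S_p$ with $\lvert m_{R(z)}(\lambda_0)\rvert=1$ via the intermediate value theorem and then citing Theorem \ref{chaotic1} with $\nu=1$, whereas you inline the proof of that theorem by building the dense spans $M^{\pm}$ and invoking the Godefroy--Shapiro criterion directly.
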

\begin{lemma}\label{lemma:heatintegral}
We have $R(z)=-\int_0^{\infty} e^{-zt}e^{t\Delta}$ if the integral on the right-hand side converges as a formal integral. 
\end{lemma}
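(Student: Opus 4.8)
The statement to prove is Lemma~\ref{lemma:heatintegral}: $R(z) = -\int_0^\infty e^{-zt} e^{t\Delta}\,dt$ whenever the right-hand side converges as a formal integral.

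\textbf{Approach.} The plan is to verify directly that the operator $S := -\int_0^\infty e^{-zt} e^{t\Delta}\,dt$ is a two-sided inverse of $\Delta - z\,\mathrm{id}$, using the semigroup property of $(e^{t\Delta})_{t>0}$ and the fundamental theorem of calculus. This is the standard Laplace-transform representation of the resolvent for a semigroup generator, and the proof is formal manipulation justified by the assumed convergence. First I would recall that $(e^{t\Delta})_{t>0}$ is a strongly continuous semigroup on $L^p(X)$ with generator $\Delta$ (on its domain), as recorded at the beginning of Section~\ref{sec:chosheat} via \cite{davies1990heat}; in particular $\frac{d}{dt} e^{t\Delta} f = \Delta e^{t\Delta} f = e^{t\Delta} \Delta f$ for $f$ in the domain of $\Delta$, and $e^{t\Delta}\to \mathrm{id}$ strongly as $t\to 0^+$.

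\textbf{Key steps.} Apply $(\Delta - z\,\mathrm{id})$ to $S f$ for $f \in \mathrm{dom}(\Delta)$: since $\Delta$ is closed and commutes with $e^{t\Delta}$, one gets
\begin{align*}
(\Delta - z\,\mathrm{id}) S f = -\int_0^\infty e^{-zt}(\Delta - z\,\mathrm{id}) e^{t\Delta} f \, dt = -\int_0^\infty e^{-zt}\Bigl(\frac{d}{dt} e^{t\Delta} f - z\, e^{t\Delta} f\Bigr) dt = -\int_0^\infty \frac{d}{dt}\Bigl(e^{-zt} e^{t\Delta} f\Bigr) dt.
\end{align*}
By the fundamental theorem of calculus this equals $e^{0} f - \lim_{t\to\infty} e^{-zt} e^{t\Delta} f = f$, where the limit at infinity vanishes because the integral $\int_0^\infty e^{-zt} e^{t\Delta} f\,dt$ converges (so the integrand, being the derivative of a function with a limit, must go to zero — more carefully, convergence of the integral together with the identity just derived forces the boundary term to be $0$). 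Then I would do the computation in the other order, $S(\Delta - z\,\mathrm{id}) f$, which gives the same chain of equalities since $e^{t\Delta}$ commutes with $\Delta$ on $\mathrm{dom}(\Delta)$, yielding $S(\Delta - z\,\mathrm{id}) f = f$ as well. Finally, by density of $\mathrm{dom}(\Delta)$ in $L^p(X)$ and boundedness of $S$ (guaranteed by the assumed convergence and the uniform boundedness principle / a direct norm estimate), $S$ extends to the bounded two-sided inverse $R(z) = (\Delta - z\,\mathrm{id})^{-1}$.

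\textbf{Main obstacle.} The only delicate points are the interchange of the closed unbounded operator $\Delta$ with the integral, and the justification that the boundary term $\lim_{t\to\infty} e^{-zt} e^{t\Delta} f$ vanishes; both follow from the hypothesis that the integral converges (as a Bochner or formal integral) together with closedness of $\Delta$, but they should be stated carefully rather than waved through. I expect the write-up to handle this by first establishing everything on the dense domain $\mathrm{dom}(\Delta)$ — or even on the still denser span of radial eigenfunctions $\varphi_{\lambda,\sigma}$, where $e^{t\Delta}$ acts by the explicit scalar $e^{-t(\lambda^2+\rho^2)}$ from Theorem~\ref{hyperkernel} and the computation reduces to the elementary scalar identity $-\int_0^\infty e^{-zt} e^{-t(\lambda^2+\rho^2)}\,dt = \frac{1}{(\lambda^2+\rho^2)+z} = \bigl(-(\lambda^2+\rho^2)-z\bigr)^{-1}$ — and then passing to the closure.
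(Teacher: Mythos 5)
Your proof is correct in substance, but it follows a different (equally standard) route than the paper. The paper does not differentiate $e^{t\Delta}f$ under the integral sign at all: it takes an arbitrary $f\in L^p(X)$, sets $g:=\int_0^\infty e^{-zs}e^{s\Delta}f\,ds$, and computes the difference quotient $\frac{1}{h}\bigl(e^{h\Delta}-\operatorname{id}\bigr)g$ directly, using the semigroup law $e^{h\Delta}e^{s\Delta}=e^{(s+h)\Delta}$ and the substitution $s\mapsto s+h$ to rewrite it as $\frac{e^{zh}-1}{h}\int_h^\infty e^{-zs}e^{s\Delta}f\,ds-\frac{1}{h}\int_0^h e^{-zs}e^{s\Delta}f\,ds$; letting $h\to 0$ gives $\Delta g=zg-f$, hence $(\Delta-z\operatorname{id})(-g)=f$. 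That argument buys two things your version has to work for: it never restricts to $\operatorname{dom}(\Delta)$ (so no density-plus-closure step is needed), and it never interchanges the closed unbounded operator $\Delta$ with the Bochner integral or confronts a boundary term at $t=\infty$ --- the two points you correctly flag as the delicate ones in your approach. Conversely, your write-up establishes both identities $(\Delta-z\operatorname{id})Sf=f$ and $S(\Delta-z\operatorname{id})f=f$, whereas the paper only verifies the first (that $S$ is a right inverse on all of $L^p(X)$), leaving the left-inverse property implicit; and your reduction to the scalar identity $-\int_0^\infty e^{-zt}e^{-t(\lambda^2+\rho^2)}\,dt=-\bigl((\lambda^2+\rho^2)+z\bigr)^{-1}$ on eigenfunctions is essentially what the paper does separately in the subsequent lemma computing the symbol of $R(z)$. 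Do make sure, if you keep your route, to justify the vanishing of $\lim_{t\to\infty}e^{-zt}e^{t\Delta}f$ exactly as you sketch it (the limit exists because the integral of the derivative converges, and a convergent integral with an integrand tending to a limit forces that limit to be zero), since convergence of $\int_0^\infty e^{-zt}e^{t\Delta}f\,dt$ alone does not imply the integrand tends to $0$.
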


\begin{proof}
Let $f\in L^p(X)$ assume the integral converges formally then $g:=\int_0^{\infty} e^{-zs}(e^{s\Delta }f)\,ds$ converges point-wise. 
Now we have: 
\begin{align*}
\frac{1}{h}(e^{h\Delta} -\operatorname{id})g&=\frac{1}{h}(e^{h\Delta}-\operatorname{id})\int_0^{\infty} e^{-zs}(e^{s\Delta}f)\,ds\\
&=\frac{1}{h}\int_0^{\infty}e^{-zs}e^{(s+h)\Delta}f-e^{-zs}e^{s\Delta}f\,ds\\
&=\frac{e^{hs}}{h}\int_h^{\infty}e^{-zs}e^{s\Delta}f\,ds-\frac{1}{h}\int_0^{\infty} e^{-zs}e^{s\Delta}\,ds\\
&=\frac{e^{zh}-1}{h}\int_h^{\infty}e^{-zs}e^{s\Delta}f\,ds-\frac{1}{h}\int_0^{h}e^{-zs}e^{s\Delta}f\,ds
\end{align*}
Taking the limit $h\to 0$ implies:
\begin{align*}
\Delta g&=\lim_{h\to 0} \frac{1}{h}(e^{h\Delta}-\operatorname{id})g\\
&=z\int_0^{\infty}e^{-zs}e^{s\Delta}f\,ds-f
\end{align*}
Hence we have $\Delta g=zg+f$. This yields
\begin{align*}
f&=(\Delta g -z\operatorname{id}g)\\
&=(\Delta-z\operatorname{id})\Bigl(-\int_0^{\infty} e^{-zs}(e^{s\Delta }f)\,ds\Bigr).
\end{align*}
\end{proof}

\begin{lemma}
Let $p>2$ and $\frac{1}{p}+\frac{1}{q}=1$ then for $z\in \C$ with $\operatorname{Re} z >-c_p$ the resolvent $R(z):L^p(X)\to L^p(X)$ is bounded and by duality also $R(z):L^q(X)\to L^q(X)$.
\end{lemma}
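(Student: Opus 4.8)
The plan is to combine the integral representation $R(z) = -\int_0^\infty e^{-zt} e^{t\Delta}\,dt$ from Lemma \ref{lemma:heatintegral} with the operator-norm bound $\lVert e^{t\Delta}\rVert_{L^p\to L^p} \leq e^{-c_p t}$ from Lemma \ref{lemma:spectralbound}. First I would fix $z \in \C$ with $\operatorname{Re} z > -c_p$ and $f \in L^p(X)$, and estimate the $L^p$-norm of the formal integrand: by the triangle inequality for Bochner integrals and Lemma \ref{lemma:spectralbound},
\begin{align*}
\Bigl\lVert \int_0^\infty e^{-zt} e^{t\Delta} f \,dt \Bigr\rVert_p
\leq \int_0^\infty e^{-\operatorname{Re}(z) t} \lVert e^{t\Delta} f\rVert_p \,dt
\leq \lVert f\rVert_p \int_0^\infty e^{-(\operatorname{Re}(z) + c_p) t}\,dt
= \frac{\lVert f\rVert_p}{\operatorname{Re}(z) + c_p}.
\end{align*}
Since $\operatorname{Re}(z) + c_p > 0$ this is finite, so the formal integral converges absolutely in $L^p(X)$; hence by Lemma \ref{lemma:heatintegral} it equals $-R(z)f$, and the displayed bound shows $R(z)$ is a bounded operator on $L^p(X)$ with $\lVert R(z)\rVert_{L^p \to L^p} \leq (\operatorname{Re}(z) + c_p)^{-1}$.

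For the $L^q$ statement I would invoke duality exactly as in the surrounding lemmas: the heat semigroup $e^{t\Delta}$ is self-adjoint on $L^2(X)$ and its extension to $L^p$ has $L^q$-adjoint equal to the extension to $L^q$ (this is implicit in Remark \ref{bem:multifunction} and the self-adjointness of $\Delta$), so $R(z)^* = R(\bar z)$ on $L^q(X)$, and $\operatorname{Re}(\bar z) = \operatorname{Re}(z) > -c_p$ gives the same bound by the computation above applied with $q$ and $c_q = c_p$ in place of $p$ and $c_p$. Alternatively one can simply transpose the bounded operator $R(z)$ on $L^p$ to obtain a bounded operator on $(L^p)^* = L^q$ and check it agrees with $(\Delta - z\,\mathrm{id})^{-1}$ on a dense set of radial functions using Theorem \ref{hyperkernel}.

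The main obstacle is the justification that the formal integral genuinely converges as a Bochner integral in $L^p(X)$ and that Lemma \ref{lemma:heatintegral} therefore applies: one must check measurability of $t \mapsto e^{-zt} e^{t\Delta} f$ as an $L^p$-valued map (which follows from strong continuity of the semigroup for $p < \infty$, and the $p = \infty$ case is excluded from the interesting range here since $c_p = 0$ there) and that the pointwise-convergence hypothesis in Lemma \ref{lemma:heatintegral} is met, which is immediate once absolute convergence in norm is established. Everything else is the elementary exponential-integral estimate above.
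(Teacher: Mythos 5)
Your proposal is correct and follows essentially the same route as the paper: both arguments reduce to the bound $\lVert e^{t\Delta}\rVert_{L^p\to L^p}\leq e^{-c_pt}$ from Lemma \ref{lemma:spectralbound} together with the convergence of $\int_0^\infty e^{-(\operatorname{Re}(z)+c_p)t}\,dt$ for $\operatorname{Re}(z)>-c_p$. The only cosmetic difference is that the paper tests $\langle R(z)f,g\rangle$ against $g\in C_c^\infty(X)\subset L^q(X)$ via H\"older's inequality rather than estimating the $L^p$-norm of the Bochner integral directly as you do.
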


\begin{proof}
Assume $\operatorname{Re} z >-c_p$. 
Let $f,g\in C^{\infty}_c$ and $\langle\cdot,\cdot\rangle:L^p(X)\times L^q(X)\to\C$ be the canonical dual pairing then by H\"older's inequality and Lemma \ref{lemma:spectralbound}:
\begin{align*}
\lvert \langle R(z)f,g\rangle&=\Bigl \lvert \int_X \int_0^{\infty} e^{-zt}\cdot (e^{t\Delta}f)(x)g(x)\,dt\,dx\Bigr \rvert\\
&\leq \int_0^{\infty}e^{\operatorname{Re}zt}e^{-c_pt}\lVert f\rVert_p\cdot \lVert g\rVert_q\,dt\\
&<\infty.
\end{align*}
Therefore $\lVert R(z)\rVert_{L^p\to L^p}<\infty$. 
\end{proof}

\begin{lemma}
Let $p\neq 2,a\in \R$ and $R(z):L^p(X)\to L^p(X)$ is bounded then $a>-c_p$. 
\end{lemma}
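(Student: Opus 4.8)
The plan is to prove the contrapositive: if $a\le -c_p$ then $\Delta-a\operatorname{id}$ is not boundedly invertible on $L^p(X)$, i.e.\ $a$ belongs to the $L^p$-spectrum of $\Delta$. Since $a$ is real and the $L^p$- and $L^q$-realisations of $\Delta$ are mutually adjoint, $R(a)$ is bounded on $L^p(X)$ if and only if it is bounded on $L^q(X)$ (as already used in the two preceding lemmas), so I may assume $p>2$, with $q$ its H\"older conjugate; in particular $\gamma_p=1-\tfrac2p>0$, and $c_p=\rho^2(1-\gamma_p^2)$, which is the relation behind Lemma \ref{lemma:spectralbound} (equivalently $c_p=\tfrac{4\rho^2}{pq}$ as in Theorem \ref{thm:heatchaos}).

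The first step is to exhibit, for every real $a<-c_p$, a non-zero $L^p$-eigenfunction of $\Delta$ with eigenvalue $a$. Choose $\lambda_a\in\C$ with $\lambda_a^2=-(a+\rho^2)$: if $a\le-\rho^2$ put $\lambda_a=\sqrt{-(a+\rho^2)}\in[0,\infty)$, and if $-\rho^2<a<-c_p$ put $\lambda_a=i\sqrt{a+\rho^2}$. A one-line computation shows $\lvert\operatorname{Im}\lambda_a\rvert<\gamma_p\rho$ in both cases: this is immediate in the first, and in the second it reads $a+\rho^2<\gamma_p^2\rho^2$, which is exactly the hypothesis $a<-c_p$. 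Hence $\lambda_a\in S_p=\{\lvert\operatorname{Im}\lambda\rvert<\gamma_p\rho\}$, so by Lemma \ref{lemma2.4.9} we have $\varphi_{\lambda_a,\sigma}\in L^p(X)$, and it is non-zero because $\varphi_{\lambda_a,\sigma}(\sigma)=1$. By the eigenfunction identity recalled in the preliminaries, $\Delta\varphi_{\lambda_a,\sigma}=-(\lambda_a^2+\rho^2)\varphi_{\lambda_a,\sigma}=a\,\varphi_{\lambda_a,\sigma}$. Thus $\Delta-a\operatorname{id}$ has non-trivial kernel on $L^p(X)$, so $a$ lies in the $L^p$-spectrum of $\Delta$ and $R(a)$ is not a bounded operator. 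This proves the claim for $a<-c_p$ and shows $(-\infty,-c_p)\subseteq\sigma_{L^p}(\Delta)$.

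The second step handles the endpoint $a=-c_p$. Since $\Delta$ generates a strongly continuous semigroup on $L^p(X)$ (Section \ref{sec:chosheat}) it is a closed operator, so $\sigma_{L^p}(\Delta)$ is closed in $\C$; together with $(-\infty,-c_p)\subseteq\sigma_{L^p}(\Delta)$ this forces $-c_p\in\sigma_{L^p}(\Delta)$ as well. Hence for every real $a\le-c_p$ the resolvent $R(a)$ fails to be bounded, which is precisely the assertion; alternatively one could invoke the description of the $L^p$-spectrum of $\Delta$ from \cite{10.1215/S0012-7094-89-05836-5} at this point.

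I expect the endpoint $a=-c_p$ to be the only genuine obstacle. There the eigenfunction construction degenerates: the natural candidate $\varphi_{i\gamma_p\rho,\sigma}$ sits exactly at the critical decay rate and just fails to belong to $L^p(X)$, so no honest $L^p$-eigenvector with eigenvalue $-c_p$ is available, and one must instead argue through closedness of the spectrum (or build an approximate eigenvector along $\lambda\to i\gamma_p\rho$). Everything else — the duality reduction and the elementary check that $\lambda_a$ lies in the open strip $S_p$ — is routine.
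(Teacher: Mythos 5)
Your proof is correct in substance but follows a genuinely different route from the paper. The paper argues directly in the forward direction: it pairs $R(a)\varphi_{\lambda,x}$ (for $\lambda\in S_p$, so $\varphi_{\lambda,x}\in L^p(X)$ by Lemma \ref{lemma2.4.9}) against $g\in L^q(X)$, uses $e^{t\Delta}\varphi_{\lambda,x}=e^{-t(\lambda^2+\rho^2)}\varphi_{\lambda,x}$ to reduce the pairing to the scalar Laplace integral $\int_0^{\infty}e^{-(a+\lambda^2+\rho^2)t}\,dt$, and observes that finiteness for all $\lambda\in S_p$ forces $a\geq -c_p$ (via Lemma \ref{lemma:spectralbound}), with strictness from openness of the resolvent set. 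You instead prove the contrapositive by exhibiting, for each $a<-c_p$, an explicit $L^p$-eigenfunction $\varphi_{\lambda_a,\sigma}$ with eigenvalue $a$, so that $(-\infty,-c_p)$ lies in the point spectrum, and then close up to the endpoint by closedness of the spectrum. The two arguments rest on the same input (Lemma \ref{lemma2.4.9} and the value $c_p=\rho^2(1-\gamma_p^2)$), and the endpoint devices (openness of the resolvent set versus closedness of the spectrum) are the same move in opposite directions. Your version gives slightly more --- it identifies $(-\infty,-c_p)$ as eigenvalues, not merely as outside the resolvent set --- while the paper's version avoids any discussion of the domain of the generator.

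That domain issue is the one point you should tighten: to conclude that $a$ lies in the spectrum of the $L^p$-realisation of $\Delta$ from the pointwise identity $\Delta\varphi_{\lambda_a,\sigma}=a\,\varphi_{\lambda_a,\sigma}$, you need $\varphi_{\lambda_a,\sigma}$ to belong to the domain of the generator of the heat semigroup on $L^p(X)$. This is not automatic from the distributional eigenvalue equation, but it follows from material in the paper: by Theorem \ref{hyperkernel} and Proposition \ref{prop:varphieigenfunction} one has $e^{t\Delta}\varphi_{\lambda_a,\sigma}=e^{-t(\lambda_a^2+\rho^2)}\varphi_{\lambda_a,\sigma}$, and differentiating this scalar multiple at $t=0$ in the $L^p$-norm shows that $\varphi_{\lambda_a,\sigma}$ is in the domain of the generator with the claimed eigenvalue. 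With that one sentence added, your argument is complete.
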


\begin{proof}
Assume $p>2$ and $\frac{1}{p}+\frac{1}{q}=1$ and $a\in \R$ then by Lemma \ref{lemma2.4.9} we have for $g\in L^q(X)$
\begin{align*}
\infty>\langle g,R(a)\varphi_{\lambda,x}\rangle&=\int_X g(y)\int_0^{\infty} e^{-at}(e^{t\Delta}\varphi_{\lambda,x})(y)\,dt \,dy\\
&=\int_X g(y)\varphi_{\lambda,x}(y)\,dy\cdot \int_0^{\infty} e^{-at}e^{-t(\lambda^2+\rho^2)}\,dt.
\end{align*}
Now by Lemma \ref{lemma:spectralbound} we have that $a\geq -c_p$ and since the resolvent set is open the inequality is strict. 
\end{proof}

\begin{folg}\label{folg;resolv1}
Let $a\in \R$, $p>2$ and $\frac{1}{p}+\frac{1}{q}=1$ then $R(z):L^p(X)\to L^p(X)$ is bounded and by duality also $R(z):L^q(X)\to L^q(X)$ if and only if $a>-c_p$. 
\end{folg}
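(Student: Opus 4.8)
The statement is the conjunction of the two lemmas immediately preceding it, together with a short duality remark, so the plan is to assemble it from them. Throughout write $R(a)=(\Delta-a\,\operatorname{id})^{-1}$ for $a\in\R$; the ``$R(z)$'' in the statement is $R(a)$ with $z=a$ real.

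For the implication ``$a>-c_p\Rightarrow R(a)$ bounded'' I would invoke the preceding lemma on boundedness of $R(z)$ for $\operatorname{Re}z>-c_p$: since $a\in\R$ we have $\operatorname{Re}a=a>-c_p$, and combining the integral representation $R(a)=-\int_0^\infty e^{-at}e^{t\Delta}\,dt$ (Lemma \ref{lemma:heatintegral}) with the operator-norm bound $\lVert e^{t\Delta}\rVert_{L^p\to L^p}\le\hat{h}_t(-i\gamma_p\rho)=e^{-c_pt}$ (Lemma \ref{lemma:spectralbound}) gives $\lVert R(a)\rVert_{L^p\to L^p}\le\int_0^\infty e^{-at}e^{-c_pt}\,dt<\infty$, the integral being finite precisely because $a>-c_p$. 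For the converse ``$R(a)$ bounded $\Rightarrow a>-c_p$'' I would quote the preceding lemma directly: pairing $R(a)$ with $g\in L^q(X)$ and with the eigenfunctions $\varphi_{\lambda,x}\in L^p(X)$ (Lemma \ref{lemma2.4.9}) and using $e^{t\Delta}\varphi_{\lambda,x}=e^{-t(\lambda^2+\rho^2)}\varphi_{\lambda,x}$ from Theorem \ref{hyperkernel} forces $\int_0^\infty e^{-at}e^{-t(\lambda^2+\rho^2)}\,dt<\infty$ for every $\lambda\in S_p$, hence $a\ge-(\lambda^2+\rho^2)$ for all such $\lambda$, i.e.\ $a\ge-c_p$; openness of the resolvent set then upgrades this to $a>-c_p$.

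For the duality clause I would note that the $p$- and $q$-Laplacians agree on $C_c^\infty(X)$ and that $\Delta$ is symmetric, so $(\Delta-a\,\operatorname{id})$ on $L^p(X)$ has as Banach-space adjoint $(\Delta-a\,\operatorname{id})$ on $L^q(X)$ for real $a$; hence $R(a)^*=R(a)$ as an operator $L^q(X)\to L^q(X)$, and the two are bounded simultaneously with equal operator norm. Since $c_p=\tfrac{4\rho^2}{pq}=c_q$ is symmetric in $p$ and $q$, the characterisation transfers verbatim to $L^q(X)$. The only step needing any care is this adjoint identification, which makes ``bounded on $L^p$'' and ``bounded on $L^q$'' interchangeable; but it is already implicit in the phrasing of the two preceding lemmas (each carries its ``by duality'' $L^q$-counterpart), so in practice the corollary requires nothing beyond conjoining them.
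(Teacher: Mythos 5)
Your proposal is correct and matches the paper's intent: the corollary carries no separate proof in the paper, being exactly the conjunction of the two preceding lemmas (boundedness for $\operatorname{Re}z>-c_p$, and the converse for real $a$ via pairing with the eigenfunctions $\varphi_{\lambda,x}$ and openness of the resolvent set), which is precisely how you assemble it. Your additional remarks on the adjoint identification and the symmetry $c_p=c_q$ are consistent with the paper's ``by duality'' phrasing and add nothing that conflicts with it.
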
 

\begin{lemma}
Let $z\in \C$. Then the Fourier transform of the resolvent $R(z)$, is given by 
\begin{align*}
\widetilde{R(z)f}^{\sigma}=-\frac{1}{(\lambda^2+\rho^2)+z}\tilde{f}^{\sigma} \quad\forall f\in C^{\infty}_c(X).
\end{align*}
\end{lemma}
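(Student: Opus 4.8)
The plan is to read the symbol of $R(z)$ off the heat--integral representation $R(z)=-\int_0^{\infty}e^{-zt}e^{t\Delta}\,dt$ from Lemma~\ref{lemma:heatintegral}, combined with the Fourier symbol $\hat h_t^{\sigma}(\lambda)=e^{-t(\lambda^2+\rho^2)}$ of the heat semi-group from Theorem~\ref{hyperkernel}.

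First I would record the symbol of $e^{t\Delta}$ on all of $C^\infty_c(X)$, not only on radial functions. Since $e^{t\Delta}f=f*h_t(\sigma,\cdot)$ with $h_t(\sigma,\cdot)\in L^1(X)$ radial and $e^{t\Delta}f\in L^1(X)\cap L^2(X)$, Remark~\ref{Bem2.4.5} (the $L^1$-version of Lemma~\ref{f+g}) together with Theorem~\ref{hyperkernel} yields
\begin{align*}
\widetilde{e^{t\Delta}f}^{\sigma}(\lambda,\xi)=e^{-t(\lambda^2+\rho^2)}\,\widetilde f^{\sigma}(\lambda,\xi)
\end{align*}
for every $\lambda\in\R$ and almost every $\xi\in\partial X$. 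Next, fix $f\in C^\infty_c(X)$ and assume first that $\operatorname{Re}z>-\rho^2$; since the bottom of the $L^2$-spectrum of $-\Delta$ is $\rho^2$, the spectral theorem gives $\lVert e^{t\Delta}f\rVert_2\le e^{-\rho^2 t}\lVert f\rVert_2$, so $t\mapsto e^{-zt}e^{t\Delta}f$ is Bochner integrable in $L^2(X)$ and Lemma~\ref{lemma:heatintegral} reads $R(z)f=-\int_0^{\infty}e^{-zt}e^{t\Delta}f\,dt$ (for $p>2$ this range is contained in the boundedness range $\operatorname{Re}z>-c_p$ of Corollary~\ref{folg;resolv1}, and $p<2$ follows by duality; in any case $R(z)f$ computed on $L^p(X)$ agrees with $R(z)f$ computed on $L^2(X)$ for $f\in C^\infty_c(X)$). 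Because $\widetilde{\,\cdot\,}^{\sigma}$ is a bounded linear map from $L^2(X)$ into $L^2((0,\infty)\times\partial X,\,C_0|\mathbf c(\lambda)|^{-2}\,d\mu_\sigma\,d\lambda)$, it commutes with this Bochner integral, and substituting the previous display gives
\begin{align*}
\widetilde{R(z)f}^{\sigma}(\lambda,\xi)=-\int_0^{\infty}e^{-zt}\,\widetilde{e^{t\Delta}f}^{\sigma}(\lambda,\xi)\,dt=-\widetilde f^{\sigma}(\lambda,\xi)\int_0^{\infty}e^{-(z+\lambda^2+\rho^2)t}\,dt=-\frac{\widetilde f^{\sigma}(\lambda,\xi)}{(\lambda^2+\rho^2)+z},
\end{align*}
the scalar integral converging since $\operatorname{Re}(z+\lambda^2+\rho^2)>0$ for $\lambda\in\R$.

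Finally, for arbitrary $z$ in the resolvent set of $\Delta$ on $L^p(X)$ I would extend by analytic continuation: the right-hand side is holomorphic in $z$ for each fixed $(\lambda,\xi)$ (its singularities lying on $(-\infty,-\rho^2]$, hence inside the spectrum), and the left-hand side is holomorphic because $z\mapsto R(z)f$ is a holomorphic $L^p(X)$-valued map while $\widetilde{\,\cdot\,}^{\sigma}$ depends continuously on its $L^p$-argument (Lemma~\ref{lemma:holo} for $1\le p<2$, the Plancherel theorem for $p=2$), so the identity propagates off the half-plane $\operatorname{Re}z>-\rho^2$. The one step that genuinely needs care is the commutation of the Fourier transform with the $t$-integral; casting everything in $L^2(X)$ as above reduces this to the continuity of a linear map under Bochner integration, and the resulting identity also holds pointwise a.e.\ in $(\lambda,\xi)$ because for $\operatorname{Re}z>-\rho^2$ the integrand $e^{-zt}e^{-t(\lambda^2+\rho^2)}\widetilde f^{\sigma}(\lambda,\xi)$ is absolutely integrable in $t$. (Alternatively one can avoid $L^2$ and justify the interchange directly by Fubini, using that $e^{-\rho B_{\xi,\sigma}}$ is a positive $(-\rho^2)$-eigenfunction of $\Delta$ and that $h_t$ is symmetric, so that $\int_X (e^{t\Delta}|f|)(x)e^{-\rho B_{\xi,\sigma}(x)}\,dx=e^{-\rho^2 t}\int_X|f|e^{-\rho B_{\xi,\sigma}}\,dx<\infty$.)
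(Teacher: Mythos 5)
Your proposal follows essentially the same route as the paper: both read the symbol of $R(z)$ off the representation $R(z)=-\int_0^{\infty}e^{-zt}e^{t\Delta}\,dt$, apply the convolution theorem together with $\hat h_t^{\sigma}(\lambda)=e^{-t(\lambda^2+\rho^2)}$, and evaluate the resulting scalar Laplace integral. The paper's proof is just the three-line computation; your additional justifications (Bochner integrability for the interchange of $\widetilde{\,\cdot\,}^{\sigma}$ with the $t$-integral, and analytic continuation beyond $\operatorname{Re}(z+\lambda^2+\rho^2)>0$, where the paper's final integral would otherwise diverge) are correct and fill in steps the paper leaves implicit.
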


\begin{proof}
Let $f\in C^{\infty}_c(X)$ then since the heat semi-group acts by convolution with the heat kernel on $C^{\infty}_c(X)$ and because of Lemma \ref{f+g} we have:
\begin{align*}
-\widetilde{R(z)f}^{\sigma}(\lambda,\xi)&=\int_0^{\infty}e^{-zt}\widetilde{e^{t\Delta}f}^{\sigma}(\lambda,\xi)\,dt\\
&=\int_0^{\infty}e^{-zt}e^{-(\lambda^2+\rho^2)t}\tilde{f}^{\sigma}(\lambda,\xi)\,dt\\
&=\frac{1}{(\lambda^2+\rho^2)+z}\tilde{f}^{\sigma}(\lambda,\xi).
\end{align*}
\end{proof}

\begin{prop}
Let $X$ be a non-compact simply connected harmonic manifold of rank one, $\frac{1}{p}+\frac{1}{q}=1$ and $p\leq2$. If the resolvent $R(z):L^p(X)\to L^p(X)$ is bounded for $z=\sigma+i\tau$ then
\begin{align*}
\tau>-4a^2\Bigl(\sigma+\frac{4\rho^2}{pq}\Bigr)
\end{align*}
where $a=\rho\Bigl(\frac{2}{p}-1\Bigr)$.
\end{prop}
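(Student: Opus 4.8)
The plan is to transfer the question to the conjugate exponent $q>2$, where the operator may be tested against the radial eigenfunctions $\varphi_{\mu,\sigma}$, and then to read off the constraint on $z$ from the explicit rational symbol $m_{R(z)}(\mu)=-\bigl((\mu^{2}+\rho^{2})+z\bigr)^{-1}$ produced by the lemma computing $\widetilde{R(z)f}^{\sigma}$.

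First I would dualize. Since (under the hypothesis) $R(z)$ is a bounded $L^{p}$-multiplier and $p\neq 2$, Remark~\ref{bem:multifunction} and Lemma~\ref{lemma:dual} show that $R(z)^{*}=R(\bar z)$ is a bounded $L^{q}$-multiplier with symbol $\overline{m_{R(z)}(\bar\mu)}=-\bigl((\mu^{2}+\rho^{2})+\bar z\bigr)^{-1}$ and $\lVert R(\bar z)\rVert_{L^{q}\to L^{q}}=\lVert R(z)\rVert_{L^{p}\to L^{p}}=:M$. Here $q>2$ and, with $\gamma_{q}=1-\tfrac2q$, one has $\gamma_{q}\rho=\rho(\tfrac2p-1)=a$, so $S_{q}=\{\mu\in\C\mid\lvert\operatorname{Im}\mu\rvert<a\}$. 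By Lemma~\ref{lemma2.4.9} every $\varphi_{\mu,\sigma}$ with $\mu\in S_{q}$ is a nonzero element of $L^{q}(X)$, and Proposition~\ref{prop:varphieigenfunction}, applied to the $L^{q}$-multiplier $R(\bar z)$, gives
\[
R(\bar z)\varphi_{\mu,\sigma}=-\bigl((\mu^{2}+\rho^{2})+\bar z\bigr)^{-1}\varphi_{\mu,\sigma},\qquad\mu\in S_{q}.
\]
Boundedness of $R(\bar z)$ therefore forces $\bigl|(\mu^{2}+\rho^{2})+\bar z\bigr|\ge M^{-1}$ for all $\mu\in S_{q}$, so the entire function $\mu\mapsto(\mu^{2}+\rho^{2})+\bar z$ has no zero in $S_{q}$; and since $S_{q}$ is an open strip, a zero at a point of $\partial S_{q}$ would produce a sequence in $S_{q}$ along which $\lvert m_{R(\bar z)}\rvert$ blows up, so in fact $(\mu^{2}+\rho^{2})+\bar z\neq 0$ on the closed strip $\overline{S_{q}}=\{\lvert\operatorname{Im}\mu\rvert\le a\}$.

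It then remains to make the elementary translation of this into an inequality for $z=\sigma+i\tau$. Writing $\mu=u+iv$, the equation $(\mu^{2}+\rho^{2})+\bar z=0$ is equivalent to $u^{2}-v^{2}=-\rho^{2}-\sigma$ and $2uv=\tau$; eliminating $u$ yields $4v^{4}-4v^{2}(\rho^{2}+\sigma)-\tau^{2}=0$, hence $v^{2}=\tfrac12\bigl((\rho^{2}+\sigma)+\sqrt{(\rho^{2}+\sigma)^{2}+\tau^{2}}\,\bigr)$ (with $u$ then real as well), and a short computation shows that this $v$ satisfies $v^{2}\le a^{2}$ exactly when $\tau^{2}\le 4a^{2}(a^{2}-\rho^{2}-\sigma)=-4a^{2}\bigl(\sigma+(\rho^{2}-a^{2})\bigr)$. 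By the identity $\rho^{2}-a^{2}=\tfrac{4\rho^{2}}{pq}$ already used in the proof of Lemma~\ref{lemma:spectralbound} (equivalently $\widehat h_{t}(-i\gamma_{p}\rho)=e^{-c_{p}t}$) this reads $\tau^{2}\le -4a^{2}\bigl(\sigma+\tfrac{4\rho^{2}}{pq}\bigr)$. So if this inequality held, $(\mu^{2}+\rho^{2})+\bar z$ would have a zero in $\overline{S_{q}}$, contradicting the previous step; therefore $\tau^{2}>-4a^{2}\bigl(\sigma+\tfrac{4\rho^{2}}{pq}\bigr)$, which is the assertion.

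The one genuinely delicate point I expect is the passage from ``$\lvert m_{R(\bar z)}\rvert$ bounded on the \emph{open} strip $S_{q}$'' to ``no pole of $m_{R(\bar z)}$ in the \emph{closed} strip $\overline{S_{q}}$'': this is where it matters that the symbol is not merely holomorphic and bounded on $S_{q}$ (as a general $L^{q}$-multiplier symbol would be) but is a fixed rational function of $\mu$, so that a pole on $\partial S_{q}$ is already felt from inside $S_{q}$. The duality step, the eigenfunction identity from Proposition~\ref{prop:varphieigenfunction}, and the bookkeeping with $\mu=u+iv$ are otherwise routine. The degenerate case $p=2$, where $a=0$ and $S_{q}$ collapses to $\R$, is not covered by this argument and should be handled directly via the Plancherel theorem, using $\lVert R(z)\rVert_{L^{2}}=\lVert m_{R(z)}\rVert_{\infty}$.
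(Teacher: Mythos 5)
Your argument is correct and reaches the paper's conclusion, but the key step is justified differently. The paper works directly on the $L^p$ side: since $R(z)$ is bounded, $R(z)f\in L^p(X)$ for $f\in C_c^\infty(X)$, so by Lemma \ref{lemma:holo2} the function $\lambda\mapsto\widetilde{R(z)f}^{\sigma}(\lambda,\xi)=-(\lambda^2+\rho^2+z)^{-1}\widetilde{f}^{\sigma}(\lambda,\xi)$ must be holomorphic on $S_q$, which expels the poles of the rational symbol from the strip; you instead dualize to $L^q$ with $q>2$ via Lemma \ref{lemma:dual} and Remark \ref{bem:multifunction} and test $R(\bar z)$ against the eigenfunctions $\varphi_{\mu,\sigma}\in L^q(X)$ (Lemma \ref{lemma2.4.9}, Proposition \ref{prop:varphieigenfunction}), obtaining the quantitative lower bound $\lvert(\mu^2+\rho^2)+\bar z\rvert\geq\lVert R(z)\rVert^{-1}$ on $S_q$. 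The endgame --- locating the zeros via $u^2-v^2=-\sigma-\rho^2$, $2uv=\pm\tau$, solving $v^2=\tfrac12\bigl(k+\sqrt{k^2+\tau^2}\bigr)$ and using $\rho^2-a^2=\tfrac{4\rho^2}{pq}$ --- is the same computation as in the paper. Your route buys two small things: the uniform lower bound lets you exclude zeros from the \emph{closed} strip directly, so you get the strict inequality without the paper's appeal to openness of the resolvent set, and you correctly flag that the argument degenerates at $p=2$ (where $a=0$); note that the paper's own proof also silently excludes this case, since Lemma \ref{lemma:holo2} requires $p<2$ and the $v=0$ case is deferred to Corollary \ref{folg;resolv1}. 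Finally, like the paper's proof, what you actually establish is the inequality for $\tau^2$ rather than for $\tau$ as printed in the statement; that discrepancy is in the statement, not in your argument.
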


\begin{proof}
Assume the resolvent is bounded then by Lemma \ref{lemma:holo2} for every $f\in L^p(X)$, $x\in X$ every $\lambda \in S_q$ and almost every $\xi\in \partial X$
\begin{align*}
\widetilde{R(z)f}^{x}(\lambda,\xi)=\frac{-1}{\lambda^2+\rho^2+z}f(\lambda,\xi)
\end{align*}
is holomorphic on $S_q$ and bounded on every closed sup strip. 
Hence poles have to lie outside every sub-strip. Let us now calculate these poles. 
Set $\lambda=u+iv$ then $\lambda^2+\rho^2+z$ yields:
\begin{align*}
u^2-v^2&=-\sigma-\rho^2\\
2uv&=\tau.
\end{align*}
We may assume $v\neq 0$ since otherwise $\tau=0$ and we would be in the case of Corollary \ref{folg;resolv1}.
Substitute $k=\sigma+\rho$ then with $u=\frac{\tau}{2v}$ and since $v^2>0$ we have:
\begin{align*}
v^2=\frac{k+\sqrt{k^2+\tau^2}}{2}.
\end{align*}
Because of the considerations above we have $\lvert v\rvert\geq a$ hence:
\begin{align*}
\sqrt{k^2+\tau^2}\geq 2a^2-k.
\end{align*}
Now the right-hand side is non-negative if and only if $\sigma\leq \rho^2(1-\frac{8}{pq})$. Therefore in this case:
\begin{align*}
\tau^2&\geq (2a^2-k)^2-k\\
&=-4a^2\Bigl(\sigma+\frac{4\rho^2}{pq}\Bigr).
\end{align*}
And the case $\sigma >\rho^2(1-\frac{8}{pq})$ implies the same. 
Now the resolvent set is open hence $"\geq"$ becomes $">"$. 
\end{proof}
\begin{proof}[Proof Theorem \ref{thm:reschaos}]
  In contrast to the Laplace operator the resolvent is, at least off the spectrum, a bounded linear operator and

has the symbol $-((\lambda^2+\rho^2)+z)^{-1}$. Furthermore by Lemma \ref{lemma:heatintegral} we observe that the resolvent of the Laplacian is an $L^p$ multiplier since the heat kernel is. 
 Now for $z$ close to the spectrum of $\Delta$ on $L^p(X)$ for $2<p<\infty$, discussed above,  we can find $\lambda_1,\lambda_2 \in S_p$ such that 
 \begin{align*}
 \lvert ((\lambda_1^2+\rho^2)+z)^{-1}\rvert \leq 1 \leq \lvert ((\lambda_2^2+\rho^2)+z)^{-1}\rvert.
 \end{align*}
Hence by the intermediate value theorem and Theorem \ref{chaotic1} $R(z):L^p(X)\to L^p(X)$ is chaotic for $2< p<\infty$.
\end{proof}

\subsection{Discussion}

In addition to some immediate corollaries regarding the chaotic resp. non-chaotic behaviour of $L^p$-multipliers and especially the heat semi-group on the space of radial functions we want to discuss the cases $p=\infty$ and $m_T$ not being a continuous function. 

First, let us consider the case $p=\infty$. It should be noted that  $L^{\infty}(X)$  is not a separable Banach space, but assume that the definition of chaos extends to $L^{\infty}(X)$ in a meaningful way. The condition $p<\infty$ first appears in Lemma \ref{lemma:dens}, the reason for this is that the proof uses a duality argument and the dual space of $L^{\infty}(X)$ is larger than $L^1(X)$. To see that our result is optimal 
 consider the bounded linear operator $T:L^{\infty}(X)\to L^{\infty}(X)$ given by the convolution with a radial $C^{\infty}_c$ function $f$ on $X$. Then $g*f$ is continuous hence $T$ maps $L^{\infty}(X)$ to the subspace of continuous functions. Therefore it can not be chaotic.\\
 The assumption on the symbol of the $L^p$-multiplier to be a holomorphic function in the case $p\leq 2$ is natural since the same holds true for $p>2$. 
Now consider $p=2$ and $T:L^2(X)\to L^2(X)$ such that
 \begin{align*}
 m_T(\lambda)=\begin{cases}
 -1&\text{if }\lambda\in (-1,1),\\
 1&\text{else.}
 \end{cases}
 \end{align*}
 One can easily see using the Plancherel theorem and the inverse Fourier transform that such an operator exists. Then
  again by the Plancherel theorem for $n\in\N$ we have $T^{2n}=\operatorname{id}$. Hence every function is a periodic point. \\


\newpage
\footnotesize

\bibliography{literature}

\begin{thebibliography}{dEGE03}

\bibitem[ADY96]{Anker96}
Jean-Philippe Anker, Ewa Damek, and Chokri Yacoub.
\newblock Spherical analysis on harmonic $a n$ groups.
\newblock {\em Annali della Scuola Normale Superiore di Pisa - Classe di
  Scienze}, Ser. 4, 23(4):643--679, 1996.

\bibitem[BBE85]{Ballmann1985}
Werner Ballmann, Misha Brin, and Patrick Eberlein.
\newblock Structure of manifolds of nonpositive curvature. i.
\newblock {\em Annals of Mathematics}, 122(1):171--203, 1985.

\bibitem[Bes78]{Besse1978}
Arthur~L. Besse.
\newblock {\em Harmonic Manifolds}, pages 154--178.
\newblock Springer Berlin Heidelberg, Berlin, Heidelberg, 1978.

\bibitem[BH11]{bloom2011harmonic}
W.R. Bloom and H.~Heyer.
\newblock {\em Harmonic Analysis of Probability Measures on Hypergroups}.
\newblock De Gruyter Studies in Mathematics. De Gruyter, 2011.

\bibitem[Bis18]{biswas2018fourier}
Kingshook Biswas.
\newblock The fourier transform on negatively curved harmonic manifolds, 2018.
\newblock arXiv: 1802.07236.

\bibitem[BKP21]{Biswas2019}
Kingshook Biswas, Gerhard Knieper, and Norbert Peyerimhoff.
\newblock The fourier transform on harmonic manifolds of purely exponential
  volume growth.
\newblock {\em The Journal of Geometric Analysis}, 31(1):126--163, Jan 2021.

\bibitem[BS88]{bennett1988interpolation}
C.~Bennett and R.C. Sharpley.
\newblock {\em Interpolation of Operators}.
\newblock ISSN. Elsevier Science, 1988.

\bibitem[BS18]{biswas2018dynamics}
Kingshook Biswas and Rudra~P. Sarkar.
\newblock Dynamics of $l^p$ multipliers on harmonic manifolds, 2018.
\newblock arXiv:1805.10779.

\bibitem[BS22]{MR4443685}
Kingshook Biswas and Rudra~P. Sarkar.
\newblock Dynamics of {$L^p$} multipliers on harmonic manifolds.
\newblock {\em Electron. Res. Arch.}, 30(8):3042--3057, 2022.

\bibitem[BX95]{Bloom1995TheHM}
W.R. Bloom and Z.F. Xu.
\newblock The hardy-littlewood maximal function for ch\`ebli-trim\`eche
  hypergroups.
\newblock In W.C. Connett, M-O Gebuhrer, and A.L. Schwartz, editors, {\em
  Applications of Hypergroups and Related Measure Algebras}, volume 183, pages
  45--70. American Mathematical Society, 1995.

\bibitem[BX00]{fouriermulti}
Walter~R. Bloom and Zengfu Xu.
\newblock Fourier multipliers for {$L^p$} on {C}h\'{e}bli-{T}rim\`eche
  hypergroups.
\newblock {\em Proc. London Math. Soc. (3)}, 80(3):643--664, 2000.

\bibitem[CS74]{Stein}
J.~L. Clerc and E.~M. Stein.
\newblock {$L^{p}$}-multipliers for noncompact symmetric spaces.
\newblock {\em Proc. Nat. Acad. Sci. U.S.A.}, 71:3911--3912, 1974.

\bibitem[Dav90]{davies1990heat}
E.B. Davies.
\newblock {\em Heat Kernels and Spectral Theory}.
\newblock Cambridge Tracts in Mathematics. Cambridge University Press, 1990.

\bibitem[dEGE03]{delaubenfels2003chaos}
R.~deLaubenfels, H.~Emamirad, and K.-G. Grosse-Erdmann.
\newblock Chaos for semigroups of unbounded operators.
\newblock {\em Math. Nachr.}, 261/262:47--59, 2003.

\bibitem[DR92a]{Damek_1992}
Ewa Damek and Fulvio Ricci.
\newblock A class of nonsymmetric harmonic riemannian spaces.
\newblock {\em Bulletin of the American Mathematical Society}, 27(1):139--143,
  Jul 1992.

\bibitem[DR92b]{Damek1992}
Ewa Damek and Fulvio Ricci.
\newblock Harmonic analysis on solvable extensions of h-type groups.
\newblock {\em The Journal of Geometric Analysis}, 2(3):213--248, May 1992.

\bibitem[DSW97]{desch_schappacher_webb_1997}
Wolfgang Desch, Wilhelm Schappacher, and Glenn~F. Webb.
\newblock Hypercyclic and chaotic semigroups of linear operators.
\newblock {\em Ergodic Theory Dynam. Systems}, 17(4):793--819, 1997.

\bibitem[GEM11]{grosse2011linear}
K.G. Grosse-Erdmann and A.P. Manguillot.
\newblock {\em Linear Chaos}.
\newblock Universitext. Springer London, 2011.

\bibitem[Hel94]{helgason1994geometric}
S.~Helgason.
\newblock {\em Geometric Analysis on Symmetric Spaces}.
\newblock Mathematical surveys and monographs. American Mathematical Society,
  1994.

\bibitem[Her97]{herzog1997universality}
Gerd Herzog.
\newblock On a universality of the heat equation.
\newblock {\em Math. Nachr.}, 188:169--171, 1997.

\bibitem[JW10a]{ji_weber_2010}
Lizhen Ji and Andreas Weber.
\newblock Dynamics of the heat semigroup on symmetric spaces.
\newblock {\em Ergodic Theory and Dynamical Systems}, 30(2):457--468, 2010.

\bibitem[JW10b]{JI20101121}
Lizhen Ji and Andreas Weber.
\newblock {$L^p$} spectral theory and heat dynamics of locally symmetric
  spaces.
\newblock {\em J. Funct. Anal.}, 258(4):1121--1139, 2010.

\bibitem[JW15]{ji_weber_2015}
Lizhen Ji and Andreas Weber.
\newblock The {$L^p$} spectrum and heat dynamics of locally symmetric spaces of
  higher rank.
\newblock {\em Ergodic Theory Dynam. Systems}, 35(5):1524--1545, 2015.

\bibitem[Kni12]{knieper2009new}
Gerhard Knieper.
\newblock New results on noncompact harmonic manifolds.
\newblock {\em Comment. Math. Helv.}, 87(3):669--703, 2012.

\bibitem[Kni16]{knieper22016}
Gerhard Knieper.
\newblock {\em A survey on noncompact harmonic and asymptotically harmonic
  manifolds}.
\newblock London Mathematical Society Lecture Note Series. Cambridge University
  Press, 2016.

\bibitem[KP16]{Knieper2016}
Gerhard Knieper and Norbert Peyerimhoff.
\newblock Harmonic functions on rank one asymptotically harmonic manifolds.
\newblock {\em The Journal of Geometric Analysis}, 26(2):750--781, 2016.

\bibitem[Kre10]{kreyssig2010introduction}
Peter Kreyssig.
\newblock An introduction to harmonic manifolds and the lichnerowicz
  conjecture, 2010.
\newblock arXiv: 1007.0477.

\bibitem[Lic44]{Lich}
Andr\'{e} Lichnerowicz.
\newblock Sur les espaces riemanniens compl\`etement harmoniques.
\newblock {\em Bull. Soc. Math. France}, 72:146--168, 1944.

\bibitem[LR82]{resolvent}
No\"{e}l Lohou\'{e} and Thomas Rychener.
\newblock Die {R}esolvente von {$\Delta $} auf symmetrischen {R}\"{a}umen vom
  nichtkompakten {T}yp.
\newblock {\em Comment. Math. Helv.}, 57(3):445--468, 1982.

\bibitem[Nik05]{Nik}
Yuri Nikolayevsky.
\newblock Two theorems on harmonic manifolds.
\newblock {\em Commentarii Mathematici Helvetici - COMMENT MATH HELV},
  80:29--50, 01 2005.

\bibitem[PS14]{PRAMANIK20142867}
Malabika Pramanik and Rudra~P. Sarkar.
\newblock Chaotic dynamics of the heat semigroup on {R}iemannian symmetric
  spaces.
\newblock {\em J. Funct. Anal.}, 266(5):2867--2909, 2014.

\bibitem[PS15]{PS15}
Norbert Peyerimhoff and Evangelia Samiou.
\newblock Integral geometric properties of non-compact harmonic spaces.
\newblock {\em The Journal of Geometric Analysis}, 25(1):122--148, Jan 2015.

\bibitem[RS03]{BusemannHarmonic}
Akhil Ranjan and Hemangi Shah.
\newblock Busemann functions in a harmonic manifold.
\newblock {\em Geometriae Dedicata}, 101(1):167--183, 2003.

\bibitem[RS09]{fourierNA2}
Swagato~K. Ray and Rudra~P. Sarkar.
\newblock Fourier and radon transform on harmonic na groups.
\newblock {\em Transactions of the American Mathematical Society},
  361(8):4269--4297, 2009.

\bibitem[RS18]{ray2018chaotic}
Swagato~K. Ray and Rudra~P. Sarkar.
\newblock Chaotic behaviour of the fourier multipliers on riemannian symmetric
  spaces of noncompact type, 2018.
\newblock arXiv: 1805.10048.

\bibitem[Sar13]{Sarkar_2013}
Rudra~P. Sarkar.
\newblock Chaotic dynamics of the heat semigroup on the {D}amek-{R}icci spaces.
\newblock {\em Israel J. Math.}, 198(1):487--508, 2013.

\bibitem[Sza90]{szabo1990}
Z.~I. Szab\'{o}.
\newblock The {L}ichnerowicz conjecture on harmonic manifolds.
\newblock {\em J. Differential Geom.}, 31(1):1--28, 1990.

\bibitem[Tay89]{10.1215/S0012-7094-89-05836-5}
Michael~E. Taylor.
\newblock {$L^p$}-estimates on functions of the {L}aplace operator.
\newblock {\em Duke Math. J.}, 58(3):773--793, 1989.

\bibitem[Wal49]{Walker}
A.~G. Walker.
\newblock On {L}ichnerowicz's conjecture for harmonic 4-spaces.
\newblock {\em J. London Math. Soc.}, 24:21--28, 1949.

\end{thebibliography}
\bibliographystyle{alpha}

\end{document}